\documentclass[reqno,12pt]{amsart}
\usepackage{amsmath,amsthm,amssymb,amsfonts,epsfig,color,graphicx,enumerate,enumitem,accents,subfigure}
\usepackage[a4paper,margin=3truecm]{geometry}
\usepackage[T1]{fontenc}

\DeclareMathOperator{\spt}{spt}

\def\med{\medskip\noindent}
\def\mutild{\tilde{\mu}}
\def\rhotild{\tilde{\rho}}
\def\a{\alpha}

\def\f{\varphi}

\def\ds{\displaystyle}
\def\eps{{\varepsilon}}
\def\N{\mathbb{N}}
\def\R{\mathbb{R}}

\def\O{\Omega}
\def\A{\mathcal{A}}
\def\B{\mathcal{B}}
\def\C{\mathcal{C}}

\def\M{\mathcal{M}}
\def\PP{\mathcal{P}}

\def\ctild{\tilde{c}}
\def\Ctild{\tilde{C}}
\def\Ptild{\tilde{P}}
\def\rhotild{\tilde{\rho}}

\def\Cbar{\overline{C}}
\newcommand{\be}{\begin{equation}}
\newcommand{\ee}{\end{equation}}
\newcommand{\bib}[4]{\bibitem{#1}{\sc#2: }{\it#3. }{#4.}}

\newcommand{\weak}{\stackrel{*}{\rightharpoonup}}
\newcommand{\norm}[1]{\lvert #1 \rvert}
\newcommand{\Norm}[1]{\lVert #1 \rVert}
\newcommand{\res}{\mathbin{\vrule height 1.6ex depth 0pt width 0.13ex\vrule height 0.13ex depth 0pt width 1.3ex}}

\numberwithin{equation}{section}
\theoremstyle{plain}
\newtheorem{theo}{Theorem}[section]
\newtheorem{lemm}[theo]{Lemma}
\newtheorem{coro}[theo]{Corollary}
\newtheorem{prop}[theo]{Proposition}

\theoremstyle{remark}
\newtheorem{rema}[theo]{\bf Remark}
\newtheorem{exam}[theo]{Example}

\title[Relaxed multi-marginal costs and quantization effects]{Relaxed multi-marginal costs and quantization effects}

\author{Guy Bouchitt\'e, Giuseppe Buttazzo, Thierry Champion, Luigi De Pascale}

\date{\today}

\begin{document}

\begin{abstract} We propose a duality theory for multi-marginal repulsive cost that appear in optimal transport problems arising in Density Functional Theory. The related optimization problems involve probabilities on the entire space and, as minimizing sequences may lose mass at infinity, it is natural to expect relaxed solutions which are sub-probabilities. We first characterize the $N$-marginals relaxed cost in terms of a stratification formula which takes into account all $k$ interactions with $k\le N$. We then develop a duality framework involving
continuous functions vanishing at infinity and deduce primal-dual necessary and sufficient optimality conditions   Next we prove the existence and the regularity of an optimal dual potential under very mild assumptions. In the last part of the paper, we apply our results to a minimization problem involving a given continuous potential and we give evidence of a mass quantization effect for optimal solutions.
\end{abstract}

\maketitle

\textbf{Keywords: }Multi-marginal optimal transport, Duality and Relaxation, Coulomb cost, Quantization of minimizers

\textbf{2010 Mathematics Subject Classification:} 49J45, 49N15, 49K30, 35Q40

\section{Introduction}\label{sintro}

An interesting issue in {\it Density Functional Theory} (DFT), an important branch of Quantum Chemistry, is to understand the asymptotic behavior as $\eps\to0$ of the infimum problem
\be\label{mineps}
\min\left\{\eps T(\rho)+C(\rho)-U(\rho)\ :\ \rho\in\PP\right\}
\ee
where the parameter $\eps$ stands for the Planck constant and
\begin{itemize}
\item$T(\rho)$ is the kinetic energy
$$T(\rho)=\frac{1}{2}\int_{\R^d}|\nabla\sqrt\rho|^2\,dx;$$
\item$C(\rho)$ describes the electron-electron interaction;
\item$U(\rho)$ is the potential term
$$U(\rho)=\int_{\R^d}V(x)\rho\,dx;$$
\item$\PP$ is the class of all probabilities over $\R^d$.
\end{itemize}
The term $C(\rho)$ is the one on which we focus our attention. Here we want to stress that the ambient space is the whole $\R^d$ ($d=3$ in the physical applications); for simplicity integrals over $\R^d$ are often denoted without the indication of the domain of integration, and similarly for spaces of functions or measures defined over all $\R^d$ we do not indicate the domain. Starting from the works \cite{bdpgg12} and \cite{cfk13} the link between optimal transportation problems \cite{vi03}  and DFT for Coulomb systems \cite{lieb83} has been investigated and in particular the term $C(\rho)$ has been considered as a multi-marginal transport cost:
$$C(\rho)=\inf\left\{\int_{\R^{Nd}}c(x_1\,\dots,x_N)\,dP\ :\ P\in\Pi(\rho)\right\}$$
where
$$c(x_1\,\dots,x_N)=\sum_{1\le i<j\le N}\frac{1}{|x_i-x_j|}$$
and $\Pi(\rho)$ is the family of multi-marginal transport plans
$$\Pi(\rho)=\left\{P\in\PP(\R^{Nd})\ :\ \pi_i^\#P=\rho\hbox{ for all }i=1,\dots,N\right\}$$
being $\pi_i$ the projections from $\R^{Nd}$ on the $i$-th factor $\R^d$ and $\pi_i^\#$ the push-forward operator
$$\pi_i^\#P(E)=P\big(\pi_i^{-1}(E)\big)\qquad\hbox{for all Borel sets }E\subset\R^d.$$
The relationship of the transport cost $C(\rho)$ with the DFT is mainly related to the fact that it is the semiclassical limit of the Levy-Lieb energy as shown in 
\cite{bdp17,cfk13,cfk17,lew17}.

If $U$ is associated with a Coulomb potential $V:\R^d\to[0,+\infty]$ of the kind
$$V(x)=\sum_{k=1}^M\frac{Z_k}{|x-X_k|}\qquad Z_k>0,\ X_k\in\R^d,$$
where for $k=1,\dots,M$ the $X_k$ are the positions of the nuclei and $Z_k$ the corresponding charges, it turns out that the infimum in \eqref{mineps} blows up to $-\infty$ as $\eps^{-1}$. Studying the rescaled problem as $\eps\to0$ is a quite difficult issue due to the presence of the strong interaction term $C(\rho)$. In particular it is not clear under which conditions on the potential $V$ the problem \eqref{mineps} admits a solution and what is the limiting problem as $\eps\to 0$ in the sense of $\Gamma$-convergence. A partial answer is known when the electronic energy $C(\rho)$ involves two electrons only (see \cite{bbcd18}). Some theoretical and numerical results on the same problem are also contained in \cite{cfm14}.

In contrast, if instead of the Coulomb potential we choose a continuous potential $V$ vanishing at infinity, then the infimum in \eqref{mineps} remains bounded as $\eps\to 0$. It is important to notice that limits of minimizing sequences $(\rho_\eps)$ are not in general probabilities since some mass can be lost at infinity (what we call {\it ionization} phenomenon \cite{fnvdb18, fnvdb18b, solo03, solo91}). It turns out by an elementary $\Gamma$-convergence argument that, in this case,
the weak* limits of $(\rho_\eps)$ can be characterized as solutions of the limit problem
 \be\label{minrelax0}
\min\left\{\Cbar (\rho)- \int V\,d\rho\ :\ \rho\in\PP^-\right\}
\ee
where $\Cbar(\rho)$ is the relaxation of the functional $C(\rho)$ defined by:
$$\Cbar(\rho)=\inf\left\{\liminf_n C(\rho_n)\ :\ \rho_n\weak\rho,\ \rho_n\in\PP\right\}.$$
Notice that $\Cbar(\rho)$ is defined for all $\rho$ belonging to the class of sub-probabilities
$$\PP^-=\left\{\rho\hbox{ nonnegative Borel measure on }\R^d\ :\ \|\rho\|\le1\right\}$$
where by $\|\rho\|$ we simply denoted the mass of $\rho$
$$\|\rho\|=\int d\rho.$$

The minimization problem \eqref{minrelax0} is convex and our first goal is to develop a duality theory for the optimal transport problem related to the cost functional $\Cbar(\rho)$. This is achieved by considering the compactification of $\R^d$ through the addition of a point $\omega$ at infinity and the related dual space $C_0\oplus\R$. The duality formula is illustrated in Theorem \ref{duality}:
\be\label{introdual}
\Cbar(\rho)=\sup\left\{\int\psi\,d\rho+(1-\|\rho\|)\psi_\infty\ :\ \psi\in\A\right\}
\ee
where $\A$ is the class of admissible functions, defined as
$$\A=\left\{\psi\in C_0\oplus\R\ :\ \frac1N\sum_{i=1}^N\psi(x_i)\le c(x_1,\dots,x_N)\quad \forall x_i\in(\R^d)^N\right\},$$
and $\psi_\infty$ denotes the limit of $\psi$ at infinity.
Furthermore we prove for a large class of $\rho$ the existence of an optimal Lipschitz continuous potential $\psi$ for \eqref{introdual}. 
We are also able to represent the relaxed cost functional $\Cbar$ through a {\it stratification} formula:
\be\label{strat}
\Cbar(\rho)=\inf\left\{\sum_{k=1}^N\C_k(\rho_k)\ :\ \rho_k\in\PP^-,\ \sum_{k=1}^N\frac{k}{N}\rho_k=\rho,\ \sum_{k=1}^N\|\rho_k\|\le1\right\}
\ee
which makes use of all {\it partial interaction functionals} $\C_k, 1\le k\le N$ as defined in \eqref{Cextend}. We recently become aware that some results on the same spirit but in the framework of Grand Canonical Optimal Transportation are currently being obtained by Di Marino, Lewin and Nenna \cite{dmnele19}.

As an application of our duality theory we analyze the optimization problem \eqref{minrelax0} with a potential $V(x)$ belonging to $C_0$. Even in this simplified case, when no kinetic energy is present, we discover a very rich and surprising structure in which, depending on the choice of $V$, we obtain optimal solutions which are either probabilities or sub-probabilities with fractional mass $\frac{k}{N}$ with $k$ integer (phenomenon that we may interpret as a {\it mass quantization effect}).

Most of the results presented in this paper could be extended with little effort to the more general case of
an interaction cost of the form
$$c(x_1\,\dots,x_N)=\sum_{1\le i<j\le N}\ell(|x_i-x_j|)\ ,$$
where the function $\ell: \R^+ \to (0,+\infty]$ is lower semicontinuous and vanishes at infinity.
Some more technical requirements on the function $\ell$ are needed to extend the existence and Lipschitz regularity result of Section \ref{dualexist}.

\smallskip

The structure of the paper is as follow. In Section \ref{srelax}, we first identify the relaxed functional $\Cbar(\rho)$ on $\PP^-$ in an abstract compact framework (Proposition \ref{prop:ctilde}) and then establish the representation formula \eqref{strat}. In Section \ref{dualitySec}, we develop a complete duality framework extending previous works \cite{bcdp17,dep15,kel84} to the case of sub-probabilities and allowing practical computations in case of finitely supported measures (Proposition \ref{rhopp}). In addition we derive some very useful primal-dual necessary and sufficient optimality conditions (Theorem \ref{prop:cns}). The long Section \ref{dualexist} is devoted to the existence and Lipschitz regularity of an optimal dual potential for sub-probabilities. In Section \ref{relaxedmass}, we apply all previous results to the relaxed problem \eqref{minrelax0} and enlighten the mass quantization effect.

\bigskip
Let us list a few notations that will be used constantly along the paper. 
\begin{itemize}
\item[--]$C_b$ is the space of continuous and bounded functions in $\R^d$  equipped with the $\sup$-norm;  
\item[--] $C_0$ is the separable Banach subspace of $C_b$ consisting of those functions vanishing at $\infty$ and $C_0^+$ is subclass of nonnegative elements of $C_0$;
\item[--] $Lip $ is the space of Lipschitz continuous functions on $\R^d$ and for an element $\varphi \in Lip$ the Lipschitz semi-norm will be denoted by $Lip(\varphi)$. $Lip_k$ is the subset of Lipschitz functions with Lipschitz constant equal to $k$;
\item[--] $\psi_+$ denotes the positive part of a function $\psi$, i.e. $\max\{\psi, 0\}$; 
\item[--] $\PP(E)$ is the set of Borel probability measures on the metric space $E$; 
\item[--] $\PP^-(E)$ is the set of Borel sub-probability measures  on the metric space $E$, i.e. the set of positive measures $\rho$ of total variation $\|\rho\|\leq 1$. $\PP^-$ will be  equipped with the weak* convergence. With this convergence $\PP^-$ is a compact metrizable space. 
\end{itemize}
\section{The relaxed multi-marginal cost}\label{srelax}

A crucial step for the proof of the existence of an optimal $\rho\in\PP$ for the minimization problem \eqref{minrelax0} is the study of the relaxed cost
\[
\Cbar(\rho)=\inf\left\{\liminf_n C(\rho_n)\ :\ \rho_n\weak\rho,\ \rho_n\in\PP\right\}
\]
of the electron-electron interaction functional $C(\rho)$. Note that, while $C(\rho)$ is defined on probabilities $\rho\in\PP$, since the weak* convergence may allow loss of mass at infinity, the relaxed cost $\overline C(\rho)$ is defined for $\rho\in\PP^-$. The complete characterization of $C(\rho)$ is obtained in Subsection \ref{stratmain}. In a first step we derive $\Cbar(\rho)$ in a abstract way by embedding $\R^d$ into its Alexandroff compactification.

\subsection{A compact framework for the relaxed cost}

It is  convenient to study this cost in a compact framework, by embedding the elements $\rho\in\PP^-$ as probabilities in a compact space. To this end, we introduce a point $\omega$ at infinity, and we denote by $X=\R^d\cup\{\omega\}$ the compact set resulting from Alexandrov's construction.
\def\injRd{{S}}
We also denote $\injRd:x \mapsto x$ the identity embedding of $\R^d$ into $X$, and consider the transformed Coulomb cost $\ctild$ on $X^N$ given by
\be\label{ctil}
\ctild(x_1,\ldots,x_N) = \sum_{1 \leq i < j \leq N} \frac{1}{\norm{x_i-x_j}}
\ee
where we set $1/\norm{a-b} = 0$ whenever $a$ or $b$ equals $\omega$.
Note that this convention yields that $\ctild$ is lower semi-continuous on $X$.
We can now define the transport cost $\Ctild$ for any $\rhotild\in\PP(X)$ by
\[
\Ctild(\rhotild) := \min\left\{\int_{X^N}\ctild\,d\Ptild\ :\ \Ptild\in\PP(X^N),\ \Ptild\in\Pi(\rhotild) \right\}\,.
\]
Note that $\Ctild$ is lower semi-continuous on $\PP(X)$ endowed with the weak topology, i.e. the topology of narrow convergence for measures on the compact set $X$. The relation between $C$ and $\Ctild$ is as follows: for $\rho\in\PP$ we have that $\rhotild:=\injRd^\#\rho$ belongs to $\PP(X)$, and
$$P\in\Pi(\rho)\ \Longleftrightarrow\ \Ptild:=\big(\injRd^{\otimes N}\big)^\# P\in\Pi(\rhotild)$$
where we use the notation $f^{\otimes N}=f\otimes\dots\otimes f$ ($N$ times). With these notations we have
$$\int_{(\R^d)N} c\,dP=\int_{X^N}\ctild\,d\Ptild.$$
As a consequence $C(\rho)=\Ctild\big(\injRd^\#\rho\big)$ whenever $\rho\in\PP$. The following result now relates $\Cbar$ and $\Ctild$.

\begin{prop}\label{prop:ctilde}
For every $\rho\in\PP^-$ it holds
\be\label{eq:ctilde}
\Cbar(\rho)=\Ctild(\rhotild)\qquad\text{for}\quad\rhotild:=S^\#\rho+(1-\|\rho\|)\delta_\omega.
\ee
\end{prop}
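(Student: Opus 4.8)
The plan is to establish the two inequalities $\Cbar(\rho)\le\Ctild(\rhotild)$ and $\Cbar(\rho)\ge\Ctild(\rhotild)$ separately, exploiting the lower semicontinuity of $\Ctild$ on $\PP(X)$ together with a suitable approximation of $\rho$ by probabilities on $\R^d$ that converge, after embedding in $X$, to $\rhotild$.

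For the inequality $\Cbar(\rho)\le\Ctild(\rhotild)$, I would first produce a recovery sequence $\rho_n\in\PP$ with $\rho_n\weak\rho$ in $\PP^-$ and $\limsup_n C(\rho_n)\le\Ctild(\rhotild)$. The natural construction: take an optimal plan $\Ptild\in\Pi(\rhotild)$ for $\Ctild(\rhotild)$, and transport the mass sitting near $\omega$ back into $\R^d$ by pushing $\omega$ to a point $R_n\,e$ with $R_n\to\infty$ (a fixed direction $e$), leaving the rest of $\Ptild$ unchanged on $(\R^d)^N$. More precisely, let $T_n:X\to\R^d$ be the map equal to the identity on $\R^d$ and sending $\omega\mapsto R_ne$; set $P_n:=(T_n^{\otimes N})^\#\Ptild$ and $\rho_n:=\pi_i^\#P_n$ (the same for all $i$ since $\Ptild$ is symmetric in its marginals). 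Then $\rho_n\in\PP$, $\rho_n\weak\rho$ because the mass $(1-\|\rho\|)$ escapes to infinity, and by dominated convergence — using that $c$ is continuous and bounded on the relevant region and that $1/|x_i-x_j|\to 0$ when one argument runs off to infinity while the other stays bounded — one gets $\int c\,dP_n\to\int_{X^N}\ctild\,d\Ptild=\Ctild(\rhotild)$; hence $C(\rho_n)\le\int c\,dP_n\to\Ctild(\rhotild)$, giving $\Cbar(\rho)\le\Ctild(\rhotild)$. The delicate point here is handling the interactions between two particles that are \emph{both} sent to $\omega$: these must be sent to mutually far-apart points, so one should use $N$ distinct escaping directions (or distinct rates) $R_n^{(1)}e_1,\dots,R_n^{(N)}e_N$ so that $|R_n^{(i)}e_i-R_n^{(j)}e_j|\to\infty$ as well, matching the convention $1/|\omega-\omega|=0$.

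For the reverse inequality $\Cbar(\rho)\ge\Ctild(\rhotild)$, take any sequence $\rho_n\in\PP$ with $\rho_n\weak\rho$ and $\liminf_n C(\rho_n)=\Cbar(\rho)$ (we may assume this liminf is finite). Embed: $\rhotild_n:=S^\#\rho_n\in\PP(X)$, and by compactness of $X$ we have, up to a subsequence, $\rhotild_n\to\mu$ narrowly in $\PP(X)$ for some $\mu\in\PP(X)$. Testing against functions in $C_0$ (which extend continuously to $X$ by vanishing at $\omega$) shows that $\mu\res\R^d = S^\#\rho$, and since $\mu(X)=1$ and $\rho(\R^d)=\|\rho\|$, the remaining mass $1-\|\rho\|$ sits at $\omega$; that is $\mu=\rhotild$. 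Now $C(\rho_n)=\Ctild(\rhotild_n)$ by the identity recorded just before the proposition, and lower semicontinuity of $\Ctild$ on $\PP(X)$ yields $\Ctild(\rhotild)=\Ctild(\mu)\le\liminf_n\Ctild(\rhotild_n)=\liminf_n C(\rho_n)=\Cbar(\rho)$, as desired.

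I expect the main obstacle to be the construction and verification of the recovery sequence in the first inequality — specifically, making the escaping-mass transport rigorous: one must arrange the escape so that \emph{all} the $\ctild$ contributions involving $\omega$ (both $\omega$-to-finite and $\omega$-to-$\omega$) vanish in the limit, which forces a careful choice of distinct, well-separated escape rates, and one must then justify the passage $\int c\,dP_n\to\int\ctild\,d\Ptild$, e.g. by splitting $\Ptild$ according to how many coordinates equal $\omega$ and applying dominated convergence on each stratum. The lower-semicontinuity half is comparatively routine once the identification $\mu=\rhotild$ is in hand. One may also remark that the $\min$ in the definition of $\Ctild$ is attained because $\ctild$ is lower semicontinuous on the compact set $X^N$ and $\Pi(\rhotild)$ is weakly compact, so all the objects used above are well defined.
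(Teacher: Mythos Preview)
Your overall strategy matches the paper's: lower semicontinuity of $\Ctild$ for one inequality, and an explicit recovery sequence built by sending the $\omega$-mass off to infinity for the other. The l.s.c.\ direction is fine. In the recovery direction, however, there are two genuine gaps.

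First, once you use $N$ \emph{distinct} escape maps $T_n^{(i)}$ (which you must, to kill the $\omega$--$\omega$ interactions), the push-forward $(T_n^{(1)}\otimes\cdots\otimes T_n^{(N)})^\#\Ptild$ no longer has equal marginals: the $i$-th marginal is $\rho+(1-\|\rho\|)\delta_{a_n^{(i)}}$, which depends on $i$. Hence there is no single $\rho_n$ with $P_n\in\Pi(\rho_n)$, and you cannot bound $C(\rho_n)$ by $\int c\,dP_n$. The fix is to symmetrize $P_n$ over permutations (the cost is unchanged since $c$ is symmetric); the paper does this explicitly.

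Second, your dominated-convergence argument for $\int c\,dP_n\to\int\ctild\,d\Ptild$ is not justified as stated. On the stratum where $x_i=\omega$ and $x_j\in\R^d$, the cross term is $1/|a_n^{(i)}-x_j|$, and you need $\int_{\R^d}1/|a_n^{(i)}-y|\,d\rho(y)\to0$. This fails for an arbitrary finite measure $\rho$ and an arbitrary escape sequence (think of $\rho$ with atoms marching to infinity, and $a_n^{(i)}$ hitting them). One can salvage it by choosing the escape points carefully, but the paper bypasses the issue entirely with a cleaner device: it also truncates, sending every coordinate outside $B(0,n)$ (not just $\omega$) to the escape point $2n\xi_i$. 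This yields the \emph{pointwise} inequality $c\circ h_n\le\ctild+O(1/n)$ on all of $X^N$, because any interaction involving an escape point is now between a point in $B(0,n)$ and one on the sphere of radius $2n$, or between two escape points at distance $2n|\xi_i-\xi_j|$. Integrating this pointwise bound against $\Ptild$ gives the desired $\limsup$ inequality with no appeal to dominated convergence.
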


\begin{proof}
We denote by
$$\Gamma(\rho):=\Ctild\big(S^\#\rho+(1-\|\rho\|)\delta_\omega\big)$$
the right hand side of \eqref{eq:ctilde}. From the preceding discussion we clearly have
$$\Cbar(\rho)=C(\rho)=\Gamma(\rho)\qquad\text{whenever }\rho\in\PP.$$
We first claim that $\Gamma$ is weakly* lower semicontinuous on $\PP^-$. Indeed, assume that $\rho_n \weak \rho$ in $\PP^-$, and consider the probabilities over $X$
$$\rhotild_n=S^\#\rho_n+(1-\|\rho_n\|) \delta_\omega.$$
Then the sequence $(\rhotild_n)_n$ is weakly* compact in $\PP(X)$ so that $\rhotild_n \weak \rhotild$ for some $\rhotild\in\PP(X)$. We then infer $\rhotild\res\R^d=\rho$, so that in fact $\rhotild=S^\#\rho+(1-\|\rho\|)\delta_\omega$ and
\[
\liminf_n \Gamma(\rho_n) \;=\; \liminf_n \Ctild(\rhotild_n)
\;\geq\; \Ctild(\rhotild) \;=\; \Gamma(\rho)\,.
\]
This proves the claim. Since $\Cbar$ is the largest weakly* lower semicontinuous functional on $\PP^-$ which is lower than $C$ on $\PP$, we conclude that $\Cbar \geq \Gamma$.

We now turn to the opposite inequality $\Cbar\le\Gamma$. Let $\rho\in\PP^-$, fix
$\rhotild:=S^\#\rho+(1-\|\rho\|)\delta_\omega$ and $\Ptild\in\Pi(\rhotild)$ a symmetric plan such that
\[
\Gamma(\rho) = \Ctild(\rhotild) = \int_{X^N} \ctild \,d\Ptild\,.
\]
We fix $N$ distinct vectors $\xi_1,\ldots,\xi_N$ on the unit sphere $\R^d$, and for any integer $n$ we define the Borel map $h_n:X^N\to(\R^d)^N$ by
$$h_n(x_1,\ldots,x_N)=\big(h_{n,1}(x_1),\ldots,h_{n,N}(x_N)\big)$$
where $h_{n,i}:X\to\R^d$ is given by
$$h_{n,i}(x)=\begin{cases}
x&\text{if }x\in B(0,n),\\
2n\xi_i&\text{otherwise.}
\end{cases}$$
Note that on $X^N$ it holds
\begin{equation}\label{ineqhn}
c\circ h_n\le\ctild+\frac{N(N-1)}{2\,n}\max_{i\neq j}\left\{1,\frac{1}{|\xi_i-\xi_j|}\right\}=\ctild+O\left(\frac{1}{n}\right)\,.
\end{equation}
We now define $P_n$ as the symmetrization of $(h_n)^\#\Ptild$, that is
$$P_n=\frac{1}{N!}\sum_{\sigma\in\mathcal{S}_N}\sigma^\#\big((h_n)^\#\Ptild\big)$$
We denote by $\rho_n$ the marginal of $P_n$, then $\rho_n\in\PP$ satisfies $\rho_n\res B(0,n)=\rho\res B(0,n)$. As a consequence we get $\rho_n\weak\rho$, so that from \eqref{ineqhn} we have
\begin{align*}
\Cbar(\rho)\le\liminf_n C(\rho_n)
&\le\liminf_n\int_{(\R^d)^N} c\,dP_n\\
&=\liminf_n\int_{X^N} c\circ h_n\,d\Ptild\\
&\le\int_{X^N}\ctild\,d\Ptild=\Gamma(\rho),
\end{align*}
which concludes the proof.
\end{proof}

\subsection{Stratified representation of the relaxed cost}\label{stratmain}

The formula \eqref{eq:ctilde} in Proposition \ref{prop:ctilde}  allows to recover the representation formula obtained in the case $N=2$ in \cite[Proposition 2.5]{bbcd18} and to generalize it to any value $N\ge2$. To this end, we introduce all partial correlation costs $\C_k$ involving interactions $k$ electrons interactions for $2\le k \le N$. They are defined by setting for any $\mu\in\PP^-$:
\be\label{Cextend}
\C_k(\mu):=\inf\left\{\int_{(\R^d)^N} c_k(x_1,\dots,x_k)\,dP(x_1,\dots,x_k)\ :\ \pi_i^\# P=\mu,\ \forall i=1,\dots,k\right\}
\ee
where transport plans $P$ are now non-negative Borel measures on $(R^{d})^k$ with total mass $\|P\|=\|\mu\|=\int d\mu$ and
\be\label{def:ck}
c_k(x_1,\dots,x_k) := \ctild(x_1,\dots,x_k, \omega , \dots,\omega) = \sum_{1 \le i<j\le k} \frac{1}{\norm{x_i-x_j}}\, ,
\ee
being $\ctild$ defined by \eqref{ctil}.
It is also convenient to define $\C_1$ on $\PP^-$ as $\C_1\equiv0$ (meaning that no interaction exists for a single electron). Note that our initial multi-marginal cost $C(\rho)$ agrees with $\C_N(\rho)$ for $\rho\in\PP$. 

We are now in position to state our stratification representation result:

\begin{theo}\label{prop:cpartial}
For every $\rho\in\PP^-$ it holds
\be\label{rhok}
\Cbar(\rho)=\inf\left\{\sum_{k=1}^N\,\C_k(\rho_k)\ :\ \rho_k\in\PP^-,\ \sum_{k=1}^N\frac{k}{N}\rho_k=\rho,\ \sum_{k=1}^N\|\rho_k\|\le1\right\}.
\ee
Moreover the infimum is attained whenever $\Cbar(\rho)<+\infty$.
\end{theo}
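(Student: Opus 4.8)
The plan is to build everything on Proposition~\ref{prop:ctilde}, which identifies $\Cbar(\rho)$ with the transport cost $\Ctild(\rhotild)$ on the compact space $X^N$, where $\rhotild=S^\#\rho+(1-\|\rho\|)\delta_\omega$. The structural fact I would exploit is that $X^N$ splits into the $2^N$ strata
\[
A_I=\big\{(x_1,\dots,x_N)\in X^N:\ x_i\in\R^d\iff i\in I\big\},\qquad I\subseteq\{1,\dots,N\},
\]
and that on $A_I$ the cost $\ctild$ depends only on the coordinates indexed by $I$ and, after the obvious relabeling, equals $c_{|I|}$ — in particular it vanishes on $A_I$ whenever $|I|\le1$. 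Thus every $\Ptild\in\PP(X^N)$ decomposes as $\Ptild=\sum_I Q_I$ with $Q_I:=\Ptild\res A_I$, and the whole proof is a matter of translating plans on $X^N$ into families of plans on the spaces $(\R^d)^k$ through these strata, and back.

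For the inequality ``$\ge\Cbar(\rho)$'' for the infimum in \eqref{rhok}, I would take an admissible family $(\rho_k)$ and, for each $k$, a symmetric plan $P_k$ on $(\R^d)^k$ with all marginals $\rho_k$ and $\int c_k\,dP_k$ within $\eps$ of $\C_k(\rho_k)$ (symmetrizing a near-optimal plan changes neither the marginals nor the cost). Denoting by $\iota_I:(\R^d)^k\to X^N$ the map that places its $k$ arguments in the slots indexed by $I$ and $\omega$ elsewhere, so that $\ctild\circ\iota_I=c_k$, I would set
\[
\Ptild:=\sum_{k=1}^N\frac1{\binom Nk}\sum_{|I|=k}(\iota_I)^\#P_k\ +\ m_0\,\delta_{(\omega,\dots,\omega)},\qquad m_0:=1-\sum_{k=1}^N\|\rho_k\|\ \ge0.
\]
Using the symmetry of $P_k$ and the identities $\binom{N-1}{k-1}/\binom Nk=k/N$, $\binom{N-1}{k}/\binom Nk=(N-k)/N$, each marginal $\pi_j^\#\Ptild$ equals $\sum_k\frac kN\rho_k+\big(\sum_k\frac{N-k}N\|\rho_k\|+m_0\big)\delta_\omega$, which, by admissibility of $(\rho_k)$, is exactly $\rho+(1-\|\rho\|)\delta_\omega=\rhotild$. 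Hence $\Ptild\in\Pi(\rhotild)$ with $\|\Ptild\|=1$, and $\int_{X^N}\ctild\,d\Ptild=\sum_k\int c_k\,dP_k\le\sum_k\C_k(\rho_k)+N\eps$; letting $\eps\to0$ gives $\Cbar(\rho)=\Ctild(\rhotild)\le\sum_k\C_k(\rho_k)$, and then the infimum.

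For the reverse inequality and for attainment, I would run the construction backwards starting from an optimal plan $\Ptild$ for $\Ctild(\rhotild)$ (one exists: $\ctild$ is l.s.c.\ and bounded below, $\Pi(\rhotild)$ weakly* compact). With $m_I:=\|Q_I\|$ I would set, for $1\le k\le N$,
\[
\rho_k:=\frac1k\sum_{|I|=k}\ \sum_{i\in I}\pi_i^\#Q_I
\]
(and $\rho_k:=0$ if $\sum_{|I|=k}m_I=0$). Restricting $\Ptild$ to $\{x_i\in\R^d\}$ for each $i$ gives $\sum_{I\ni i}\pi_i^\#Q_I=\rho$; summing over $i$ and rearranging yields $\sum_k k\rho_k=N\rho$, while $\|\rho_k\|=\sum_{|I|=k}m_I$ and $\sum_k\|\rho_k\|\le\sum_I m_I=1$, so $(\rho_k)$ is admissible. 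To control $\C_k(\rho_k)$ I would, for $|I|=k$, push $Q_I$ to $(\R^d)^k$ via $A_I\cong(\R^d)^k$, symmetrize it over $\mathcal S_k$, and sum over $|I|=k$; the resulting plan has all marginals $\rho_k$, mass $\|\rho_k\|$, and cost $\sum_{|I|=k}\int_{A_I}\ctild\,dQ_I$, whence $\sum_{k=1}^N\C_k(\rho_k)\le\int_{X^N}\ctild\,d\Ptild=\Cbar(\rho)$. Combined with the previous step this closes \eqref{rhok} and shows that this $(\rho_k)$ attains the infimum whenever $\Cbar(\rho)<+\infty$.

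The main obstacle I anticipate is purely the bookkeeping: carrying out the symmetrizations so that all marginals of a given stratum plan collapse onto the single measure $\rho_k$, and checking that the coordinates sent to $\omega$ — including the mass living on the all-$\omega$ stratum — recombine precisely into the atom $(1-\|\rho\|)\delta_\omega$ of $\rhotild$. Once Proposition~\ref{prop:ctilde} is in hand, no further analytical input is needed.
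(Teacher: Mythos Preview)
Your proposal is correct and follows essentially the same route as the paper: both directions rest on Proposition~\ref{prop:ctilde} and on the stratification $X^N=\bigcup_I A_I$, lifting admissible families $(\rho_k,P_k)$ to a plan in $\Pi(\rhotild)$ via the maps $\iota_I$ (plus mass at $(\omega,\dots,\omega)$), and conversely slicing an optimal $\Ptild$ along the strata to recover an admissible $(\rho_k)$. The only cosmetic difference is that the paper takes $\Ptild$ symmetric from the outset, which lets it read off $\rho_k$ from the single stratum $(\R^d)^k\times\{\omega\}^{N-k}$ via $\pi_1$, whereas you average $\pi_i^\#Q_I$ over all $i\in I$ and all $|I|=k$; under symmetry these coincide, and your averaging is what makes the argument go through without assuming it.
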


\begin{rema}
At this stage, we notice an important connection with the so-called {\it grand canonical} formulation for the infinite multi-marginal problem. Indeed, if we rewrite the sub-probabilities $\rho_k$ in the form $\rho_k=\alpha_k\nu_k$ with $\|\nu_k\|=1$ and $ 0\le\alpha_k\le1$, we obtain
\be\label{canonical}
\Cbar(\rho)=\inf\left\{\sum_{k=1}^N\alpha_k\frac{k}{N}\,C_k(\nu_k)\ :\
\nu_k\in\PP,\ \sum_{k=1}^N\alpha_k\frac{k}{N}\nu_k=\rho,\ \sum_{k=1}^N\alpha_k\le1\right\}.
\ee
In the grand canonical formulation (see for instance \cite{LLS}), the summation with respect to $k$ in \eqref{canonical} runs from $1$ to $+\infty$. Mixed formulations have been used as well by Cotar and Petrache (see \cite{cope19}) and, very recently, by Di Marino, Lewin and Nenna \cite{dmnele19}. In the present paper, we aim to emphasize the connection with the relaxation framework which is crucial for existence and nonexistence issues. 
\end{rema}

\begin{rema}\label{fractionalrho}
From Theorem \ref{prop:cns} below, we deduce that, for $1\le k\le N$, one has 
\be\label{easy}
\Cbar(\rho)\le\C_k\Big(\frac{N\rho}{k}\Big)\qquad\text{whenever }\|\rho\|=\frac{k}{N}
\ee
This is a consequence of \eqref{rhok} when taking $\rho_k=N\rho/k$ and $\rho_j=0$ if $j\not=k$. We conjecture that the inequality in \eqref{easy} is in fact an equality (this would enlighten the fact that configurations involving an integer number of electrons play a special role). 
\end{rema}

\begin{proof}[Proof of Theorem \ref{prop:cpartial}]
Fix $\rho\in\PP^-$ and consider the associated problem
$$(Q_\rho)\qquad\inf\left\{\sum_{k=1}^N\,\C_k(\rho_k)\ :\ \rho_k\in\PP^-,\ \sum_{k=1}^N \frac{k}{N}\rho_k=\rho,\ \sum_{k=1}^N\|\rho_k\|\le1\right\}.$$
We first claim that $\Cbar(\rho) \geq \inf(Q_\rho)$, and assume without loss of generality that $\Cbar(\rho)<+\infty$.
Let $\Ptild \in\PP(X^N)$ be an optimal symmetric plan for $\Ctild(\rhotild)=\Cbar(\rho)$ in the right hand side of \eqref{eq:ctilde}. We set
$$\mutild_k:=\pi_1^\#\left(\Ptild\res(\R^d)^k\times\{\omega\}^{N-k}\right)$$
for any $k$ in $\{1,\ldots,N\}$, with the convention $(\R^d)^N\times\{\omega\}^0=(\R^d)^N$.
By the symmetry of $\Ptild$, we have
\[
\pi_1^\#\left(\Ptild\res\big(\R^d\times X^{N-1}\big)\right)=
\pi_1^\#\left(\Ptild\res\big(\R^d\times(\R^d\cup\left\{\omega\right\})^{N-1}\big)\right)=\sum_{k=1}^N\binom{N-1}{k-1}\mutild_k\,.
\]
Since $\pi_1^\#\Ptild=\rhotild=S^\#\rho+(1-\|\rho\|)\delta_\omega$, we then infer
$$\rho=\sum_{k=1}^N \binom{N-1}{k-1}{\mutild_k}\res\R^d=\sum_{k=1}^N\frac{k}{N}\nu_k$$
where we have set
$$\nu_k:=\binom{N}{k} {\mutild_k}\res\R^d$$
for all $k$. By the symmetry of $\Ptild$, we also have
$$1=\int_{(\R^d\cup\{\omega\})^N}dP\ge\sum_{k=1}^N\binom{N}{k}\int d\mutild_k=\sum_{k=1}^N\|\nu_k\|\,.$$
As a consequence, the measures $\nu_k$ satisfy the constraints of $(Q_\rho)$. Using the symmetry of $\ctild$ and $\Ptild$ and the definition of $c_k$ in \eqref{def:ck}, we obtain
$$\Cbar(\rho)=\sum_{k=2}^N \binom{N}{k} \int_{(\R^d)^k\times\{\omega\}^{N-k}}\ctild\,d\Ptild=\sum_{k=2}^N \int_{(\R^d)^k} c_k\,dP_k$$
where for each $k\ge2$, we indicate by $P_k$ the Borel sub-probability on $(R^d)^k$ 
$$P_k:=\binom{N}{k}{\pi_{1,\ldots,k}}^\#\Ptild$$
being $\pi_{1,\ldots,k}:(\R^d)^N\to(\R^d)^k$ the projection on the $k$ first copies of $\R^d$.
Then for any $k$ the transport plan $P_k$ has marginals $\nu_k$ so that
\[
\Cbar(\rho)=\sum_{k=2}^N \int_{(\R^d)^k} c_k\,dP_k\ge\sum_{k=2}^N \C_k(\nu_k) \geq \inf(Q_\rho)
\]
which proves the claim. Note that, under the hypothesis $\Cbar(\rho)<+\infty$, the equality $\Cbar(\rho)=\inf(Q_\rho)$ would directly yield that the family $\nu_2,\ldots,\nu_N$ is a solution of $(Q_\rho)$.

We now prove the reverse inequality $\Cbar(\rho)\le\inf(Q_\rho)$, and assume without loss of generality that $\inf(Q_\rho)<+\infty$. We consider $\rho_1,\ldots,\rho_N$ admissible for $(Q_\rho)$ such that $\sum_{k=1}^N \C_k(\rho_k)<+\infty$. For each $k\ge2$ take $P_k\in\PP^-\big((\R^d)^k\big)$ symmetric and optimal for $\C_k(\rho_k)$, we also set $P_1 = \rho_1$ and define for $k \geq 1$ the plans
\[
\Ptild_k:=\left(\left(\injRd^{\otimes k}\right)^\# P_k \right)\otimes \overbrace{\delta_{\omega}\otimes\ldots\otimes\delta_{\omega}}^{N-k \; times}\,.
\]
We now symmetrize the plans $\Ptild_k$ in the following way : for any $k \in \{1,\ldots,N\}$ we define
$$Sym(\Ptild_k):=\binom{N}{k}^{-1}\sum_{I\subset\{1,\dots,N\},\ |I|=k}(\sigma_I)^\#\Ptild_k$$
where $\sigma_I$ is the permutation of $\{1,\ldots,N\}$ which is increasing on $\{1,\ldots,k\}$ with image $I$ and increasing on $\{k+1,\ldots,N\}$. By convention if $x \in X^N$ we set $\sigma_I(x) = (x_{\sigma(1)},\ldots,x_{\sigma(N)})$.
Finally we define
$$\Ptild^*:=\sum_{k=1}^N Sym(\Ptild_k)$$
and we note that $\Ptild^*$ is a sub-probability on $X^N$ since
$$\int_{X^N}\,d\Ptild^*=\sum_{k=1}^N\int_{X^N}\,d\Ptild_k=\sum_{k=1}^N\Norm{P_k}=\sum_{k=1}^N \Norm{\rho_k}\le1$$
where the last inequality follows from the constraint in $(Q_\rho)$. We can then define on $X^N$ the probability
\[
\Ptild = \Ptild^* + (1-\Norm{\Ptild^*})\delta_{\omega} \otimes \ldots \otimes \delta_{\omega}\,.
\]
We now compute the first marginal $\rhotild={\pi_1}^\#\Ptild$: since it is a probability over $X$ it is sufficient to consider its restriction to $\R^d$, which gives
\[
\rhotild\res\R^d
=\sum_{k=1}^N\binom{N}{k}^{-1}\!\!\!\!\!\!\sum_{I\subset\{1,\dots,N\},\ |I|=k}{\pi_1}^\#\left((\sigma_I)^\#\Ptild_k \right)\res \R^d
= \sum_{k=1}^N \binom{N}{k}^{-1} \binom{N-1}{k-1} \rho_k=\rho
\] 
where we used the fact that
$${\pi_1}^\#\left((\sigma_I)^\#\Ptild_k\right)\res\R^d=0\quad\text{whenever}\quad1\notin I.$$
As a consequence $\rhotild=\injRd^\#\rho+(1-\Norm{\rho})\delta_\omega$. We now infer from \eqref{eq:ctilde} that
$$\Cbar(\rho)=\Ctild(\rhotild)\le\int_{X^N} \ctild d\Ptild
=\sum_{k=1}^N\int_{X^N}\ctild\,d\Ptild_k=\sum_{k=1}^N\C_k(\rho_k)$$
which concludes the proof.
\end{proof}

We conclude this Section by a monotonicity formula for the partial interaction costs $\C_k$.

\begin{prop}
Let $\mu\in\PP^-$, then it holds
$$\forall k\ge l,\qquad\C_k(\mu)\ge\frac{k(k-1)}{l(l-1)}\,\C_l(\mu).$$
In particular, one has
$$\forall k\ge1,\qquad\C_{k+1}(\mu)\ge\frac{k+1}{k-1}\,\C_k(\mu).$$
\end{prop}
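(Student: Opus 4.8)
The plan is to reduce the inequality to a comparison of the single ``pair cost'' $1/|x_1-x_2|$ evaluated on a transport plan and on its projection. First dispose of the trivial cases: if $\C_k(\mu)=+\infty$ there is nothing to prove, and the statement is meant for $2\le l\le k$ (for $l=1$ the right-hand side is $0$ since $\C_1\equiv0$, with the convention $\frac{k(k-1)}{0}\cdot 0=0$, so the bound $\C_k(\mu)\ge0$ holds trivially). So assume $2\le l\le k$ and $\C_k(\mu)<+\infty$. Fix $\eps>0$ and choose an admissible plan $P$ for $\C_k(\mu)$ in \eqref{Cextend}, i.e. a nonnegative Borel measure on $(\R^d)^k$ with $\pi_i^\#P=\mu$ for all $i=1,\dots,k$, such that $\int_{(\R^d)^k}c_k\,dP\le\C_k(\mu)+\eps$. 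Replacing $P$ by its symmetrization $\frac{1}{k!}\sum_{\sigma\in\mathcal{S}_k}\sigma^\#P$ changes neither the marginals (all still equal to $\mu$, since each marginal of $\sigma^\#P$ is a marginal of $P$) nor the value $\int c_k\,dP$ (because $c_k$ is a symmetric function of its arguments), so we may assume $P$ symmetric. By symmetry each of the $\binom{k}{2}$ summands of $c_k$ has the same $P$-integral, hence
\[
\int_{(\R^d)^k} c_k\,dP=\binom{k}{2}\int_{(\R^d)^k}\frac{1}{|x_1-x_2|}\,dP\,.
\]

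Next, push $P$ forward under the projection $\pi_{1,\dots,l}:(\R^d)^k\to(\R^d)^l$ onto the first $l$ copies, obtaining $P_l:=(\pi_{1,\dots,l})^\#P$. This is a nonnegative Borel measure on $(\R^d)^l$ of total mass $\|\mu\|$; each of its $l$ marginals equals $\mu$; and it is symmetric, since the projection intertwines permutations of $\{1,\dots,l\}$ with the permutations of $\{1,\dots,k\}$ that fix $\{l+1,\dots,k\}$, under which $P$ is invariant. Thus $P_l$ is admissible for $\C_l(\mu)$. Using the symmetry of $P_l$ and the fact that $1/|x_1-x_2|$ depends only on the first two coordinates,
\[
\C_l(\mu)\le\int_{(\R^d)^l} c_l\,dP_l=\binom{l}{2}\int_{(\R^d)^l}\frac{1}{|x_1-x_2|}\,dP_l=\binom{l}{2}\int_{(\R^d)^k}\frac{1}{|x_1-x_2|}\,dP\,.
\]
Combining the two displayed identities gives $\C_l(\mu)\le\dfrac{\binom{l}{2}}{\binom{k}{2}}\bigl(\C_k(\mu)+\eps\bigr)$, and letting $\eps\to0$, together with $\binom{k}{2}\big/\binom{l}{2}=\dfrac{k(k-1)}{l(l-1)}$, yields $\C_k(\mu)\ge\dfrac{k(k-1)}{l(l-1)}\C_l(\mu)$. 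The ``in particular'' statement is the case $l=k-1$.

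There is essentially no hard step here: the only points needing a line of justification are that symmetrization preserves both admissibility and the cost, and that the marginals of the projected plan are again $\mu$ — both immediate. If one prefers a genuine minimizer to $\eps$-minimizers, note that the admissible set for $\C_k(\mu)$ is tight (all marginals equal the fixed finite measure $\mu$), hence narrowly compact, while $P\mapsto\int c_k\,dP$ is lower semicontinuous, so the infimum in \eqref{Cextend} is attained and one may run the argument with $\eps=0$.
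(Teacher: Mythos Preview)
Your proof is correct and follows essentially the same route as the paper's: project an optimal (or $\eps$-optimal) symmetric $k$-plan onto the first $l$ coordinates to obtain an admissible $l$-plan, and use symmetry to express both $\int c_k\,dP$ and $\int c_l\,dP_l$ as $\binom{k}{2}$ and $\binom{l}{2}$ times the common pair integral $\int |x_1-x_2|^{-1}\,dP$. The paper works directly with a genuine minimizer (which you also justify at the end), and skips the explicit symmetrization step by taking $P_k$ symmetric from the outset; otherwise the arguments are identical.
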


\begin{proof}
Without loss of generality we assume $\C_k(\mu)<+\infty$, and we denote by $P_k$ a symmetric measure in $\Pi_k(\mu)$ such that
$$\C_k(\mu)=\int c_k\,dP_{k}.$$
We define the measures $P_2:=\pi_{1,2}^\# P_k$ and $P_l:=\pi_{1,\ldots,l}^\# P_k$ to be the push-forward of $P_k$ respectively by the projection on the $2$ and $l$ first spaces $\R^d$. We note that these two measures have $\mu$ as marginals and that
$$\pi_{1,2}^\# P_l=\pi_{1,2}^\# P_k=P_2.$$
From the symmetry of $P_k$ and $P_l$ we can compute
$$\C_k(\mu)=\int c_k\,dP_k=\binom{k}{2}\int c_2\,dP_2
\quad\text{and}\quad
\binom{l}{2}\int c_2\,dP_2=\int c_l\,dP_l\ge\C_l(\mu)$$
from which the inequality follows.
\end{proof}

\section{Dual formulation of the relaxed cost}\label{dualitySec}

This Section is devoted to a duality formula for $\Cbar(\rho)$. We consider the separable Banach space $C_0\oplus\R$ consisting of all continuous functions $\psi:\R^d\to\R$ with a constant value at infinity, that is of the form $\psi=\f+\kappa$ with $\f\in C_0$ and $\kappa\in\R$. Then we consider the closed convex subset
\begin{equation}\label{defA}
\A=\left\{\psi\in C_0\oplus\R\ :\ \frac1N\sum_{i=1}^N \psi(x_i) \le c(x)\quad \forall x \in (\R^d)^N \right\}
\end{equation}
It is convenient to introduce also a larger convex set namely
\be\label{defB}
\B=\left\{\psi\in \mathcal{S}\ :\ \frac1N\sum_{i=1}^N \psi(x_i) \le c(x)\quad \forall x \in(\R^d)^N\right\}
\ee
where $\mathcal{S}$ denotes the set of lower semicontinuous functions $\psi:\R^d\to\R$ such that $\inf \psi>-\infty$. To any such a function $\psi$, we associate the real numbers
$$\psi_\infty:=\liminf_{|x|\to+\infty}\psi(x)=\lim_{R\to+\infty}\Big(\inf\{\psi(x)\ :\ |x|\ge R\}\Big).$$
$$\psi^\infty:=\limsup_{|x|\to+\infty}\psi(x)=\lim_{R\to+\infty}\Big(\sup\{\psi(x)\ :\ |x|\ge R\}\Big).$$
By induction on the integer $N$, it is easy to check that 
$$\psi^\infty\le0\qquad\text{for every }\psi\in\B.$$
A particular choice of such a function in $\B$ is provided in Example \ref{exam1} hereafter. We will use the following truncation lemma.

\begin{lemm} \label{troncation}
Let $\psi$ belongs to $\A$ (resp. to $\B$) and let $\lambda\le \psi^\infty$. Then the function $\psi_\lambda:= \max\{\psi, \lambda\}$ also belongs to $\A$ (resp. to $\B$).
\end{lemm}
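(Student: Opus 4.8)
The plan is to verify that $\psi_\lambda := \max\{\psi,\lambda\}$ inherits both the structural membership in $C_0\oplus\R$ (resp.\ in $\mathcal S$) and the pointwise inequality $\frac1N\sum_i\psi_\lambda(x_i)\le c(x)$, under the hypothesis $\lambda\le\psi^\infty$. I would split the argument according to the two cases ($\psi\in\A$ or $\psi\in\B$), although the second is genuinely the more delicate one.

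First I would check that $\psi_\lambda$ belongs to the right function class. If $\psi\in\A$, write $\psi=\varphi+\kappa$ with $\varphi\in C_0$ and $\kappa\in\R$. Since $\psi^\infty$ equals $\kappa$ in this case (because $\varphi$ vanishes at infinity) and $\lambda\le\psi^\infty=\kappa$, the function $\max\{\psi,\lambda\}-\kappa=\max\{\varphi,\lambda-\kappa\}$ coincides with $\varphi$ outside a compact set (where $|\varphi|$ is small and $\lambda-\kappa\le 0$), hence lies in $C_0$; therefore $\psi_\lambda=\max\{\varphi,\lambda-\kappa\}+\kappa\in C_0\oplus\R$. If instead $\psi\in\B$, then $\psi$ is lower semicontinuous and bounded below, and $\psi_\lambda=\max\{\psi,\lambda\}$ is again lower semicontinuous (a finite max of lsc functions) with $\inf\psi_\lambda\ge\max\{\inf\psi,\lambda\}>-\infty$, so $\psi_\lambda\in\mathcal S$.

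Next I would establish the constraint $\frac1N\sum_{i=1}^N\psi_\lambda(x_i)\le c(x)$ for every $x=(x_1,\dots,x_N)\in(\R^d)^N$. Fix such an $x$ and let $J=\{i:\psi(x_i)<\lambda\}$ be the set of indices where truncation is active; if $J=\emptyset$ there is nothing to prove, so assume $j:=|J|\ge1$. The key geometric idea is to push each coordinate $x_i$ with $i\in J$ out to infinity along a fixed direction: pick $N$ distinct unit vectors $\xi_1,\dots,\xi_N$ (as in the proof of Proposition~\ref{prop:ctilde}) and consider, for $R$ large, the point $x^R$ with $x^R_i=x_i$ for $i\notin J$ and $x^R_i=R\xi_i$ for $i\in J$. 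Applying the admissibility of $\psi$ at $x^R$ gives $\frac1N\sum_{i\notin J}\psi(x_i)+\frac1N\sum_{i\in J}\psi(R\xi_i)\le c(x^R)$. As $R\to+\infty$ the interaction terms in $c(x^R)$ involving a coordinate in $J$ tend to $0$, so $c(x^R)\to c_{J^c}(x):=\sum_{i<i',\ i,i'\notin J}\frac1{|x_i-x_{i'}|}\le c(x)$; meanwhile $\liminf_{R\to\infty}\psi(R\xi_i)\ge\psi_\infty$ for a suitable choice of diverging sequence, but since we only know $\lambda\le\psi^\infty$ I would instead take along each coordinate a sequence $R^{(i)}_m\to\infty$ with $\psi(R^{(i)}_m\xi_i)\to$ some limit $\ge$ a value that can be made $\ge\lambda-\text{(small)}$ by the definition of $\psi^\infty$ as a limsup — more carefully, for each $i\in J$ and each $\eta>0$ there are arbitrarily large radii $r$ with $\psi(r\xi_i)>\psi^\infty-\eta\ge\lambda-\eta$, and by a diagonal choice one can realize this simultaneously for all $i\in J$ along a common sequence $R_m\to\infty$. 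Passing to the limit yields $\frac1N\sum_{i\notin J}\psi(x_i)+\frac1N\,j(\lambda-\eta)\le c(x)$, and letting $\eta\to0$ gives $\frac1N\sum_{i\notin J}\psi(x_i)+\frac jN\lambda\le c(x)$, which is exactly $\frac1N\sum_{i=1}^N\psi_\lambda(x_i)\le c(x)$ since $\psi_\lambda(x_i)=\psi(x_i)$ for $i\notin J$ and $\psi_\lambda(x_i)=\lambda$ for $i\in J$.

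The main obstacle is the step of simultaneously pushing several coordinates to infinity while controlling the values $\psi(r\xi_i)$ from below by $\psi^\infty$ (not merely by $\psi_\infty$); this requires using the \emph{limsup} characterization of $\psi^\infty$ and a diagonal extraction to find common diverging radii, together with the elementary fact that distinct directions keep the escaping points mutually separated so that the relevant Coulomb interactions vanish in the limit. Once this limiting argument is set up cleanly, the verification of lower semicontinuity / membership in $C_0\oplus\R$ and the bookkeeping of which interaction terms survive are routine. An alternative, slightly slicker route for the $\B$ case would be to first prove the $\A$ case and then approximate $\psi\in\B$ from below by an increasing sequence in $\A\cap C_b$ (using that $\B$-functions are suprema of such admissible continuous functions), but the direct argument above avoids relying on any such approximation result not yet established in the paper.
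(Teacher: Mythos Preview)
Your approach is essentially the paper's: push the truncated coordinates to infinity, use $\lambda\le\psi^\infty$ to bound the escaping values from below, and note that the Coulomb terms involving the escaping points vanish. The paper simply writes ``consider sequences $(y_i^n)$ with $|y_i^n|\to\infty$, $\lim_n\psi(y_i^n)\ge\lambda$, and $|y_i^n-y_j^n|\to\infty$'' and applies the constraint for $\psi$ along these sequences.

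There is, however, a genuine gap in your implementation of the escape step. You fix unit vectors $\xi_1,\dots,\xi_N$ in advance and then assert that ``for each $i\in J$ and each $\eta>0$ there are arbitrarily large radii $r$ with $\psi(r\xi_i)>\psi^\infty-\eta$''. This is false in general: $\psi^\infty=\limsup_{|x|\to\infty}\psi(x)$ is a limsup over \emph{all} escaping sequences, and along a prescribed ray $r\mapsto r\xi_i$ the values of $\psi$ may stay well below $\psi^\infty$. (Think of a $\psi\in\B$ that equals $1$ in a tube around one half-line and $0$ elsewhere at infinity: then $\psi^\infty=1$, but along any other direction the radial limit is $0$.) Your subsequent diagonal extraction cannot repair this, because the defect is already present for a single index $i$.

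The fix is exactly what the paper does: do not insist on rays. Take a single sequence $(z_n)$ with $|z_n|\to\infty$ and $\psi(z_n)\to\psi^\infty\ge\lambda$; then, for the $|J|$ truncated coordinates, pick a sparse subsequence (e.g.\ with $|z_{n+1}|>2|z_n|$) and assign distinct terms to distinct coordinates so that the mutual distances diverge. With this modification your argument goes through verbatim, and the remaining verifications (membership in $C_0\oplus\R$ or $\mathcal S$, and the cost bookkeeping) are fine. One small imprecision: when $\lambda=\kappa$ your claim that $\max\{\varphi,\lambda-\kappa\}$ ``coincides with $\varphi$ outside a compact set'' need not hold, but $\max\{\varphi,0\}\in C_0$ regardless, so the conclusion stands.
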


\begin{proof}
We have only to check that $\psi_\lambda$ still satisfies the inequality constraint appearing in the definitions of $\A$ and $\B$. Let $(x_1,\ldots,x_N) \in (\R^d)^N$ and set $I=\{i : \psi_\lambda(x_i)=\lambda\}$. Then consider sequences $(y_i^n)_n$ such that
$$|y_i^n|\to+\infty,\qquad\lim_{n\to\infty}\psi(y_i^n)\ge\lambda,\qquad|y_i^n-y_j^n|\to+\infty\ \text{whenever }i\neq j.$$
Then we have
\begin{align*}
\frac1N\sum_{i=1}^N \psi_\lambda(x_i) & \le \lim_{n \to +\infty} \frac1N \left(\sum_{i \in I} \psi(y_i^n) + \sum_{j \notin I} \psi(x_j) \right) \\
&\le\lim_{n\to+\infty}c\big((y_i^n)_{i\in I}, (x_j)_{j\notin I}\big)\le c(x)
\end{align*}
where, in the second inequality, we used the fact that all the terms $1/|y_i^n-y_j^n|$ and $1/|x_j-y_i^n|$ vanish as $n\to\infty$.
\end{proof} 

An important issue is the following duality representation of $\Cbar(\rho)$ which extends to the case $\|\rho\|<1$ the formula obtained in \cite{bcdp17} for $\|\rho\|=1$. 

\begin{theo}\label{duality}
For every $\rho\in\PP^-$, the following equalities hold
\be\label{cdual}
\Cbar(\rho)=\sup_{\psi\in\A}\left\{\int\psi\,d\rho+(1-\|\rho\|)\psi_\infty\right\}=\sup_{\psi \in\B}\left\{\int\psi\,d\rho+(1-\|\rho\|)\psi_\infty\right\},
\ee
where the classes $\A$ and $\B$ are defined in \eqref{defA} and \eqref{defB}.
\end{theo}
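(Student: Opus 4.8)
The plan is to prove the duality formula by transferring everything to the compact Alexandrov space $X=\R^d\cup\{\omega\}$, where $\Cbar(\rho)$ becomes the genuine multi-marginal transport cost $\Ctild(\rhotild)$ with $\rhotild=S^\#\rho+(1-\|\rho\|)\delta_\omega$ by Proposition~\ref{prop:ctilde}. On the compact space $X$, with the lower semicontinuous cost $\ctild$, the classical Kantorovich--Kellerer duality (see \cite{kel84}) applies: the transport cost $\Ctild(\rhotild)$ with all $N$ marginals equal to $\rhotild$ equals
$$\sup\Big\{\int_X\widetilde\psi\,d\rhotild\ :\ \widetilde\psi\in C(X),\ \tfrac1N\textstyle\sum_{i=1}^N\widetilde\psi(x_i)\le\ctild(x)\ \forall x\in X^N\Big\},$$
where one may restrict the test functions to continuous ones because $\ctild$ is lsc and $X^N$ is compact (a standard approximation of lsc cost from below by continuous ones, or Kellerer's general duality theorem, gives no duality gap). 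The key identification is that a continuous function $\widetilde\psi$ on $X$ is exactly a function of the form $\psi=\f+\kappa$ with $\f\in C_0$ and $\kappa=\widetilde\psi(\omega)$, i.e.\ an element of $C_0\oplus\R$, and then $\psi_\infty=\psi^\infty=\widetilde\psi(\omega)=\kappa$. Under this identification, $\int_X\widetilde\psi\,d\rhotild=\int_{\R^d}\psi\,d\rho+(1-\|\rho\|)\kappa=\int\psi\,d\rho+(1-\|\rho\|)\psi_\infty$, and the admissibility constraint $\tfrac1N\sum\widetilde\psi(x_i)\le\ctild(x)$ on $X^N$ restricts, on the subset $(\R^d)^N$, to $\tfrac1N\sum\psi(x_i)\le c(x)$; the remaining inequalities on $X^N$ involving at least one coordinate equal to $\omega$ translate into the conditions $\psi^\infty\le0$ type bounds (obtained by induction on $N$ exactly as noted in the excerpt), which are automatically implied. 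This yields the first equality $\Cbar(\rho)=\sup_{\psi\in\A}\{\int\psi\,d\rho+(1-\|\rho\|)\psi_\infty\}$.

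For the second equality, note that trivially $\A\subset\B$ (every $\psi\in\A$ is lsc with $\inf\psi>-\infty$, since $\psi=\f+\kappa$ with $\f\in C_0$ is bounded), so $\sup_\B\ge\sup_\A$. For the reverse inequality $\sup_\B\le\Cbar(\rho)$, I would show directly that for any $\psi\in\B$ one has $\int\psi\,d\rho+(1-\|\rho\|)\psi_\infty\le\Cbar(\rho)$: take any competitor $\rho_n\in\PP$ with $\rho_n\weak\rho$; on $(\R^d)^N$, any optimal plan $P_n\in\Pi(\rho_n)$ satisfies $\int c\,dP_n\ge\int\tfrac1N\sum\psi(x_i)\,dP_n=\int\psi\,d\rho_n$. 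Passing to the liminf and using that $\psi$ is lsc and bounded below, together with the possible loss of mass at infinity where $\psi$ contributes at least $\psi_\infty\cdot(\text{lost mass})$ (here one uses that $\psi_\infty=\liminf_{|x|\to\infty}\psi(x)$ is a lower bound for $\psi$ near infinity up to arbitrarily small error), one gets $\liminf_n\int c\,dP_n\ge\int\psi\,d\rho+(1-\|\rho\|)\psi_\infty$, whence $\Cbar(\rho)\ge\sup_\B$. This sandwiches all three quantities together.

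The main obstacle I anticipate is the careful handling of the no-duality-gap statement for the lower semicontinuous cost $\ctild$ on the compact space $X$ and, relatedly, the precise translation of the admissibility constraint: one must verify that restricting to continuous potentials $\widetilde\psi\in C(X)$ (equivalently to $\A$ rather than to all lsc admissible potentials on $X$) does not decrease the supremum. This is where invoking Kellerer's theorem, or an explicit inf-convolution/regularization argument approximating $\ctild$ from below by continuous costs $c^{(m)}\uparrow\ctild$ and passing to the limit in the duality for each $c^{(m)}$, does the work; one then checks that the attained optimal potentials for the regularized problems can be taken in $\A$ and that their limit behaves well (using Lemma~\ref{troncation} to control them from below by $\psi_\infty$, which prevents them from escaping to $-\infty$). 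The second, more elementary obstacle is the bookkeeping of lost mass in the direct estimate for $\psi\in\B$: one should fix $R$ large, split $\rho_n$ into its part on $B(0,R)$ and outside, use $\inf_{|x|\ge R}\psi\to\psi_\infty$, and let $R\to\infty$ after $n\to\infty$.
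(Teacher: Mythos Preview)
Your proposal is correct but takes a genuinely different route for the first equality. The paper does \emph{not} invoke Kellerer duality on the compactification; instead it introduces the positively one-homogeneous functional $H(\rho,\alpha)=\alpha\,C(\rho/\alpha)$ on $\M_b\times\R$, computes its Fenchel conjugate explicitly (the condition $H^*(\varphi,\beta)\le0$ reduces, by testing against Dirac masses in $\PP(\R^{Nd})$, to the pointwise inequality defining $\A$), and then identifies $\Cbar(\rho)$ with the biconjugate $\overline H(\rho,1)$ by a direct approximation argument. Your approach, by contrast, leverages the already-proved Proposition~\ref{prop:ctilde} and applies standard duality for lsc costs on the compact space $X$, recognizing $C(X)=C_0\oplus\R$; this is shorter and makes the appearance of the term $(1-\|\rho\|)\psi_\infty$ transparent, at the price of invoking Kellerer as a black box and of checking (as you correctly flag) that the constraint on $(\R^d)^N$ extends to $X^N$ for continuous $\psi$---which it does by sending coordinates to infinity, cf.\ Remark~\ref{compactpart}. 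The paper's self-contained convex-analysis route has the side benefit of exhibiting $\Cbar$ as the Fenchel conjugate of $M_N$, a fact exploited repeatedly afterwards. For the second equality with $\B$, the two arguments are close in spirit: the paper extends $\psi\in\B$ to an lsc function $u$ on $X$ with $u(\omega)=\psi_\infty$, approximates it from below by an increasing sequence in $C(X)$ (restricting to elements of $\A$), and concludes by monotone convergence on $(X,\rhotild)$; your direct liminf argument is equivalent, and the cleanest way to execute the ``bookkeeping of lost mass'' you describe is precisely to observe that $\rho_n\to\rhotild$ narrowly on $X$ with $u$ lsc and bounded below there.
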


\begin{rema}\label{u>uinfty}
By Lemma \ref{troncation}, the two equalities in \eqref{cdual} are still valid if we restrict the supremum to those functions $\psi$ such that $\psi\ge\psi_\infty$. This can be easily checked by substituting an admissible $\psi$ by the function $\max\{\psi,\psi_\infty\}$ which is still admissible 
with a larger energy. Note that such a function it holds $\psi_\infty=\psi^\infty$. 
\end{rema}

\begin{coro} \label{smallermass}
Let $\rho_1,\rho_2$ in $\PP^-$ such that $\rho_1\le\rho_2$. Then $\Cbar(\rho_1)\le \Cbar(\rho_2)$.
\end{coro}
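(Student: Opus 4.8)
The plan is to read the monotonicity directly off the dual representation of $\Cbar$ given by Theorem \ref{duality}. Recall that
$$\Cbar(\rho)=\sup_{\psi\in\A}\left\{\int\psi\,d\rho+(1-\|\rho\|)\psi_\infty\right\},$$
and that, by Remark \ref{u>uinfty}, the value of this supremum is unchanged if we restrict to admissible potentials $\psi\in\A$ satisfying $\psi\ge\psi_\infty$ (for which moreover $\psi_\infty=\psi^\infty$). Every such $\psi$ lies in $C_0\oplus\R$ and is therefore bounded, so all integrals appearing below are finite and no integrability issue arises.

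First I would set $\sigma:=\rho_2-\rho_1$, which is a nonnegative finite Borel measure, so that $\|\rho_2\|=\|\rho_1\|+\|\sigma\|$. Then, for any fixed $\psi\in\A$ with $\psi\ge\psi_\infty$, I would split the dual functional as
$$\int\psi\,d\rho_2+(1-\|\rho_2\|)\psi_\infty=\left(\int\psi\,d\rho_1+(1-\|\rho_1\|)\psi_\infty\right)+\int(\psi-\psi_\infty)\,d\sigma.$$
The crucial observation is that the remainder $\int(\psi-\psi_\infty)\,d\sigma$ is nonnegative, being the integral of the nonnegative function $\psi-\psi_\infty$ against the nonnegative measure $\sigma$. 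Hence, for every admissible $\psi$ in this restricted class, the dual functional evaluated at $\rho_2$ dominates the one evaluated at $\rho_1$.

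Taking the supremum over all such $\psi$ and invoking Remark \ref{u>uinfty} once for $\rho_1$ and once for $\rho_2$ then yields $\Cbar(\rho_1)\le\Cbar(\rho_2)$, which is the claim. I expect no genuine obstacle here; the only subtlety worth flagging is that the inequality is \emph{not} a trivial consequence of $\rho_1\le\rho_2$ alone, since for sign-changing potentials $\int\psi\,d\rho$ need not be monotone and the mass-defect term $(1-\|\rho\|)\psi_\infty$ must be controlled. This is precisely why the reduction to potentials bounded below by $\psi_\infty$ in Remark \ref{u>uinfty} is needed, and it is the one ingredient I would make sure to use explicitly.
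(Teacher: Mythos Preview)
Your proof is correct and follows essentially the same approach as the paper: both use the dual representation from Theorem~\ref{duality}, invoke Remark~\ref{u>uinfty} to restrict to potentials with $\psi\ge\psi_\infty$, and then read off the monotonicity from the nonnegativity of $\psi-\psi_\infty$. The only difference is cosmetic: the paper rewrites the dual functional as $\int(\psi-\psi_\infty)\,d\rho+\psi_\infty$ and concludes directly, while you introduce $\sigma=\rho_2-\rho_1$ and isolate the remainder $\int(\psi-\psi_\infty)\,d\sigma$ explicitly.
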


\begin{proof}\ Let us rewrite \eqref{cdual} as
$$\Cbar(\rho)=\sup_{\psi\in\A}\left\{\int(\psi-\psi_\infty)\,d\rho+\psi_\infty\right\}.$$
In view of Remark \ref{u>uinfty}, we may assume that $\psi-\psi_\infty\ge0$ from which the desired inequality follows.
\end{proof}

\begin{rema}\label{compactpart}
In view of the compactification procedure introduced in Section \ref{srelax}, we may extend any function $\psi\in\mathcal{S}$ to $X=\R^d\cup\{\omega\}$ by setting $u=\psi$ on $\R^d$ and $u(\omega)=\psi_\infty$. Notice that, by construction, $u$ is lower semicontinuous as a function on $X$ (i.e. $u\in \mathcal{S}(X)$) and that it is continuous if and only $\psi_\infty=\lim_{|x|\to\infty}\psi(x)$ that is to say $\psi\in C_0\oplus\R^d$. Furthermore, the point-wise constraint for $\psi\in\B$ is equivalent in term of $u$ to the following
\be\label{contrainte}
\frac1N\sum_{i=1}^{i=N}u(x_i)\le\ctild(x_1,x_2,\dots,x_N)\qquad\forall x\in X^N,
\ee
being the extended cost $\ctild$ defined by \eqref{ctil}. Accordingly, if $\rho\in\PP^-$, the representation formula \eqref{cdual} can be rewritten as
$$\Cbar(\rho)= \sup \left\{ \int_X u\,d\tilde{\rho}\ :\ \text{$u\in \mathcal{S}(X)$ satisfies \eqref{contrainte}}\right\},$$ 
where the supremum is taken alternatively in $C(X)$ or in $\mathcal{S}(X)$ and $\tilde{\rho}:=\rho+(1-\|\rho\|)\delta_\omega$ denotes the probability measure on $X$ defined by
$$\int_X u\,d\tilde{\rho}:=\int u\,d\rho+(1-\|\rho\|)u(\omega)=\int u\,d\rho+(1-\|\rho\|)u_\infty.$$
Let us finally notice following equivalence for a sequence $(\rho_n)$in $\PP$ and $\rho\in\PP^-$:
$$\rho_n\weak\rho\quad\Longleftrightarrow\quad\rho_n\to\tilde{\rho}\quad\text{tightly on $X$}.$$
\end{rema}

\begin{proof}[Proof of Theorem \ref{duality}]
For every pair $(\rho,\alpha) \in\M_b\times\R$, we set
$$H(\rho,\alpha):=\begin{cases}
\ds\alpha\,C(\rho/\alpha)
&\text{if $\rho\ge 0$, $\alpha\in\R^+$ and $\|\rho\|=\alpha$\,,}\\
+\infty
&\text{otherwise.} 
\end{cases}$$
As $C$ is convex proper and nonnegative on probability measures, it is easy to check that $H$ is still convex proper nonnegative. In addition it is positively one homogeneous. Therefore the lower semicontinuous envelope of $H$ on $\M_b\times\R$ endowed with its weak star topology can be characterized as the bipolar of $H$ with respect to the duality between $\M_b\times \R$ and $C_0\times\R$, namely
\be\label{dualH}
\begin{split}
\overline{H}(\rho,\alpha)&=\sup\left\{\int\f\,d\rho+\alpha\beta-H^*(\f,\beta)\right\}\\
&=\sup\left\{\int\f\,d\rho+\alpha\beta\ :\ H^*(\f,\beta)\le0\right\}
\end{split}\ee
where the supremum is taken over pairs $(\f,\beta)\in C^0\times\R$ and where in the second equality we exploit the homogeneity of $H$. By the definition of $H$, we infer that:
$$H^*(\f,\beta)\le0\quad\Longleftrightarrow\quad\int\f\,d\rho+\beta\le C(\rho)\quad\forall\rho\in\PP.$$
By the definition of $C(\rho)$, the later inequality is equivalent to
$$\frac1N\int_{R^{Nd}}\sum_{i=1}^N\f(x_i)\,dP+\beta\le\int_{R^{Nd}}c(x)\,P(dx)\qquad\forall P\in\PP.$$
By taking for $P$ a Dirac mass, we may conclude that
$$H^*(\f,\beta)\le0\quad\Longleftrightarrow\quad\frac1N\sum_{i=1}^N\f(x_i)+\beta\le c(x)\qquad\forall x\in\R^{Nd}.$$
Therefore, setting $\psi=\f+\beta$ (thus $\psi_\infty=\beta$) and $\alpha=1$, we deduce from \eqref{defA} and \eqref{dualH} that
$$\overline{H}(\rho,1)=\sup_{\psi\in\A}\left\{\int\psi\,d\rho+(1-\|\rho\|)\psi_\infty\right\}.$$
Therefore to establish the first equality in \eqref{cdual}, we are reduced to show that:
\be\label{CLAIM1}
\Cbar(\rho)=\overline{H}(\rho,1),\qquad\forall\rho\in\PP^-
\ee
The lower bound inequality for $\Cbar(\rho)$ is straightforward since, for every sequence $\rho_n\weak\rho$, it holds
$$\liminf_n C(\rho_n) = \liminf_n H(\rho_n,1)\ge\overline{H}(\rho,1) .$$
To show that $\Cbar(\rho) \le \overline{H}(\rho,1)$ for every $\rho\in\PP^-$, we choose a particular sequence $(\rho_n,\alpha_n)$ in $\M^+\times\R^+$ such that
$$\rho_n\weak\rho,\qquad\alpha_n\to1,\qquad H(\rho_n,\alpha_n)=\alpha_nC\Big(\frac{\rho_n}{\alpha_n}\Big)\to\overline{H}(\rho,1).$$
Then, setting $\rhotild_n:=\rho_n/\alpha_n$, we obtain a sequence of probability measures $(\rhotild_n)$ such that $\rhotild_n\weak \rho$ and $C(\rhotild_n) \to \overline{H}(\rho,1)$. Thus \eqref{CLAIM1} is proved.

In order to prove the second equality in \eqref{cdual}, since the subset $\B$ is larger than $\A$, it is enough
to show that
\be\label{CLAIM2}
\Cbar(\rho)\ge\int\psi\,d\rho+(1-\|\rho\|)\psi_\infty,\qquad\forall\psi\in\B,\ \forall\rho\in\PP^-.
\ee
By \eqref{CLAIM1}, we know that the inequality above holds whenever $\psi$ belongs to $\A$. 
To extend it to $\psi\in \B$, we follow Remark \ref{compactpart} considering the element of $\mathcal{S}(X)$ defined by $u= \psi$ on $\R^d$ and $u(\omega)=\psi_\infty$. As $X$ is a compact metrizable space, we can find a sequence $(u_n)$ in $C^0(X)$ such that
$$u_{n+1}\ge u_n,\qquad\sup_n u_n= u.$$
Clearly the restriction $\psi_n=u_n\res\R^d$ satisfies $\psi_n\le\psi$, thus belongs to $\A$. By applying Beppo Levi's on $X$ equipped with the probability measure $\tilde{\rho}=\rho+(1-\|\rho\|)\delta_\omega$, we obtain:
\[\begin{split}
\lim_n\int\psi_n\,d\rho+(1-\|\rho\|)(\psi_n)_\infty
&=\lim_n\int_X u_n\,d{\tilde\rho}\\
&=\int_X u\,d{\tilde\rho}=\int\psi\,d\rho+(1-\|\rho\|)\psi_\infty\;,
\end{split}\]
from which \eqref{CLAIM2} follows. The proof of Theorem \ref{duality} is then achieved.
\end{proof}

\begin{exam}\label{exam1}
Take for every $R>0$
$$\psi_R(x)=\begin{cases}
(N-1)/(4R)&\text{if }|x|<R,\\
-1/(4R)&\text{if }|x|\ge R.
\end{cases}$$
It is easy to see that $\psi_R\in\B$; indeed, if $x_1,\dots,x_k$ are in the ball $B_R(0)$ and $x_{k+1},\dots,x_N$ are in $\R^d\setminus B_R(0)$, we have to verify that
$$\frac1N\left(k\frac{N-1}{4R}-(N-k)\frac{1}{4R}\right)\le\sum_{1\le i<j\le N}\frac{1}{|x_i-x_j|}\;.$$
Now, the left-hand side above reduces to $(k-1)/(4R)$ while for the right-hand side we have
$$\sum_{1\le i<j\le N}\frac{1}{|x_i-x_j|}\ge\sum_{1\le i<j\le k}\frac{1}{|x_i-x_j|}\ge\frac{k(k-1)}{2}\frac{1}{2R}\;.$$
\end{exam}

As a direct consequence of Theorem \ref{duality}, we obtain

\begin{prop}\label{cbzero}
It holds $\Cbar(\rho)=0$ if and only if $\|\rho\|\le1/N$.
\end{prop}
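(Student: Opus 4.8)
The plan is to prove both implications using the duality formula \eqref{cdual} together with the explicit test functions $\psi_R$ from Example \ref{exam1}.

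\textbf{Sufficiency ($\|\rho\|\le 1/N$ implies $\Cbar(\rho)=0$).} Since $\ctild\ge 0$ and every competitor plan $\Ptild$ has nonnegative integral, we have $\Cbar(\rho)\ge 0$ always, so it suffices to show $\Cbar(\rho)\le 0$. I would first treat the extreme case: if $\|\rho\|=1/N$, then by Remark \ref{fractionalrho} (inequality \eqref{easy}) with $k=1$ we get $\Cbar(\rho)\le\C_1(N\rho)=0$ since $\C_1\equiv0$; alternatively, use the stratification formula \eqref{rhok} directly, putting $\rho_1=N\rho$ and $\rho_k=0$ for $k\ge2$, which is admissible because $\|\rho_1\|=1$. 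For general $\|\rho\|\le 1/N$, monotonicity (Corollary \ref{smallermass}) gives $\Cbar(\rho)\le\Cbar(\rho')$ for any $\rho'\ge\rho$ with $\|\rho'\|=1/N$, which always exists; hence $\Cbar(\rho)\le 0$.

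\textbf{Necessity ($\Cbar(\rho)=0$ implies $\|\rho\|\le 1/N$).} I would argue by contraposition: assume $\|\rho\|>1/N$ and produce an admissible dual test function giving strictly positive energy. Using the functions $\psi_R\in\B$ from Example \ref{exam1}, note $(\psi_R)_\infty=-1/(4R)$, so
\[
\int\psi_R\,d\rho+(1-\|\rho\|)(\psi_R)_\infty
\ge \frac{N-1}{4R}\,\rho\big(B_R(0)\big)-\frac{1}{4R}\,\big(\|\rho\|-\rho(B_R(0))\big)-\frac{1-\|\rho\|}{4R}
= \frac{N\,\rho(B_R(0))-1}{4R}.
\]
Since $\rho(B_R(0))\uparrow\|\rho\|>1/N$ as $R\to+\infty$, for $R$ large enough the numerator $N\rho(B_R(0))-1$ is a fixed positive constant, so the right-hand side is strictly positive. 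By the duality formula \eqref{cdual} (second equality, over $\B$), this forces $\Cbar(\rho)>0$.

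\textbf{Main obstacle.} There is no serious obstacle here; the statement is essentially a corollary of the tools already assembled (duality plus the explicit example plus monotonicity or stratification). The only point requiring a little care is making sure the endpoint case $\|\rho\|=1/N$ is handled cleanly — one must check that $\rho_1=N\rho$ is genuinely admissible in \eqref{rhok}, i.e. that it is a sub-probability with $\sum_k\|\rho_k\|\le1$, which holds with equality. One should also note that the computation with $\psi_R$ only uses $\rho(B_R(0))\le\|\rho\|$ and $\rho(\R^d\setminus B_R(0))=\|\rho\|-\rho(B_R(0))$, which are immediate, and that $\psi_R\in\B$ was already verified in Example \ref{exam1}, so no new admissibility check is needed.
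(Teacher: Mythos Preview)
Your proof is correct. The necessity direction is essentially identical to the paper's: both plug the test functions $\psi_R$ of Example \ref{exam1} into the duality formula \eqref{cdual} and read off the inequality $N\rho(B_R)-1\le 0$ (you phrase it by contraposition, the paper directly, but the computation is the same; your ``$\ge$'' is in fact an equality).

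For sufficiency you take a genuinely different route. The paper stays within the duality framework: it shows that every $\psi\in\A$ satisfies the pointwise bound $\psi(x)+(N-1)\psi_\infty\le 0$ (obtained by sending $x_2,\dots,x_N$ to infinity in the constraint), whence $\int\psi\,d\rho+(1-\|\rho\|)\psi_\infty\le(1-N\|\rho\|)\psi_\infty\le 0$. You instead invoke the primal stratification formula \eqref{rhok}, placing all the mass in the single-particle stratum. Both arguments are short and clean; yours leans on Theorem \ref{prop:cpartial} (proved earlier, so legitimate), while the paper's is self-contained within the duality section and avoids any appeal to the stratification theorem. A small simplification of your argument: you do not actually need the monotonicity of Corollary \ref{smallermass} to handle $\|\rho\|<1/N$, since the choice $\rho_1=N\rho$, $\rho_k=0$ for $k\ge 2$ is already admissible in \eqref{rhok} whenever $\|\rho\|\le 1/N$ (then $\sum_k\|\rho_k\|=N\|\rho\|\le 1$), giving $\Cbar(\rho)\le\C_1(N\rho)=0$ in one stroke.
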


\begin{proof} Assume first that $\rho$ satisfies $\|\rho\|\le1/N$ and let $\psi\in\A$. By fixing $x_1=x$ and letting $x_2,x_3,\dots x_N$ tend to infinity in different directions in the inequality
$$\frac1{N}\sum_{i=1}^{i=N}\psi(x_i)\le c(x),$$
we infer that
$$\psi(x)+(N-1)\psi_\infty\le0\qquad\forall x\in\R^d.$$
In particular, by sending $|x|$ to infinity, we deduce that $\psi_\infty\le0$. Therefore
$$\int\psi\,d\rho+(1-\|\rho\|)\psi_\infty\le\psi_\infty(1-N\|\rho\|)\le0\qquad\forall\psi\in\A.$$
By \eqref{cdual}, we are led to $\Cbar(\rho)\le0$, thus $\Cbar(\rho)=0$ whenever $\|\rho\|\le1/N$.
 
Let us prove now the converse implication and take an element $\rho\in\PP^-$ such that $\Cbar(\rho)=0$. By \eqref{cdual} for every $\psi\in\B$ we have
$$\int\psi\,d\rho+(1-\|\rho\|)\psi_\infty\le0.$$
In particular, taking as $\psi$ the function $\psi_R$ of Example \ref{exam1}, we have
$$\frac{N-1}{4R}\rho(B_R)-\frac{1}{4R}\rho(B_R^c)-(1-\|\rho\|)\frac{1}{4R}\le0,$$ 
so that
$$(N-1)\rho(B_R)\le\rho(B_R^c)+1-\|\rho\|.$$
Letting $R\to+\infty$ gives
$$(N-1)\|\rho\|\le1-\|\rho\|$$
from which $\|\rho\|\le1/N$.
\end{proof}

\subsection{A weak formulation for dual potentials }\label{weakdual}

The initial motivation of this subsection is to achieve the computation of $\Cbar(\rho)$ through the formula \eqref{cdual} when $\rho$ has a finite support that is of the kind $\rho=\sum_1^K \alpha_i\delta_{a_i}$ where the $a_i\in\R^d$ are distinct, $\alpha_i\ge0$ and $\sum\alpha_i\le 1$. As in Example \ref{exam2} below, we wish to reduce the computation of the supremum in \eqref{cdual} to solving a finite dimensional linear programming problem where the unknown vector involved $y\in\R^{K+1}$ is defined by $y_i=\psi(a_i)$ for $1\le i\le K$ and $y_{K+1}=\psi_\infty$. The linear constraints on the components $y_i$ are deduced simply from
the overall inequalities in $\A$ (or $\B$) by restricting them to the support of $\rho^{N\otimes}$.

\medskip
Then the following issue arises naturally: can we conversely pass from an inequality holding $\rho^{N\otimes}$ almost everywhere to the overall inequality as required in Theorem \ref{duality}? Following the notations introduced in Remark \ref{compactpart}, we can answer this question through the following weak formulation of the dual problem.

\begin{prop}\label{rhopp}
Let $\rho\in\PP^-(\R^d)$ and let $\tilde{\rho}\in \PP(X)$ defined by $\tilde{\rho} =\rho+(1-\|\rho\|)\delta_\omega$. Then
\be\label{dual-pp}
\Cbar(\rho)=\sup\left\{\int_X u\,d\tilde{\rho}\ :\ \frac1N\sum_{i=1}^N u(x_i)\le c(x)\quad\rhotild^{N\otimes}\text{ a.e. $x\in X^N$}\right\},
\ee
being the supremum taken on $\mathcal{S}(X)$ or on $C(X)$.
\end{prop}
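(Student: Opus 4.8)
The plan is as follows. The inequality ``$\ge$'' in \eqref{dual-pp} is immediate: by Theorem \ref{duality} together with Remark \ref{compactpart}, $\Cbar(\rho)$ is already the supremum of $\int_X u\,d\tilde\rho$ over the functions $u\in\mathcal{S}(X)$ (resp.\ $u\in C(X)$) satisfying the \emph{pointwise} constraint \eqref{contrainte}, and each such $u$ is \emph{a fortiori} admissible for the right-hand side of \eqref{dual-pp}. So the whole content of the statement is the reverse inequality: if $u\in\mathcal{S}(X)$ satisfies $\frac1N\sum_{i=1}^N u(x_i)\le c(x)$ for $\rhotild^{N\otimes}$-a.e.\ $x\in X^N$, then $\int_X u\,d\tilde\rho\le\Cbar(\rho)$. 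I would prove this in three steps: first reduce to a bounded $u$; then upgrade the almost everywhere inequality to a genuinely pointwise one on $(\spt\tilde\rho)^N$, at the price of modifying $u$ on a $\tilde\rho$-negligible set; and finally test the modified potential against an arbitrary transport plan.

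For the reduction, note that $u\in\mathcal{S}(X)$ is automatically bounded below (a lower semicontinuous function on the compact space $X$), so only an upper bound is needed, and one may assume $0<\|\rho\|<1$ (the case $\|\rho\|=0$ is trivial, and $\|\rho\|=1$ gives $\tilde\rho=\rho$, the classical situation of \cite{bcdp17}). Then the singleton $\{(\omega,\dots,\omega)\}$ has positive $\rhotild^{N\otimes}$-mass, so the a.e.\ constraint forces $u(\omega)\le c(\omega,\dots,\omega)=0$; testing it on the set $\R^d\times\{\omega\}^{N-1}$, which again has positive mass, yields $u\le-(N-1)u(\omega)$ $\tilde\rho$-a.e., so replacing $u$ by $\min\{u,-(N-1)u(\omega)\}$ (still admissible for \eqref{dual-pp}, with the same integral against $\tilde\rho$) we may indeed assume $u$ bounded. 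Now suppose we have produced $\hat u:X\to[-\infty,+\infty)$, bounded above, with $\hat u=u$ $\tilde\rho$-a.e.\ and $\frac1N\sum_{i=1}^N\hat u(x_i)\le c(x)$ for \emph{every} $x\in(\spt\tilde\rho)^N$. Since every $P\in\Pi(\tilde\rho)$ is concentrated on $(\spt\tilde\rho)^N$, has all marginals equal to $\tilde\rho$, and charges no $\tilde\rho$-null coordinate set,
\[
\int_{X^N}c\,dP\;\ge\;\int_{X^N}\frac1N\sum_{i=1}^N\hat u(x_i)\,dP\;=\;\int_X\hat u\,d\tilde\rho\;=\;\int_X u\,d\tilde\rho,
\]
and taking the infimum over $P\in\Pi(\tilde\rho)$ gives $\int_X u\,d\tilde\rho\le\Ctild(\tilde\rho)=\Cbar(\rho)$ by Proposition \ref{prop:ctilde}; the same computation with $C(X)$ in place of $\mathcal{S}(X)$ handles the continuous version of \eqref{dual-pp}.

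The hard part is the construction of $\hat u$, that is, the passage from an a.e.\ admissible bounded $u$ to a function admissible at \emph{every} point of $(\spt\tilde\rho)^N$ and differing from $u$ only on a $\tilde\rho$-null set. This is exactly where the mere lower semicontinuity of $c$ on the compact space $X^N$ (as opposed to continuity) bites: the function $x\mapsto\frac1N\sum_i u(x_i)-c(x)$ has no semicontinuity property that would let one deduce ``$\le0$ everywhere'' from ``$\le0$ $\rhotild^{N\otimes}$-a.e.'', so no soft argument is available. I would handle this by a Kantorovich--Kellerer-type construction on the compact metric space $\spt\tilde\rho$ equipped with the full-support probability $\tilde\rho$ and the lower semicontinuous symmetric cost $c$: one replaces $u$ by an iterated partial $c$-transform, with the infimum restricted to $\spt\tilde\rho$, which is pointwise admissible by construction and, via a Fubini argument exploiting that the constraint holds $\rhotild^{N\otimes}$-a.e., dominates $u$ $\tilde\rho$-a.e.; this is the kind of statement established in \cite{kel84,dep15}, adapted here to the symmetric $N$-marginal setting. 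An alternative, dual route is to identify the right-hand side of \eqref{dual-pp} with $\inf\{\int c\,dP:\ P\in\Pi(\tilde\rho),\ P\ll\rhotild^{N\otimes}\}$ by Fenchel--Rockafellar duality and then to prove that this restricted infimum still equals $\Cbar(\rho)$; the difficulty there shifts to approximating an optimal plan by absolutely continuous ones while keeping the transport cost under control near the diagonal $\{x_i=x_j\}$, where $c=+\infty$.
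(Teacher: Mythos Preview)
Your overall strategy is sound --- the only nontrivial direction is $\int_X u\,d\tilde\rho\le\Cbar(\rho)$ for each $u$ satisfying the $\rhotild^{N\otimes}$-a.e.\ constraint --- and your preliminary reductions (to bounded $u$, to $0<\|\rho\|<1$) are correct. However, the ``hard part'' is left as a sketch, and the $c$-transform argument you outline has a genuine gap. The Fubini step gives: for $\tilde\rho$-a.e.\ $x_1$, the inequality $u(x_1)\le Nc(x_1,y_2,\dots,y_N)-\sum_{i\ge2}u(y_i)$ holds for $\rhotild^{\,\otimes(N-1)}$-a.e.\ $(y_2,\dots,y_N)$, so $u(x_1)$ is bounded by the \emph{essential} infimum of the right-hand side. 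But the $c$-transform you want --- the \emph{true} infimum over $(\spt\tilde\rho)^{N-1}$ --- lies \emph{below} the essential infimum, so this does not give $\hat u\ge u$ a.e.; the inequality points the wrong way. Conversely, defining $\hat u$ via the essential infimum secures $\hat u\ge u$ a.e.\ but destroys the ``pointwise admissible by construction'' property. Iterating does not close this circle without further input, and the references \cite{kel84,dep15} do not directly cover the present symmetric $N$-marginal setting with a cost equal to $+\infty$ on sets that may carry positive $\rhotild^{N\otimes}$-mass (whenever $\rho$ has atoms). Your alternative dual route is, as you yourself note, only a reformulation of the same difficulty. Finally, the case $\|\rho\|=1$ is not simply ``the classical situation of \cite{bcdp17}'': that reference establishes duality with the \emph{pointwise} constraint, not the a.e.\ one.

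The paper takes a different, concrete path that bypasses the $c$-transform entirely. One first treats continuous $u$ satisfying $u\ge u(\omega)$. For such $u$, the a.e.\ constraint is upgraded to a pointwise one on $(\spt\rho)^N$ by integrating it over products of small balls $\prod_i B(x_i,r)$, dividing by $\prod_i\rho(B(x_i,r))$, and letting $r\to0$ using the continuity of $u$ and of $c$ away from the diagonal. Then, since $c-\frac1N\sum_i u$ is \emph{lower} semicontinuous (here the sign is right because $u$ is continuous), for each $\eps>0$ one finds an open set $\Omega_\eps\supset\spt\rho$ on which the constraint holds within $\eps$. Redefining $u$ to equal $u(\omega)$ on $X\setminus\Omega_\eps$ produces a function $u_\eps\in\mathcal S(X)$ satisfying the \emph{global} pointwise constraint for the cost $c+\eps$ and agreeing with $u$ $\tilde\rho$-a.e. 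Rather than testing $u_\eps$ against an optimal plan for $\tilde\rho$, one tests it against plans $P_n\in\Pi(\rho_n)$ for a minimizing sequence of probabilities $\rho_n\weak\rho$ realizing $\Cbar(\rho)$, and concludes via the lower semicontinuity of $u_\eps$ and the tight convergence $\rho_n\to\tilde\rho$ on $X$. The hypothesis $u\ge u(\omega)$ is removed in a second step (with a specific modification when $\|\rho\|=1$), and the continuity of $u$ is dropped at the very end by monotone approximation $u_n\nearrow u$ with $u_n\in C(X)$.
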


\begin{proof} As the admissible set in the right hand side of \eqref{dual-pp} is larger than the one given by \eqref{contrainte}, we have only to prove that, for every $\rho\in\PP^-$, it holds:
\be\label{Claim10}
\Cbar(\rho)\ge\int_X u\,d\tilde{\rho}\qquad\text{for }u\in\tilde{\B}(X),
\ee
where $\tilde{\B}(X)$ denotes the set of elements $u\in\mathcal{S}(X)$ such that
\be\label{constraintpp}
\frac1N\sum_{i=1}^N u(x_i)\le c(x)\qquad\rhotild^{N\otimes}\text{ a.e. }x\in X^N.
\ee 
In a first step, we assume that:
\be\label{Hstep1}
u\in C_0\oplus\R\quad\text{with}\quad u(x)\ge u_\infty.
\ee
First we notice that the inequality in \eqref{constraintpp} holds in fact point-wise in $(\spt(\rho))^N$. Indeed, if $x \in (\spt(\rho))^N$ is such that $c(x)<+\infty$,
then we may integrate the inequality \eqref{constraintpp} on $\Pi_{i=1}^N B(x_i, r)$ and then, dividing by $\Pi_{i=1}^N \, \rho(B(x_i, r))$ and sending $r\to 0$,
we deduce from the continuity of $u$ and $c$ at $x$ that
$$\frac1N\sum_{i=1}^{i=N} u(x_i) \le c(x).$$
Take $\eps>0$. By the lower semicontinuity of $c(x)-\frac1N\sum_i u(x_i)$, the subset
$$\left\{x\in X^N\ :\ \sum_{i=1}^{i=N}u(x_i)<c(x)+\eps\right\}$$
is an open neighborhood of $(\spt(\rho))^N$. Therefore we may chose an open subset $\O_\eps\subset\R^d$ such that:
$$(\spt(\rho))^N\subset\O_\eps\;,\qquad\frac1{N}\sum_{i=1}^{i=N} u(x_i)<c(x)+\eps\text{ for all }x\in(\O_\eps)^N.$$
Let us now define:
$$u_\eps(z):=\begin{cases}
u(z)&\text{if }z\in\O_\eps\\
u_\infty&\text{if }z\in X\setminus\O_\eps.
\end{cases}$$
Then by \eqref{Hstep1}, $u_\eps$ belongs to $\title{S}(X)$. Furthermore it satisfies the overall inequality deduced from \eqref{contrainte} replacing $c$ by $c+\eps$. We are now in position to prove \eqref{Claim10}: choose a sequence $(P_n)$ in $\PP(R^{Nd})$ such that $\Pi(P_n)=\rho_n\weak\rho$ and
$$\Cbar(\rho)=\lim_n C(\rho_n)=\lim_n\int c(x)\,P_n(dx).$$
We obtain
\[\begin{split}
\Cbar(\rho) =\lim_n \int_{X^N} c(x)\,P_n(dx)
&\ge\liminf_n \int_{X^N}\sum_{i=1}^N \frac{u_\eps(x_i)}{N}\,P_n(dx) - \eps\\
&\ge\liminf_n \int_X u_\eps\,d\rho_n-\eps\\
&\ge\int_X u_\eps\,d\rhotild\ -\eps\ ,
\end{split}\]
where in the last line we exploit the lower semicontinuity of $u_\eps$ and the (tight) convergence $\rho_n\to\rhotild=\rho+(1-\|\rho\|)\delta_\omega$. We conclude the proof of \eqref{Claim10} by noticing that $u_\eps=u$ $\rhotild$ a.e. ($u_\eps=u$ on $\spt(\rho)\cup\{\omega\}$).

\med
In a second step, we remove the assumption that $u(x)\ge u_\infty$. Assume first that $\|\rho\|<1$, then $\rhotild$ has a positive mass on $\omega$ and condition \eqref{constraintpp} implies then that, for every $k\in\{1,\dots,N\}$:
$$\frac1N\left(\sum_{i=1}^k u(x_i)+(N-k) u_\infty\right)\le c_k(x_1,x_2,\dots,x_k)\qquad\rhotild^{k\otimes}\text{ a.e. $((x_1,\dots,x_k)\in X^k$}$$
Since $c_k(x_1,x_2,\dots,x_k)\le c(x) $ for every $x\in X$, by setting $v= \sup\{u,u_\infty\}$, we obtain a new continuous function which still satifies \eqref{constraintpp} and such that
$$\int_X v\,d\rhotild\ge\int_X u\,d\rhotild.$$
It is then enough to apply the first step to $v$. If $\|\rho\|=1$, we simply apply the construction of step 1 changing $u_\eps$ into
$$u_\eps(z):=\begin{cases}
u(z)&\text{if }z\in\O_\eps\\
\inf_X u&\text{if }z\in X\setminus{\O_\eps}
\end{cases}.$$
As now $\rhotild$ has no mass on $\omega$, we still have that $u_\eps=u$ $\rhotild$ a.e.
 
\med
Eventually, we drop the continuity assumption by approaching a lower semicontinuous function $u\in \tilde{\B}(X)$ by a sequence of continuous functions $(u_n)$ on $X$ such that:
$$u_{n+1}\ge u_n,\qquad\sup_n u_n=u.$$
Clearly each $u_n$ satisfies the constraint \eqref{constraintpp} so that
$$\Cbar(\rho)\ge\int_X u_n\,d\tilde{\rho}.$$
The conclusion follows by Beppo-Levi's (monotone convergence) Theorem.
\end{proof}

\begin{exam}\label{exam2}
Let $a_1, a_2, a_3\in\R^3$. Our aim is to compute
$$\Cbar(\a_1\delta_{a_1}+\a_2\delta_{a_2}+\a_3\delta_{a_3}):=f(\a_1,\a_2,\a_3)$$
as a function defined on the simplex
$$Q:=\Big\{\a\in\R^3\ :\ \a_i\ge0,\ \sum_i \a_i\le1\Big\}.$$
In order to lighten the calculations, we assume that
$$|a_1-a_2|=|a_2-a_3|=|a_2-a_3|=1$$
and we restrict ourselves to the case $N=3$, where the cost reads
$$c(x)=\sum_{1\le i<j\le 3} \frac1{|x_i-x_j|}\; = \; \frac{1}{|x_1-x_2|}+\frac{1}{|x_1-x_3|}+\frac{1}{|x_2-x_3|}\;.$$
Owing to the representation formula \eqref{dual-pp}, we obtain:
\begin{align*}
f(\a_1,\a_2,\a_3)=\sup\left\{\begin{array}{lll}\ds\sum_{i=1}^3 \a_i\,y_i+(1-\sum_j\a_j)\,y_4\ : &\ds\frac{y_1+y_2+y_3}{3}\le 3\\
y_k +2 y_4\le 0,\quad 1\le k\le 3,&\ds\frac{y_k+ y_l+y_4}{3}\le 1,\quad 1\le k\!<\!l\!\le \!3
\end{array}\right\}\end{align*}
where $y_i$ stands for the value of $u(a_i)$ for $i\in\{1,2,3\}$ while $y_4= u(\omega)$. Rewritten in terms of the nonnegative unknowns
$ x_4= y_4$ and $x_i= 2 x_4- y_i$ for $i\in\{1,2,3\}$, we are led to a classic linear programming:
$$f(\a_1,\a_2,\a_3)=\sup\left\{\Big(3\sum_j \a_j -1\Big) x_4 - \sum_{i=1}^3 \a_i\, x_i\ :\ x\ge 0,\ Ax\le b\right\},$$
being
$$A=\begin{pmatrix} 0&-1& -1& 3\\ -1&0& -1& 3 \\-1& -1&0 &3\\
-1& -1&-1 & 6
\end{pmatrix} \quad,\quad b= \begin{pmatrix} 3\\3\\3\\9\end{pmatrix}$$
It turns out that, for $\a\in[0,\frac13]^3$, only three vertices are involved in the feasible set, namely $(0,0,0,0)$, $(0,0,0,1)$ and $(3,3,3,3)$. We find
$$f(\a)=\begin{cases}
\gamma\left(\sum_{j=1}^3\a_j\right)&\text{if }\a\in[0,\frac1{3}]^3\\
+\infty&\text{otherwise,}
\end{cases}
\qquad\text{with }\gamma(s):=\begin{cases}
0&\text{if }s\le\frac13\\
3s-1&\text{if }\frac13\le s\le\frac23\\
3(2s-1)&\text{if }\frac23\le s\le1.
\end{cases}$$
Notice that here the function $\Cbar(\rho)$, as a function of $\|\rho\|$, is  not differentiable
at $\|\rho\|\in \{\frac13, \frac23\}$. This seems to be a general fact when considering  measures $\rho$
 supported by a set of $M$ points, more precisely if $\|\rho\|=1$, we expect  the function $t\mapsto \Cbar(t\rho)$ to be non-differentiable for fractional masses $t = \frac{k}{M}$.
\end{exam}

\subsection{Optimality primal-dual conditions }\label{Seccns}

By exploiting Theorem \ref{duality} and Theorem \ref{prop:cpartial} (in particular \eqref{rhok} and \eqref{cdual}), we can deduce necessary and sufficient conditions for optimality. It is convenient to introduce, for every $k\in\{1,2,\dots,N\}$ and $\f\in C_0$:
\be\label{def:Mk}
M_k(\f)=\sup\left\{\frac1k\sum_{i=1}^k\f(x_i)-c_k(x_1,\dots,x_k)\right\}
\ee

\begin{lemm}\label{prop:Mk} 
The following properties hold:
\begin{itemize}
\item[i)]The functional $M_k(\f)$ is convex and $1$-Lipschitz on $C_0$. Moreover
\be\label{recession} 
\lim_{t\to+\infty}\frac{M_k(t\f)}{t}=M_1(\f)=\sup\f\;.
\ee
\item[ii)]For every $\f\in C_0$ and $N\in\N^*$, we have:
\be\label{ineqMk}
M_1(\frac{\f}{N})\le\dots\le M_k\Big(\frac{k\f}{N}\Big)\le M_{k+1}\Big(\frac{(k+1)\f}{N}\Big)\le\dots\le M_N(\f).
\ee
\item[iii)]For every $k\in\N^*$ and $\psi\in C_0$, it holds
\be\label{plus}
M_k(\psi)=M_k(\psi_+)\ .
\ee
\end{itemize}
\end{lemm}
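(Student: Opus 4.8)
The plan is to verify the three items in order, treating each as an elementary consequence of the definition \eqref{def:Mk} together with the properties of $c_k$ established in \eqref{def:ck}. For item (i), convexity of $M_k$ follows immediately because $M_k$ is a supremum of affine functions of $\f$ (for each fixed $(x_1,\dots,x_k)$ the map $\f\mapsto\frac1k\sum_i\f(x_i)-c_k(x_1,\dots,x_k)$ is affine). The $1$-Lipschitz property with respect to the $\sup$-norm on $C_0$ follows from the estimate $\left|\frac1k\sum_i\f(x_i)-\frac1k\sum_i\tilde\f(x_i)\right|\le\Norm{\f-\tilde\f}_\infty$ uniformly in $(x_1,\dots,x_k)$, so that the two suprema differ by at most $\Norm{\f-\tilde\f}_\infty$. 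For the recession formula \eqref{recession}: since $c_k\ge0$, we have $\frac{M_k(t\f)}{t}=\sup\{\frac1k\sum_i\f(x_i)-\frac1t c_k(x_1,\dots,x_k)\}\le\sup\{\frac1k\sum_i\f(x_i)\}=\sup\f$ (the last equality by taking all $x_i$ equal, and noting $\frac1k\sum_i\f(x_i)\le\sup\f$ always); for the reverse inequality, fix $x$ with $\f(x)$ close to $\sup\f$ and spread the $k$ points far apart along distinct directions so that $c_k\to0$, giving $\liminf_{t\to\infty}\frac{M_k(t\f)}{t}\ge\sup\f=M_1(\f)$.

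For item (ii), the chain of inequalities \eqref{ineqMk} should follow from a "marginalization by sending a point to infinity" argument, exactly in the spirit of Lemma \ref{troncation} and the proof of Proposition \ref{cbzero}. To compare $M_k\big(\frac{k\f}{N}\big)$ with $M_{k+1}\big(\frac{(k+1)\f}{N}\big)$: given $(x_1,\dots,x_k)$ nearly optimal for $M_k\big(\frac{k\f}{N}\big)$, append a $(k+1)$-st point $y$ with $|y|\to\infty$; then $\f(y)\to0$ because $\f\in C_0$, and all the new interaction terms $1/|x_i-y|$ in $c_{k+1}$ tend to $0$, so $\frac1{k+1}\sum_{i=1}^{k+1}\frac{(k+1)\f}{N}(x_i)-c_{k+1}=\frac1N\sum_{i=1}^k\f(x_i)+\frac{\f(y)}N-c_{k+1}(x_1,\dots,x_k,y)\to\frac1N\sum_{i=1}^k\f(x_i)-c_k(x_1,\dots,x_k)$, which is $\ge M_k\big(\frac{k\f}{N}\big)-\eps$. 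Taking the supremum gives $M_{k+1}\big(\frac{(k+1)\f}{N}\big)\ge M_k\big(\frac{k\f}{N}\big)$. The first inequality $M_1\big(\frac{\f}{N}\big)\le M_2\big(\frac{2\f}{N}\big)$ is the same argument with $k=1$ (and $M_1(\cdot)=\sup(\cdot)$, $c_1\equiv0$), and the last term $M_N(\f)$ is just the case $k=N$.

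For item (iii), the identity $M_k(\psi)=M_k(\psi_+)$ should follow from two observations. First, $M_k(\psi)\le M_k(\psi_+)$ since $\psi\le\psi_+$ pointwise and $M_k$ is monotone (again because it is a sup of functionals each monotone in $\f$). Second, for the reverse inequality, given a near-optimal configuration $(x_1,\dots,x_k)$ for $M_k(\psi_+)$, replace each point $x_i$ at which $\psi(x_i)<0$ by a point $y_i$ sent to infinity along a distinct direction: then $\psi_+(y_i)=\psi(y_i)\to0=\psi_+(x_i)$ is replaced by something no larger in the limit, actually we should move those points so that their $\psi$-value tends to $\limsup_{|x|\to\infty}\psi=\psi^\infty$; but for $\psi\in C_0$ one has $\psi^\infty=0=\psi_+$ at infinity, so $\frac1k\sum_i\psi_+(x_i)$ is approximated by $\frac1k\sum_i\psi(z_i)$ for a configuration $(z_i)$ (keeping the points with $\psi\ge0$ fixed, sending the others to infinity), while the cost $c_k$ only decreases in the limit. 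Hence $M_k(\psi)\ge M_k(\psi_+)-\eps$ for all $\eps>0$.

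I expect the only delicate point to be the careful bookkeeping in item (iii) (and to a lesser extent in (ii)): one must send several points simultaneously to infinity along mutually divergent directions, ensuring both that all mutual distances $|z_i-z_j|\to\infty$ and all distances $|z_i-x_j|\to\infty$ so that every affected term of $c_k$ vanishes, while the $\psi$-values of the relocated points tend to $0$ (using $\psi\in C_0$). This is routine but should be written explicitly, mirroring the sequence construction already used in the proof of Lemma \ref{troncation}. Everything else — convexity, Lipschitz continuity, monotonicity, the recession bound — is a direct manipulation of suprema of affine functionals and presents no real obstacle.
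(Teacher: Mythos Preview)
Your approach matches the paper's for all three items, with one correction needed in the recession formula of (i). For the lower bound $\liminf_{t\to\infty}\frac{M_k(t\f)}{t}\ge\sup\f$ you propose to ``fix $x$ with $\f(x)$ close to $\sup\f$ and spread the $k$ points far apart along distinct directions so that $c_k\to0$''. This does not work for $\f\in C_0$: sending points to infinity forces $\f(x_i)\to0$ for the moved coordinates, so $\frac1k\sum_i\f(x_i)$ cannot remain close to $\sup\f$ while $c_k\to0$. The paper's argument is simpler and avoids this tension: for \emph{any} fixed configuration $(x_1,\dots,x_k)$ with $c_k(x)<\infty$ one has
\[
\frac{M_k(t\f)}{t}\ \ge\ \frac1k\sum_{i=1}^k\f(x_i)-\frac{c_k(x)}{t},
\]
and letting $t\to\infty$ kills the last term no matter how large $c_k(x)$ is. Then optimizing over such configurations (e.g.\ taking all $x_i$ near a near-max point of $\f$, distinct so that $c_k<\infty$) gives $\sup\f$ as the lower bound. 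There is no need to make $c_k$ small.

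Items (ii) and (iii) are argued exactly as in the paper; the only cosmetic difference is that the paper handles (iii) via the compactification point $\omega$ (setting $y_i=\omega$ whenever $\psi(x_i)<0$ and using the extended cost $\tilde c_k$) rather than the explicit sequences going to infinity that you describe. Both formulations encode the same limiting argument.
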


\begin{proof}Let us start to prove {\it i)}. The convexity property is straightforward since $M_k$ is a supremum of affine continuous functions. On the other hand, for every $\f_1,\f_2$ in
$C_0$, we obviously have:
$$M_k(\f_2)\le M_k(\f_1)+\sup(\f_2-\f_1)\le M_k(\f_1)+\|\f_2-\f_1\|.$$
Let us now identify the recession function of $M_k$ that is
$$M_k^\infty(\f):= \lim_{t\to +\infty} \, \frac{M_k(t \f)}{t}\;.$$
As $M_k(\f)\le \sup \f$, we clearly have $M_k^\infty(\f)\le \sup \f$. On the other hand, for every $x=(x_i)\in(\R^d)^k$ and $t>0$ it holds
$$\frac{M_k(t\f)}{t}\ge\frac1k\sum_{i=1}^k\f(x_i)-\frac1t c_k(x)\;,$$
so that, after sending $t\to+\infty$ and then optimizing with respect to $x$, we get the converse inequality thus \eqref{recession}.

\med
We prove now {\it ii)}. Let $k\in\{1,2,\dots,N-1\}$ and $\f\in C_0$. Then, for every $x=(x_1,x_2,\dots,x_k,x_{k+1})\in(\R^d)^{k+1}$, it holds:
\begin{align*}M_{k+1}\Big(\frac{(k+1)\f}{N}\Big)
&\ge\frac1N\left(\sum_{i=1}^k \f(x_i)+\f(x_{k+1})\right)-c_{k+1}(x)\\
&\ge\frac1k\left(\sum_{i=1}^k\frac{k\f(x_i)}{N}\right)-c_k(x_1,x_2,\dots,x_k)\;,
\end{align*}
where in the first line we use the definition \eqref{def:Mk}, while in the second line we send $x_{k+1}$ to infinity taking into account that $\f(\omega)=0$.
Finally, optimizing with respect to $x_1,x_2,\dots,x_k$ gives the desired inequality \eqref{ineqMk}.

\med
Let us finally prove {\it iii)}. The inequality $M_k(\psi) \le M_k(\psi_+)$ is trivial. To prove the converse inequality, we observe that for every $x_1,x_2,\dots x_k$ in $\R^d$, it holds
$$\frac1k\sum_{i=1}^k\psi_+(x_i)-c_k(x_1,\dots,x_k)\le\frac1k\sum_{i=1}^k\psi(y_i)-{\tilde c_k}(y_1,\dots,y_k)\le M_k(\psi)\;,$$
where $y_i= x_i$ whenever $\psi(x_i)\ge 0$ whereas $y_i=\omega$ otherwise, being ${\tilde c_k}$ the natural extension of $c_k$ to $(\R^d\cup\{\omega\})^k$. One readily checks that $c_k(x_1,\dots,x_k)\ge{\tilde c_k}(y_1,\dots,y_k)$ while $\sum_{i=1} ^k \psi_+(x_i)\le \sum_{i=1} ^k \psi(y_i)$ since $\psi(\omega)=0$.
\end{proof}

From now on, we will use for $\Cbar(\rho)$ (resp. for $\C_k(\rho_k)$)
given by \eqref{cdual} (resp.\eqref{Cextend}) the duality formulae rewritten in a condensed form as follows.
\begin{prop} \ For every $\rho\in\PP^-$, the following equalities hold:
\begin{align}\label{compCbar}
\Cbar(\rho)= \sup_{\f\in C_0}\left\{\int\f\,d\rho-M_N(\f)\right\}.\end{align}
In other words, $\Cbar$ is the Fenchel conjugate of $M_N$ in the duality between $C_0$ and the space of bounded measures. In addition, for every $k\le N$, we have
\be
\label{compCcalk}\C_k(\rho_k)=\sup_{\f\in C_0}\left\{\int\f\,d\rho_k-M_k(\f)\|\rho_k\|\right\}.
\ee
\end{prop}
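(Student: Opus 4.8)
The plan is to reduce both identities to the duality formula \eqref{cdual} of Theorem \ref{duality}, respectively the definition \eqref{Cextend} of $\C_k$, by carefully matching the two expressions via the truncation device of Lemma \ref{troncation} and the decomposition $\psi=\f+\kappa$ with $\f\in C_0$, $\kappa\in\R$. For \eqref{compCbar}, I would first observe that if $\psi\in\A$, writing $\psi=\f+\psi_\infty$ with $\f=\psi-\psi_\infty\in C_0$, the constraint defining $\A$ reads $\frac1N\sum_i\f(x_i)+\psi_\infty\le c(x)$ for all $x$; taking Dirac-type sequences sending all but finitely many $x_i$ to infinity (exactly as in the proof of Lemma \ref{troncation}) shows this is equivalent to $\psi_\infty\le c(x)-\frac1N\sum_i\f(x_i)$ for all $x$, i.e. to $\psi_\infty\le -M_N(\f)$. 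Hence for fixed $\f\in C_0$ the best admissible constant is $\kappa=-M_N(\f)$, and
$$\int\psi\,d\rho+(1-\|\rho\|)\psi_\infty=\int\f\,d\rho+\psi_\infty\le\int\f\,d\rho-M_N(\f),$$
with equality for the optimal choice. Taking the supremum over $\psi\in\A$ and then over $\f\in C_0$ gives \eqref{compCbar} directly from \eqref{cdual}. One must only note that $M_N(\f)$ is finite (which follows since $M_N(\f)\le\sup\f<+\infty$, as $c_N\ge0$), so the constant $\kappa=-M_N(\f)$ is a genuine real number and $\psi=\f-M_N(\f)\in C_0\oplus\R$ is admissible.

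For \eqref{compCcalk} the argument is a standard Kantorovich-type duality for the $k$-marginal transport problem \eqref{Cextend} with cost $c_k$ and common marginal $\rho_k$, but with the twist that $\|\rho_k\|$ need not equal $1$. I would write $\rho_k=\|\rho_k\|\nu_k$ with $\nu_k\in\PP$ (assuming $\rho_k\ne0$; the case $\rho_k=0$ is trivial since both sides vanish, using $M_k(0)=0$), so that $\C_k(\rho_k)=\|\rho_k\|\,\C_k(\nu_k)$ by $1$-homogeneity of the problem in the mass, where $\C_k(\nu_k)$ is the ordinary $k$-marginal optimal transport cost of the probability $\nu_k$. Classical Kantorovich duality on the Polish space $\R^d$ (or, more safely, on its compactification $X$ as used throughout the paper) gives
$$\C_k(\nu_k)=\sup_{\f\in C_0}\left\{\int\f\,d\nu_k-M_k(\f)\right\},$$
since the admissible dual potentials are exactly those $\f$ with $\frac1k\sum_i\f(x_i)\le c_k(x)$ up to an additive constant, and the optimal additive constant for a given $\f$ is $-M_k(\f)$ by the very definition \eqref{def:Mk}. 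Multiplying through by $\|\rho_k\|$ and using $\int\f\,d\rho_k=\|\rho_k\|\int\f\,d\nu_k$ yields \eqref{compCcalk}.

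The main obstacle is the rigorous justification of the Kantorovich duality for \eqref{compCcalk} in the non-compact setting with a possibly $+\infty$-valued cost $c_k$: one must ensure no duality gap. I would handle this by transporting the problem to the compact space $X=\R^d\cup\{\omega\}$ with the lower semicontinuous extended cost $\ctild$ restricted to the $k$-fold product (using $c_k(x)=\ctild(x_1,\dots,x_k,\omega,\dots,\omega)$ as in \eqref{def:ck}), where $c_k$ is bounded below (indeed nonnegative) and l.s.c., so that the standard duality theorem for multi-marginal optimal transport with l.s.c. cost on a compact metric space applies without gap; the potentials $\f$ can then be taken in $C(X)$ with $\f(\omega)=0$, i.e. in $C_0$, which is precisely the class appearing in \eqref{def:Mk} and \eqref{compCcalk}. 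The only remaining care is that, after passing to $X$, the marginal constraint forces all marginals to be $\nu_k$ on $\R^d$ with no mass at $\omega$, which is automatic since $\nu_k$ is already a probability on $\R^d$; this is routine and parallels the compactification arguments already carried out in Section~\ref{srelax}.
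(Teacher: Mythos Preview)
Your proof of \eqref{compCbar} is correct and is precisely the paper's argument: decompose $\psi\in\A$ as $\f+\psi_\infty$ with $\f\in C_0$, observe that the constraint defining $\A$ amounts to $\psi_\infty\le-M_N(\f)$, and optimize first over the constant. For \eqref{compCcalk} your normalization $\rho_k=\|\rho_k\|\nu_k$ and the homogeneity step also match the paper; the only difference is that the paper does not re-invoke Kantorovich duality on the compactification but simply applies \eqref{compCbar} itself with $N$ replaced by $k$ to the probability $\nu_k=\rho_k/\|\rho_k\|$ (on which $\Cbar$ and $C$ coincide), thereby sidestepping the duality-gap discussion you raise since that issue was already settled once and for all in the proof of Theorem~\ref{duality}.
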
 
\begin{proof}\ For \eqref{compCbar}, we use the first equality in \eqref{cdual} with the change of variables
 $\f= \psi-\psi_\infty$, taking into account that $\psi\in \A$ is equivalent to $\psi_\infty\le - M_N(\f)$.
For \eqref{compCcalk}, it is enough to apply  \eqref{compCbar} replacing $N$ by $k$ and $\rho$ by the probability
$ \frac{\rho}{\|\rho\|}$.
\end{proof}
By \eqref{plus}, it turns out that the supremum in \eqref{compCbar} and \eqref{compCcalk} are unchanged when they are restricted to nonnegative functions $\f\in C_0$. In particular an optimal potential (if it exists) or any maximizing sequence can be assumed to be nonnegative. 

\begin{theo}\label{prop:cns}
Let $\rho\in\PP-$ such that $\|\rho\|>1/N$. Let $\{\rho_k\}$ a decomposition such that 
$$\sum_{k=1}^N \frac{k}{N}\rho_k=\rho,\ \sum_{k=1}^N\|\rho_k\|\le1.$$
Then $\{\rho_k\}$ is optimal in \eqref{rhok} and $\f$ is optimal in \eqref{compCbar} (respectively $(\f_n)$ is a maximizing sequence) if and only if the three following conditions hold:
\begin{itemize}
\item [i)] \ $\ds\sum_{k=1}^N \|\rho_k\| =1$,
\item [ii)] \ For all $k$, $\ds \frac{k\f}{N} $ is optimal (resp. $\ds \frac{k\f_n}{N} $ is a maximizing sequence) in \eqref{compCcalk}
\item [iii)] \ $ M_k(\frac{k\, \f}{N})= M_N(\f)$ (resp. $ M_N(\f_n)\!-\!M_k(\frac{k\f_n}{N})\to 0$) holds whenever it exists $l\le k$ such that $\|\rho_{l}\|>0$.
\end{itemize}
\end{theo}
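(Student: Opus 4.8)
The plan is to express the gap between the primal quantity $\sum_{k=1}^N\C_k(\rho_k)$ and the dual quantity $\int\f\,d\rho-M_N(\f)$ as a sum of manifestly nonnegative terms, and then to read off i)--iii) by requiring each term to vanish (resp.\ to tend to $0$ along the maximizing sequence $(\f_n)$).

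First I would record, for every $\f\in C_0$, two elementary consequences of the chain \eqref{ineqMk}. Since $\f$ vanishes at infinity one has $\sup\f\ge0$, hence $0\le\tfrac1N\sup\f=M_1(\tfrac{\f}{N})\le M_k(\tfrac{k\f}{N})\le M_N(\f)$ for every $k$; in particular $M_N(\f)\ge0$. Moreover, since $\rho\in\PP^-$,
\[
\int\f\,d\rho\le\sup(\f)\,\|\rho\|=N\,M_1\Big(\tfrac{\f}{N}\Big)\,\|\rho\|\le N\,M_N(\f)\,\|\rho\|.
\]
These sign facts, together with Proposition \ref{cbzero} (which under the hypothesis $\|\rho\|>1/N$ gives $\Cbar(\rho)>0$), are exactly what will force condition i).

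Next, fix any decomposition $\{\rho_k\}$ admissible for \eqref{rhok} and any $\f\in C_0$. Using $\rho=\sum_k\tfrac{k}{N}\rho_k$, splitting $M_N(\f)=\sum_kM_N(\f)\|\rho_k\|+M_N(\f)\big(1-\sum_k\|\rho_k\|\big)$, and then, inside each $k$-summand, adding and subtracting $M_k(\tfrac{k\f}{N})\|\rho_k\|$ and $\C_k(\rho_k)$, one obtains the algebraic identity
\[
\sum_{k=1}^N\C_k(\rho_k)-\Big(\int\f\,d\rho-M_N(\f)\Big)=D_0(\f)+\sum_{k=1}^ND_k^{(1)}(\f)+\sum_{k=1}^ND_k^{(2)}(\f),
\]
where $D_0(\f)=M_N(\f)\big(1-\sum_k\|\rho_k\|\big)$, $D_k^{(1)}(\f)=\big(M_N(\f)-M_k(\tfrac{k\f}{N})\big)\|\rho_k\|$ and $D_k^{(2)}(\f)=\C_k(\rho_k)-\int\tfrac{k\f}{N}\,d\rho_k+M_k(\tfrac{k\f}{N})\|\rho_k\|$. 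Each of these terms is nonnegative: $D_0\ge0$ by the first paragraph and $\sum_k\|\rho_k\|\le1$, $D_k^{(1)}\ge0$ again by \eqref{ineqMk}, and $D_k^{(2)}\ge0$ by the definition \eqref{compCcalk} of $\C_k(\rho_k)$ as a supremum. In particular this reproves $\int\f\,d\rho-M_N(\f)\le\sum_k\C_k(\rho_k)$; together with \eqref{compCbar} and Theorem \ref{prop:cpartial} it shows that $\{\rho_k\}$ is optimal in \eqref{rhok} and $\f$ is optimal in \eqref{compCbar} if and only if the left-hand side of the identity vanishes, i.e.\ if and only if $D_0(\f)=0$ and $D_k^{(1)}(\f)=D_k^{(2)}(\f)=0$ for all $k$. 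For the ``respectively'' statement one instead asks each of these finitely many nonnegative quantities to tend to $0$, which is equivalent to the dual value $\int\f_n\,d\rho-M_N(\f_n)$ tending to $\sum_k\C_k(\rho_k)$, hence (using $\int\f\,d\rho-M_N(\f)\le\Cbar(\rho)\le\sum_k\C_k(\rho_k)$) both to $\Cbar(\rho)=\sum_k\C_k(\rho_k)$.

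It remains to match these vanishing conditions with i)--iii). The conditions $D_k^{(2)}(\f)=0$ for all $k$ are precisely ii). For iii): if $l\le k$ with $\|\rho_l\|>0$, then $D_l^{(1)}(\f)=0$ forces $M_l(\tfrac{l\f}{N})=M_N(\f)$, and the squeeze $M_l(\tfrac{l\f}{N})\le M_k(\tfrac{k\f}{N})\le M_N(\f)$ from \eqref{ineqMk} then gives $M_k(\tfrac{k\f}{N})=M_N(\f)$; conversely iii) with $l=k$ yields $D_k^{(1)}(\f)=0$ when $\|\rho_k\|>0$, and $D_k^{(1)}(\f)=0$ trivially otherwise. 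Finally $D_0(\f)=0$ means $\sum_k\|\rho_k\|=1$ or $M_N(\f)=0$; but if $M_N(\f)=0$ then, by the first paragraph, $\int\f\,d\rho-M_N(\f)=\int\f\,d\rho\le N\,M_N(\f)\,\|\rho\|=0<\Cbar(\rho)$, so $\f$ cannot be optimal in \eqref{compCbar}, and likewise $M_N(\f_n)\to0$ would be incompatible with $(\f_n)$ being maximizing; hence, under $\|\rho\|>1/N$, necessarily $\sum_k\|\rho_k\|=1$, which is i), while the converse (that i) gives $D_0(\f)=0$) is immediate. The ``respectively'' versions of ii) and iii) follow by the same bookkeeping, using that a finite sum of nonnegative quantities tends to $0$ iff each summand does. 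The main (and essentially only non-routine) step is this last one: the use of the asymptotic bound $\int\f\,d\rho\le N\,M_N(\f)\,\|\rho\|$ and of $\Cbar(\rho)>0$ to exclude $M_N(\f)=0$ (resp.\ $M_N(\f_n)\to0$) when $\|\rho\|>1/N$; everything else is a rearrangement of the duality formulae \eqref{compCbar}, \eqref{compCcalk} and of the monotonicity \eqref{ineqMk}.
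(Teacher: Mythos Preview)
Your proof is correct and follows essentially the same route as the paper: you decompose the primal--dual gap into the three nonnegative blocks $D_0$, $D_k^{(1)}$, $D_k^{(2)}$ (exactly the identity \eqref{identity} in the paper), observe nonnegativity via \eqref{compCcalk} and \eqref{ineqMk}, and rule out $M_N(\f)=0$ (resp.\ $M_N(\f_n)\to0$) via the bound $\int\f\,d\rho-M_N(\f)\le(N\|\rho\|-1)M_N(\f)$ together with $\Cbar(\rho)>0$, which is precisely the content of Remark~\ref{nonnul}. Your treatment of the ``respectively'' case and of the squeeze from $l$ to $k\ge l$ is somewhat more explicit than in the paper, but the argument is the same.
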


\begin{proof} For any admissible pair $(\{\rho_k\},\f)$, we have 
$$\sum_k\C_k(\rho_k)\ge\int\f\,d\rho-M_N(\f).$$
Thus the optimality arises as soon the previous inequality becomes an equality.
We compute:
\begin{align}
\sum_k\C_k(\rho_k)-\left(\int\f\,d\rho-M_N(\f)\right)=
&\sum_k\nonumber\bigg(\C_k(\rho_k)-\int\frac{k\f}{N}\,d\rho_k+M_k(\frac{k\f}{N})\|\rho_k\|\bigg)\label{identity}\\
&+\sum_k\left(M_N(\f)-M_k(\frac{k\f}{N})\right)\|\rho_k\|\\
&+M_N(\f)\left(1-\sum_k\|\rho_k\|\right).\nonumber
\end{align}
By \eqref{compCcalk} and \eqref{ineqMk}, we discover that the right hand side of \eqref{identity} consists of the sum of three nonnegative terms.
Thus the left hand side vanishes if and only if all these three terms vanish that is to say i), ii) and iii) hold simultaneously. Note that for iii) we use the monotonicity property \eqref{ineqMk} allowing to pass from index $l$ to any $k\ge l$ and Remark \ref{nonnul} where we noticed that $M_N(\f)>0$ (resp. $\liminf_{n\to\infty} M_N(\f_n)>0$ in case of a maximizing sequence $(\f_n)$).
\end{proof}

\begin{rema} \label{nonnul} Any optimal $\f$ satisfies $M_N(\f)>0$ since otherwise, by \eqref{def:Mk}, the inequalities $\sup \f=N M_1(\frac{\f}{N})\le N M_N(\f)\le0$ would imply that 
$$\Cbar(\rho)=\int\f\,d\rho-M_N(\f)\le(N\|\rho\|-1)M_N(\f)=0$$
which is excluded since $\Cbar(\rho)>0$ if $\|\rho\|>1/N$ (see Proposition \ref{cbzero}). On the same way, if $(\f_n)$ is an optimal sequence, we infer that $\liminf_{n\to\infty} M_N(\f_n)>0$.
\end{rema}
\begin{rema} \label{rhoknonnul}
Note that, in Theorem \ref{prop:cns}, the condition $\sum_{k=1}^N \|\rho_k\| =1$ holds for any $\{\rho_k\}$ optimal in \eqref{compCbar} since  there always exists a maximizing sequence $(\f_n)$ for the dual problem.
Next we observe that, if $\overline{ k}$ denotes the integer part of $N\|\rho\|$, then
the equality $N \|\rho\|= \sum_{k=1}^N k \|\rho_k\|$ and $\sum_{k=1}^N \|\rho_k\| = 1$ imply that there exist at least two integers $l_- \leq \overline{k} \leq l_+$ such that $\|\rho_{l_\pm}\|>0$. Accordingly the assertion iii) of Theorem \ref{prop:cns} includes all values $k > N\|\rho\|-1$. 
\end{rema}

\section{Existence of a Lipschitz potential for the relaxed cost}\label{dualexist}

The main result of this Section is the existence of an optimal potential for the relaxed cost $\Cbar(\rho)$. 
Such an existence result is already known for $\|\rho\|=1$ under a suitable low concentration assumption on 
the probability $\rho$ (see \cite{bcdp17}, Theorem 3.6 or \cite{cdms18} for the sharp constant). More precisely, for every $\rho\in \PP^-$, we define
$$ K(\rho)=\sup\big\{\rho(\{x\})\ :\ x\in\R^d\big\}.$$
Then if $K(\rho)< \frac1{N}$, it is shown in \cite{bcdp17,cdms18} that there exists an optimal continuous bounded Lipschitz potential $u\in \B$. As a preamble, we
 prove that in fact this optimal potential $u$ can be chosen in the subclass $\A$, i.e. $u$ is constant at infinity.
 
\begin{prop}
Let $\rho\in\PP$ and let $u \in \B$ be an upper bounded optimal potential for $\rho$. Then  ${\tilde u}:= \max\{u, u^\infty\}$ is still an optimal potential for $\rho$.
In particular if $u$ is continuous (resp. Lipschitz continuous),
the optimal potential ${\tilde u}$ belongs to $\A$ (resp. to $\A\cap Lip$).
\end{prop}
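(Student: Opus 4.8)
The plan is to read everything off from the fact that $\rho\in\PP$ has full mass, so that the boundary term in \eqref{cdual} disappears: optimality of $u\in\B$ for $\rho$ means exactly $\int u\,d\rho=\Cbar(\rho)$, and \eqref{cdual} moreover yields $\int\psi\,d\rho\le\Cbar(\rho)$ for every $\psi\in\B$. Since $u$ is upper bounded, $u^\infty=\limsup_{|x|\to\infty}u(x)$ is a finite real number, so $\tilde u=\max\{u,u^\infty\}$ is a well defined lower semicontinuous function with $\inf\tilde u\ge\inf u>-\infty$. Applying Lemma \ref{troncation} to $u$ with the (borderline admissible) choice $\lambda=u^\infty=\psi^\infty$ then yields $\tilde u\in\B$.

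Next I would establish optimality of $\tilde u$. Since $\tilde u\ge u$ pointwise and $\|\rho\|=1$, we get $\int\tilde u\,d\rho\ge\int u\,d\rho=\Cbar(\rho)$; on the other hand $\tilde u\in\B$ forces $\int\tilde u\,d\rho\le\Cbar(\rho)$ by \eqref{cdual}. Hence $\int\tilde u\,d\rho=\Cbar(\rho)$ and $\tilde u$ is an optimal potential for $\rho$.

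Finally I would analyse the behaviour of $\tilde u$ at infinity. From $\tilde u\ge u^\infty$ everywhere we obtain $\liminf_{|x|\to\infty}\tilde u(x)\ge u^\infty$, while $\sup_{|x|\ge R}\tilde u(x)\le\max\{\sup_{|x|\ge R}u(x),\,u^\infty\}$ gives $\limsup_{|x|\to\infty}\tilde u(x)\le u^\infty$; thus $\tilde u$ admits the finite limit $u^\infty$ at infinity. If $u$ is continuous then so is $\tilde u=\max\{u,u^\infty\}$, and writing $\tilde u=(\tilde u-u^\infty)+u^\infty$ with $\tilde u-u^\infty\in C_0$ shows $\tilde u\in C_0\oplus\R$; combined with $\tilde u\in\B$, this gives $\tilde u\in\A$. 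If moreover $u$ is Lipschitz, then $\tilde u=\max\{u,u^\infty\}$ is Lipschitz with $Lip(\tilde u)\le Lip(u)$ (the maximum of a Lipschitz function and a constant), so $\tilde u\in\A\cap Lip$. The argument is essentially bookkeeping; the only point deserving care is the truncation step, namely that the hypothesis that $u$ is upper bounded is precisely what makes the truncation level $u^\infty$ finite, and that $\lambda=u^\infty$ lies in the admissible range of Lemma \ref{troncation}. I do not expect a genuine obstacle beyond this.
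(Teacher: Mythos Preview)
Your proof is correct and follows exactly the same approach as the paper, which simply states that ``it is enough to check that $\tilde u$ is admissible, which follows from Lemma~\ref{troncation}.'' You have spelled out in full the details that the paper leaves implicit: the admissibility via Lemma~\ref{troncation} with $\lambda=u^\infty$, the optimality via $\tilde u\ge u$ together with $\|\rho\|=1$, and the verification that $\tilde u$ has a limit at infinity so as to land in $\A$ (resp.\ $\A\cap Lip$).
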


\begin{proof} It is enough to check that $\tilde u$ is admissible which follows from Lemma \ref{troncation}.
\end{proof}

Now we are going to extend this existence and regularity result to sub-probabilities under the following assumption on $\rho$:
\be\label{finitecost}
\|\rho\|<1,\qquad\exists\delta>0\ :\ \Cbar((1+\delta)\rho)<+\infty
\ee

\begin{theo}\label{existencedual}
Let $\rho\in\PP^-$ satisfying \eqref{finitecost} for a given $\delta>0$. Then there exist a Lipschitz optimal potential $u\in C_0\oplus\R$ solving \eqref{cdual}. The Lipschitz constant of $u$ depends only on $\delta$. Furthermore any solution to \eqref{cdual} coincides with a Lipschitz one on the support of $\rho$.
\end{theo}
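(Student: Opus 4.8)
The plan is the following. First one disposes of the trivial case $\|\rho\|\le1/N$: there $\Cbar(\rho)=0$ by Proposition \ref{cbzero} and $u\equiv0\in\A$ is a $0$-Lipschitz optimal potential, so from now on assume $1/N<\|\rho\|<1$. Since by Corollary \ref{smallermass} the hypothesis \eqref{finitecost} is preserved when $\delta$ is decreased, one may also assume $(1+\delta)\|\rho\|\le1$ and set $\Lambda:=\Cbar((1+\delta)\rho)<+\infty$. The first real step is an a priori bound: by Lemma \ref{troncation} and Remark \ref{u>uinfty} one may restrict \eqref{cdual} to admissible $\psi\ge\psi_\infty$ without loss, and then, letting $x_1$ be arbitrary while $x_2,\dots,x_N$ escape to infinity in distinct directions in the constraint defining $\A$, one gets $\psi(x_1)\le-(N-1)\psi_\infty$, hence $\psi_\infty\le0$, $\inf\psi=\psi_\infty$ and $\mathrm{osc}(\psi)\le N|\psi_\infty|$; while testing $\psi$ as a competitor in \eqref{cdual} for the measure $(1+\delta)\rho$, together with its ($\eta$-)optimality for $\rho$, a short computation yields $-\delta\,\psi_\infty\le\Lambda-(1+\delta)\Cbar(\rho)+(1+\delta)\eta$. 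Consequently, along any maximizing sequence and for any optimal potential, $|\psi_\infty|$ — hence also $\|\psi\|_\infty\le N|\psi_\infty|$ — is bounded by a constant $K_\delta<+\infty$ depending only on $\delta$; moreover $|\psi_\infty|\ge\Cbar(\rho)/(N\|\rho\|-1)>0$.

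The heart of the argument is carried out in the compact picture of Remark \ref{compactpart}. One first produces an optimal $u^*\in\mathcal{S}(X)$, normalized so that $u^*\ge u^*(\omega)$ and hence bounded by the a priori estimate, together with an optimal symmetric plan $\Ptild\in\Pi(\rhotild)$ (recall $\Ctild$ is an attained minimum), which since $\Cbar(\rho)<+\infty$ gives no mass to $\{x_i=x_j\in\R^d,\ i\ne j\}$. Then one introduces the one-coordinate transform
\[
\hat u(x):=\inf\Big\{N\,\ctild(x,x_2,\dots,x_N)-\sum_{i=2}^N u^*(x_i)\ :\ (x_2,\dots,x_N)\in X^{N-1}\Big\}\,,
\]
which is finite, obeys $u^*(\omega)\le\hat u\le-(N-1)u^*(\omega)$ and $\hat u\ge u^*$ by admissibility of $u^*$. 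Complementary slackness, $\frac1N\sum_i u^*(x_i)=\ctild(x)$ for $\Ptild$-a.e.\ $x$, disintegrated over the first marginal $\rhotild$ of $\Ptild$, forces $\hat u=u^*$ $\rhotild$-a.e.; hence $\hat u$ satisfies the pointwise constraint for $\rhotild^{N\otimes}$-a.e.\ $x\in X^N$, and Proposition \ref{rhopp} gives $\int_X\hat u\,d\rhotild\le\Cbar(\rho)$, while $\hat u\ge u^*$ gives the reverse inequality, so $\hat u$ is again optimal. The decisive gain is that $\hat u$ is Lipschitz with a controlled constant: set $r_0:=(2|u^*(\omega)|)^{-1}\in(0,+\infty)$; an $\eta$-near-minimizer (with $0<\eta<N|u^*(\omega)|$) in the definition of $\hat u(x)$ cannot have any $x_j\in\R^d$ at distance $r<r_0$ from $x$, because replacing such $x_j$ by $\omega$ would change the bracket by at most $-N/r+\mathrm{osc}(u^*)\le-N/r+N|u^*(\omega)|<-\eta$, a contradiction; so along near-minimizers all finite coordinates stay at distance $\ge r_0$ from $x$, and since $y\mapsto N\ctild(y,x_2,\dots,x_N)$ is then $\tfrac{4N(N-1)}{r_0^2}$-Lipschitz on $B(x,r_0/2)$, one obtains $\mathrm{Lip}(\hat u)\le\tfrac{4N(N-1)}{r_0^2}\le16N(N-1)K_\delta^2$, a bound depending only on $\delta$.

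Finally, $\hat u\in C_0\oplus\R$: one has $\hat u(x)\ge\hat u(\omega)$ for all $x$ (as $\ctild\ge0$), while testing the definition of $\hat u(x)$ against a near-minimizer for $\hat u(\omega)$ — whose finite coordinates contribute terms $N/|x-x_i|\to0$ as $|x|\to\infty$ — gives $\limsup_{|x|\to\infty}\hat u(x)\le\hat u(\omega)$, so $\hat u_\infty=u^*(\omega)$ and $\hat u$ is a globally Lipschitz solution of \eqref{cdual}; and if $\psi\in\A$ is any solution of \eqref{cdual}, the same complementary-slackness argument against $\Ptild$ shows that $\psi$ agrees $\rhotild$-a.e.\ — hence, both functions being continuous, on $\spt(\rho)$ — with its one-coordinate transform $\hat\psi$, which is Lipschitz by the preceding step. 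I expect the main obstacle to be the middle paragraph: securing an optimal potential $u^*$ in a class regular enough to run complementary slackness (l.s.c.\ and bounded), and above all coping with the fact that the one-coordinate transform $\hat u$ need not be globally admissible — it is exactly here that Proposition \ref{rhopp} is indispensable, reducing the admissibility of $\hat u$ to the constraint holding only $\rhotild^{N\otimes}$-a.e., where $\hat u$ coincides with the admissible $u^*$. The uniform Lipschitz bound then hinges on the collision-avoidance of near-minimizers, which couples the Coulomb blow-up of $\ctild$ on the diagonal with the $L^\infty$ estimate of the first step and the freedom to move a colliding coordinate to $\omega$ at no cost.
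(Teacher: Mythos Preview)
Your argument has one genuine gap that undermines the whole scheme: you assert that ``one first produces an optimal $u^*\in\mathcal S(X)$'' but never explain how.  This is not a technicality --- the \emph{existence} of an optimal dual potential is precisely the difficult half of the theorem, and nothing in the paper prior to Theorem~\ref{existencedual} provides it.  Duality (Theorem~\ref{duality}, Proposition~\ref{rhopp}) gives only that the supremum equals $\Cbar(\rho)$, not that it is attained; and the known existence results quoted at the start of Section~\ref{dualexist} require $\|\rho\|=1$ together with a concentration bound, which your compactified $\tilde\rho$ violates (it has an atom of mass $1-\|\rho\|$ at $\omega$, possibly much larger than $1/N$).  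Your subsequent steps --- complementary slackness to get $\hat u=u^*$ $\tilde\rho$-a.e., then Proposition~\ref{rhopp} to certify $\hat u$ as optimal --- all rest on exact optimality of $u^*$; with a merely near-optimal $u_n$ you get only $\hat u_n-u_n\to0$ in $L^1(\tilde\rho)$, and then $\hat u_n$ is neither admissible in $\A$ nor known to satisfy the constraint $\tilde\rho^{N\otimes}$-a.e., so Proposition~\ref{rhopp} does not apply.  You yourself flag this paragraph as the crux, but the obstacle is not just ``regularity of $u^*$'': it is that $u^*$ may simply not exist in $\mathcal S(X)$ without further work.

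The paper circumvents this by never assuming an optimizer.  Starting from an $\eps$-optimal $\f_\eps\in C_0$, it iterates the convex combination $u_{n+1}=\frac{N-1}{N}u_n+\frac1N\hat u_n$ (not the bare transform $\hat u_n$).  Lemma~\ref{propfhat} shows this combination \emph{stays admissible} with $M_N(u_{n+1})=M_{N-1}(\tfrac{N-1}{N}u_n)$, and that the energy increases by $(1-\|\rho\|)\Delta_N(u_n)$ at each step; summing forces $\Delta_N(u_n)\to0$, which together with \eqref{finfty=0} is what pins down the behaviour at infinity and yields an optimal limit in $C_0\oplus\R$.  Your single-shot transform $\hat u$ is more elegant \emph{once existence is known}, and indeed the paper uses essentially your argument for the final claim (any optimal $\psi$ coincides with the Lipschitz $\hat\psi$ on $\spt\rho$); but for the existence itself the iterative mechanism is doing real work that your proof skips.
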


\begin{rema} The finiteness condition in \eqref{finitecost} is fulfilled in particular if the concentration satisfies $K(\rho) <\frac1N$. Indeed, we may chose $\delta$ and a smooth density measure $\nu$ so that $(1+\delta) \rho+\nu$ is a probability measure with a concentration still lower than $\frac1{N}$, thus with finite cost. By applying Corollary \ref{smallermass}
we infer that
$$\Cbar\big((1+\delta)\rho\big)\le\Cbar\big((1+\delta)\rho+\nu\big)=C\big((1+\delta)\rho+\nu\big)<+\infty.$$
\end{rema}

The proof of Theorem \ref{existencedual} is quite involved and is given in the remaining part of this Section. First we need to fix some notations and give some preparatory results which are collected in the next subsection.

\subsection{Preliminary results}

We recall the expression \eqref{compCbar} for $\Cbar(\rho)$ that we are using. The existence of an optimal potential $u$ amounts to find a function $\f\in C_0$ such that
\begin{equation}\label{achievement}
\Cbar(\rho) = I_N(\f) \quad \text{where}\quad I_N(\f): =\int\f\,d\rho-M_N(\f)
\end{equation}
where we recall
$$M_N(\f)=\sup\left\{\frac1N\sum_{i=1}^N\f(x_i)-c_N(x_1,\dots,x_N)\ :\ x_i\in\R^d\right\}$$
We notice that the definition of $M_N(\f)$ above can be obviously extended to any upper bounded Borel function. Accordingly we have very useful properties which are given in the two next Lemmas.

\begin{lemm} \label{DeltaN} 
Let $\f:\R^d\to\R$ an upper bounded Borel function and set $\f^\infty:=\limsup_{|x|\to+\infty}\f(x)$. Then the following inequalities hold:
\begin{equation}\label{supestiMN}
\frac1{N} \sup \f + \frac{N-1}{N}\f^\infty \le M_N(\f)\le \sup \f.
\end{equation}
\be\label{finfty=0}
\frac1{N} \f^\infty +  M_{N-1}\Big(\frac{N-1}{N}\f\Big) \le  M_N(\f).
\ee
\end{lemm}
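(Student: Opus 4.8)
\textbf{Proof strategy for Lemma \ref{DeltaN}.}

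The plan is to prove both displays directly from the definition \eqref{def:Mk} of $M_N$, by making clever choices of the competitor points $x_1,\dots,x_N$ and exploiting the fact that $c_N(x_1,\dots,x_k,\omega,\dots,\omega) = c_k(x_1,\dots,x_k)$ together with the vanishing of all interaction terms $1/|x_i - x_j|$ as points are sent to infinity in pairwise distinct directions. The upper bound $M_N(\f) \le \sup\f$ in \eqref{supestiMN} is immediate since $c_N \ge 0$, so each term $\frac1N\sum_i \f(x_i) - c_N(x) \le \frac1N\sum_i \f(x_i) \le \sup\f$.

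For the lower bound in \eqref{supestiMN}, I would fix a point $x_1 = \bar x$ with $\f(\bar x)$ close to $\sup\f$, and then choose $x_2,\dots,x_N$ along $N-1$ pairwise distinct rays tending to infinity so that simultaneously each $\f(x_i) \to \f^\infty$ (up to $\eps$) and all the pairwise interaction terms $1/|x_i - x_j|$ (including those with $x_1$) tend to $0$. Then
$$
M_N(\f) \ge \lim\Big(\frac1N\f(x_1) + \frac1N\sum_{i=2}^N \f(x_i) - c_N(x)\Big) \ge \frac1N\sup\f + \frac{N-1}{N}\f^\infty - \eps\,,
$$
and letting $\eps \to 0$ gives \eqref{supestiMN}. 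For \eqref{finfty=0}, I would instead take an $(N-1)$-tuple $(x_1,\dots,x_{N-1})$ nearly optimal for $M_{N-1}\big(\frac{N-1}{N}\f\big)$, and append a single point $x_N$ sent to infinity along a direction chosen so that $\f(x_N) \ge \f^\infty - \eps$ and $1/|x_N - x_i| \to 0$ for every $i \le N-1$; in the limit $c_N(x_1,\dots,x_N) \to c_{N-1}(x_1,\dots,x_{N-1})$, hence
$$
M_N(\f) \ge \frac1N\f^\infty + \Big(\frac1N\sum_{i=1}^{N-1}\f(x_i) - c_{N-1}(x_1,\dots,x_{N-1})\Big) \ge \frac1N\f^\infty + M_{N-1}\Big(\frac{N-1}{N}\f\Big) - \eps\,,
$$
and again we let $\eps \to 0$.

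The only mild subtlety — and the point I would be careful about — is justifying that one can pick the escaping points to satisfy all the requirements at once: the $\limsup$ definition of $\f^\infty$ guarantees, for each $\eps$, a point with $|x| \ge R$ and $\f(x) \ge \f^\infty - \eps$ for every $R$, so one can extract such points with norms growing as fast as desired and lying in prescribed thin cones around fixed distinct directions $\xi_2,\dots,\xi_N$ on the unit sphere, which makes all pairwise distances (and the distance to the fixed $\bar x$) diverge and kills every Coulomb term. This is exactly the mechanism already used in the proof of Lemma \ref{troncation} and in Proposition \ref{cbzero}, so no genuinely new difficulty arises; it is essentially a bookkeeping argument. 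I do not expect any real obstacle here, just the need to state the simultaneous escape carefully.
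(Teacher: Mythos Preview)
Your strategy is correct and matches the paper's proof essentially line for line: the paper also writes
\[
M_N(\f)\ge\frac{\f(x_1)}{N}-\sum_{j=2}^N\frac1{|x_1-x_j|}+\Big(\frac1{N}\sum_{i=2}^N\f(x_i)-c_{N-1}(x_2,\dots,x_N)\Big)
\]
and then sends $x_2,\dots,x_N$ to infinity (for \eqref{supestiMN}) or sends $x_1$ to infinity and optimizes in the remaining variables (for \eqref{finfty=0}).

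One small correction to your ``mild subtlety'': the $\limsup$ does \emph{not} allow you to place the escaping points in prescribed thin cones around fixed directions $\xi_2,\dots,\xi_N$ --- all points realizing $\f^\infty$ could lie along a single ray. The right (and simpler) fix is to pick the escaping points from a single sequence $(y_n)$ with $|y_n|\to\infty$ and $\f(y_n)\ge\f^\infty-\eps$, choosing indices so that $|y_{n_{i+1}}|\ge 2|y_{n_i}}|$; then $|y_{n_i}-y_{n_j}|\ge |y_{n_{\min(i,j)}}|\to\infty$ by the reverse triangle inequality, and all Coulomb terms vanish. This is exactly the device implicit in the proof of Lemma~\ref{troncation}, so your reference to that lemma is apt, just not via the cone mechanism.
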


\begin{proof}
The inequality $M_N(\f)\le\sup\f$ is trivial. On the other hand, it holds for every $x=(x_i)$ in $(\R^d)^N$:
$$M_N(\f)\ge\frac{\f(x_1)}{N}+\sum_{j=2}^N\frac1{|x_1-x_j|}+\left(\frac1{N}\sum_{i=2}^N\f(x_i)-c_{N-1}(x_2,\dots,x_N)\right)$$
By sending all points $x_i$ (with $i\ge2$) to infinity and then taking the supremum in $x_1$, we deduce the first inequality in \eqref{supestiMN}. On the opposite, if we send first $x_1$ to infinity and then optimize with repect to all $x_i$ with $i\ge2$, we get \eqref{finfty=0}
\end{proof}

A consequence of \eqref{supestiMN} is that for elements $\f\in C_0^+$, $M_N(\f)$ is equivalent to the uniform norm. In the sequel we will denote
\be\label{def:DeltaN}
\Delta_N(\f):=M_N(\f)-M_{N-1}\Big(\frac{N-1}{N}\f\Big).
\ee
By \eqref{ineqMk}, we have $\Delta_N(\f)\ge 0$ for every $\f\in C_0$. Now if $\f$ is   a nonnegative
element of $C_b$, a every useful recipe in order to show that $\f$ belongs to $C_0$ is to verify that $\Delta_N(\f)=0$ (just by applying  by \eqref{finfty=0}). 

\begin{lemm} \label{familyMN} 
Let $\f_n:\R^d\to\R$ be a family of Borel functions such that
$$\f_{n+1}\ge\f_n,\qquad\f:=\sup_n\f_n\le C,$$
where $C$ is a suitable constant. Then, for every $k\in\N$, it holds
$$\lim_{n\to\infty} M_k(\f_n)=\sup_n M_k(\f_n)=M_k(\f).$$
\end{lemm}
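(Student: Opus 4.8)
The plan is to prove $\lim_n M_k(\f_n) = \sup_n M_k(\f_n) = M_k(\f)$ by a monotone convergence (Beppo Levi) argument applied inside the supremum defining $M_k$. First I would observe that since $\f_{n+1} \ge \f_n$, the monotonicity property of $M_k$ established in the proof of Lemma \ref{prop:Mk}.i) — namely $M_k(\f_2) \le M_k(\f_1) + \sup(\f_2 - \f_1)$, and more directly the plain monotonicity $\f_1 \le \f_2 \Rightarrow M_k(\f_1) \le M_k(\f_2)$ — gives that the sequence $(M_k(\f_n))_n$ is nondecreasing, hence $\lim_n M_k(\f_n) = \sup_n M_k(\f_n)$ exists in $(-\infty, +\infty]$, and is bounded above by $M_k(\f) \le \sup \f \le C$ since each $\f_n \le \f$. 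So the only nontrivial inequality is $M_k(\f) \le \sup_n M_k(\f_n)$.

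For this reverse inequality, fix $\eps > 0$ and choose a point $x = (x_1,\dots,x_k) \in (\R^d)^k$ such that
$$\frac1k \sum_{i=1}^k \f(x_i) - c_k(x_1,\dots,x_k) \ge M_k(\f) - \eps,$$
which in particular forces $c_k(x_1,\dots,x_k) < +\infty$. Since $\f(x_i) = \sup_n \f_n(x_i)$ and the $\f_n(x_i)$ increase, for each fixed $i$ we have $\f_n(x_i) \to \f(x_i)$; as there are only finitely many indices $i = 1,\dots,k$, for $n$ large enough we get $\frac1k \sum_{i=1}^k \f_n(x_i) \ge \frac1k \sum_{i=1}^k \f(x_i) - \eps$. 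Combining,
$$M_k(\f_n) \ge \frac1k \sum_{i=1}^k \f_n(x_i) - c_k(x_1,\dots,x_k) \ge M_k(\f) - 2\eps$$
for all sufficiently large $n$, hence $\sup_n M_k(\f_n) \ge M_k(\f) - 2\eps$. Letting $\eps \to 0$ concludes.

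There is essentially no serious obstacle here: the argument is a textbook "sup commutes with sup" observation once one notices that the inner supremum over $x$ is taken over a fixed finite-dimensional set and the dependence on $\f$ through the finitely many values $\f(x_i)$ is monotone and continuous under increasing limits. The only mild point of care is to make sure the selected near-optimal $x$ has finite cost $c_k(x)$ so that the subtraction is legitimate, which is automatic from the bound $M_k(\f) - \eps \le \frac1k\sum \f(x_i) - c_k(x) \le \sup\f - c_k(x)$; and to handle the case $M_k(\f) = +\infty$ (excluded anyway since $\f \le C$ gives $M_k(\f) \le C$), as well as the bookkeeping that the finiteness of the index set $\{1,\dots,k\}$ is what lets us pass from pointwise to simultaneous convergence of the $\f_n(x_i)$.
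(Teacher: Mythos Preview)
Your proof is correct and follows essentially the same approach as the paper: monotonicity of $M_k$ gives $\lim_n M_k(\f_n)\le M_k(\f)$, and for the reverse inequality one fixes a (near-)optimal point $x\in(\R^d)^k$ and uses pointwise convergence $\f_n(x_i)\to\f(x_i)$ at the finitely many coordinates. The only cosmetic difference is that the paper fixes an arbitrary $x$, passes to the limit, and then optimizes over $x$, whereas you pick a near-optimal $x$ with an explicit $\eps$; these are equivalent formulations of the same argument.
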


\begin{proof}
Clearly $M_k(\f_{n})\le M_k(\f_{n+1})\le M_k(\f)$, so that $\lim_n M_k(\f_n)\le M_k(\f)$. On the other hand, as $\f_n\to\f$ pointwise, we have for every $x=(x_i)\in(\R^d)^k$:
$$\liminf_n M_k(\f_n)\ge\lim_n\frac1k\sum_{i=1}^k\f_n(x_i)-C_k(x)=\sum_{i=1}^k\f(x_i)-C_k(x),$$
hence $\liminf_n M_k(\f_n)\ge M_k(\f)$ by optimizing with respect to $x$.
\end{proof}

Next, for every upper bounded Borel function $\f$, we introduce the new function:
$$ [M_N\f](x):=\sup \left\{ \frac1{N} \sum_{i=1} ^N \f(x_i) - c_N(x_1,\dots,x_N)\ :\ x_1=x, (x_2,\dots,x_N)\in(\R^d)^{N-1}\right\}.$$
By construction, it holds $M_N(\f) = \sup \{[M_N\f](x)\ :\ x\in\R^d\}$. It turns out that, for $\f\in C_0$, the limit of $[M_N\f]$ at infinity is nothing else but $M_{N-1}\Big(\frac{N\!-\!1}{N}\f\Big)$. A key argument in the proof of Theorem \ref{existencedual} is the introduction of the regularization of $\f$ defined as follows:
\begin{equation}\label{def:hatf}
\hat\f (x) \ =\ \f(x) +\ N\,\left( M_{N-1}\Big(\frac{N\!-\!1}{N}\f\Big) - [M_N\f](x)\right)
\end{equation}
It is easy to check that $\hat\f $ can be rewriten in the following form 
\be\label{def2:hatf}
\hat\f(x)=\inf_{x_2,x_3,\dots,x_N}\left\{N\,c_N(x,x_2,\dots,x_N)-\sum_{i=2}^N\f(x_i)\right\}+N\,M_{N-1}\Big(\frac{N-1}{N}\f\Big)\;.
\ee
Here we used an additional constant in order to preserve the vanishing condition at infinity
(see the Lemma \ref{propfhat} hereafter). The next fundamental Lipschitz estimate enlights the regularization effect of the map $\f\mapsto\hat\f$.
 
\begin{prop}\label{prop:regularpot} For every $R>0$, there exists a constant $\gamma_N(R)$ such that:
\be\label{equiLip}
\{\hat{\f}\ :\ \f\in C_0,\ \f\le R\}\subset Lip_{\gamma_N(R)}(\R^d).
\ee
\end{prop}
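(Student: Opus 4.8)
The plan is to exploit the representation \eqref{def2:hatf}, which displays $\hat\f$ as an infimum over the auxiliary points $(x_2,\dots,x_N)$ of a family of functions of $x$, plus a constant independent of $x$. Splitting off from $c_N$ the part depending on the first variable, namely $c_N(x,x_2,\dots,x_N)=\sum_{i=2}^N|x-x_i|^{-1}+c_{N-1}(x_2,\dots,x_N)$, one may rewrite
\[
\hat\f(x)=N\,M_{N-1}\!\Big(\tfrac{N-1}{N}\f\Big)\;+\;\inf_{(x_2,\dots,x_N)\in(\R^d)^{N-1}}\Big\{\,N\sum_{i=2}^N\frac1{|x-x_i|}+h(x_2,\dots,x_N)\Big\},
\]
with $h(x_2,\dots,x_N):=N\,c_{N-1}(x_2,\dots,x_N)-\sum_{i=2}^N\f(x_i)$ not depending on $x$. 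Since $x\mapsto|x-x_i|^{-1}$ is smooth away from $x_i$ with gradient of modulus $|x-x_i|^{-2}$, an infimum of such functions is Lipschitz, with a controlled constant, as soon as the auxiliary points that realize the infimum up to a small error are kept at a uniform positive distance from $x$. So the whole argument reduces to such a separation estimate, which I expect to be the crux.

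To obtain it, I would first note that the infimum above is $\le 0$ for every $x$: letting $x_2,\dots,x_N\to\infty$ in pairwise distinct directions makes $c_N(x,x_2,\dots,x_N)$ vanish and, since $\f\in C_0$, makes all the $\f(x_i)$ vanish as well. Moreover $h\ge-(N-1)R$ (because $c_{N-1}\ge0$ and $\f\le R$), so the infimum is $\ge-(N-1)R$ and $\hat\f$ is finite-valued. Now fix $x$ and take a configuration $(x_2,\dots,x_N)$ that is $\eps$-optimal in the infimum, with $0<\eps\le1$; from the two bounds above,
\[
N\sum_{i=2}^N\frac1{|x-x_i|}\;\le\;\eps-h(x_2,\dots,x_N)\;\le\;1+(N-1)R,
\]
hence each term forces $|x-x_i|\ge r:=N/\bigl(1+(N-1)R\bigr)$, a bound uniform in $x$ and in $\f\in\{\f\le R\}$. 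This is precisely where the hypothesis $\f\le R$ enters.

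With the separation in hand, the Lipschitz estimate is routine. Given $x,x'$ with $|x-x'|\le r/2$, pick $(x_2,\dots,x_N)$ that is $\eps$-optimal for $\hat\f(x')$, so $|x'-x_i|\ge r$ and therefore $|x-x_i|\ge r/2$; the reverse triangle inequality together with $|a^{-1}-b^{-1}|=|a-b|/(ab)$ gives $\bigl|\,|x-x_i|^{-1}-|x'-x_i|^{-1}\,\bigr|\le 2|x-x'|/r^2$, and testing this configuration in the infimum for $\hat\f(x)$ yields $\hat\f(x)-\hat\f(x')\le \tfrac{2N(N-1)}{r^2}|x-x'|+\eps$. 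Letting $\eps\downarrow 0$ and exchanging the roles of $x$ and $x'$, one gets $|\hat\f(x)-\hat\f(x')|\le\tfrac{2N(N-1)}{r^2}|x-x'|$ whenever $|x-x'|\le r/2$; a standard chaining along the segment $[x,x']$ (subdivided into steps of length $\le r/2$) removes the restriction. This proves \eqref{equiLip} with
\[
\gamma_N(R)=\frac{2N(N-1)}{r^2}=\frac{2(N-1)\bigl(1+(N-1)R\bigr)^2}{N}.
\]

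As anticipated, the only genuinely delicate point is the uniform separation $|x-x_i|\ge r$: it is what turns the pointwise-unbounded gradients $|x-x_i|^{-2}$ into a constant independent both of the base point and of the potential, and it rests on the a priori bound $\f\le R$ together with $\f\in C_0$. Everything else --- the two forms of $\hat\f$, the ``$\le 0$'' observation, and the chaining argument --- is standard.
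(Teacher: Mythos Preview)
Your argument is correct and follows essentially the same route as the paper: both use the representation \eqref{def2:hatf}, establish the same uniform separation bound $|x-x_i|\ge N/(1+(N-1)R)$ for near-optimal configurations via the ``infimum $\le 0$'' observation, and deduce the Lipschitz estimate from the controlled gradient of $|x-x_i|^{-1}$. The only cosmetic differences are that the paper invokes the Lagrange mean value theorem (working in a ball of radius $\eta/4$) where you use the elementary identity $|a^{-1}-b^{-1}|=|a-b|/(ab)$ with radius $r/2$, yielding a slightly different explicit constant.
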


\begin{proof} Let $\f\in C_0$. 
Recalling the expression \eqref{def2:hatf} for $\hat\f$, a preliminary estimate is the following:
$$\inf_{x_2,\dots,x_N}\left\{Nc_N(x,x_2,\dots,x_N)-\sum_{i=2}^N\f(x_i)\right\}\le0,$$
which is obtained taking $x_2,\dots, x_n$ arbitrarily away from $x$ and from each others.

A more delicate estimate is this: let $x\in\R^d$ and let $\eps >0$. Then there exists $\eta= \eta(\eps)>0$ such that for all $\overline x_2,\dots,\overline x_N$ which almost realize $\hat{\f}(x)$ in the sense that
$$Nc_N(x,\overline x_2,\dots,\overline x_N)-\sum_{i=2}^N\f(\overline x_i)+NM_{N-1}\Big(\frac{N-1}{N}\f\Big)\le\hat{\f}(x)+\eps$$
and
$$Nc_N(x,\overline x_2,\dots,\overline x_N)-\sum_{i=2}^N\f(\overline x_i)\le\eps$$
it holds
$$|x-\overline x_i|\ge\eta,\qquad i=2,\dots,N.$$
Hence the $\overline x_i$'s need to be at least at distance $\eta$ from $x$ where $\eta$ does not depends on $x$. In fact for all $i\in\{2,\dots,N\}$,
$$\eps\ge Nc_N(x,\overline x_2,\dots,\overline x_N)-\sum_{i=2}^N\f(\overline x_i)\ge N \frac{1}{|x-\overline x_i |}-(N-1)R$$
so that
$$|x-\overline x_i|\ge\frac{N}{\eps+(N-1)R}.$$
In particular we may choose $\eta(\eps)=\frac{N}{\eps+(N-1)R}$ and for all $\eps\in(0,1]$ we have $\eta(\eps)\ge\frac{N}{1+(N-1)R}$.

We may now make the Lipschitz estimates for $\hat\f$. Let $x\in\R^d$, let $\eps \leq 1$ and let $y$ be such that $|y-x|\le\frac{\delta}{4}$ we choose $\overline x_i$ for $i\in \{2, \dots N\}$ which almost realize $\hat{\f}(x)$ in the sense above. We use the $\overline x_i$ in both formulas for $\hat{\f}(x)$ and $\hat{\f}(y) $ to obtain
\begin{eqnarray*}
\hat{\f}(y)-\hat{\f}(x)-\eps&\le&N c_N(y,\overline x_2,\dots,\overline x_N)-\sum_{i=2}^N\f(\overline x_i)-Nc_N(x,\overline x_2,\dots,\overline x_N)+\sum_{i=2}^N\f(\overline x_i)\\
&=&N\sum_{i=2}^N \left(\frac{1}{|y-\overline x_i|}-\frac{1}{|x-\overline x_i|}\right) \leq N \sum_{i=2}^N \frac{|x-y|}{|\xi_i-\overline x_i|^2} \\
&\leq&\frac{N(N-1)16}{9\delta^2}|x-y|\le\frac{(N-1)(1+(N-1)R)^216}{9N}|x-y|,
\end{eqnarray*}
where the inequality of the second line, holding for a suitable $\xi_i$, follows from Lagrange intermediate value theorem applied to the functions $1/|\cdot-\overline x_i|$. This is allowed since $|x-\overline x_i|>\eta$ and $|x-y|\le\eta/4$; in particular $|\xi_i-\overline x_i|\ge3\eta/4$. From the inequalities above one gets a Lipschitz estimate for $\hat{\f}$ in $B(x,\eta/4)$ with a constant independent of $x$ that we denote by $\gamma_N(R)$. Then, clearly we have a global Lipschitz estimate with the same constant.
\end{proof}

\begin{rema} The second addendum in \eqref{def2:hatf} is estimated as
$$M_{N-1}\left(\frac{N-1}{N}\f\right)=\sup_{x_2,\dots,x_N}\left\{\sum_{i=2}^N \frac{1}{N}\f(x_i)-c_{N-1}(x_2, \dots,x_N)\right\}\le\frac{N-1}{N} R,$$
which is obtained by the fact that $c_{N-1}$ is positive and $\f\le R$. All in all we have $\hat{\f}\le(N-1)R$
\end{rema}

We conclude this subsection with a crucial technical result.

\begin{lemm}\label{propfhat} 
Let $\f\in C_0$, $\hat\f$ defined by \eqref{def:hatf} and $\Delta_N(\f)$ defined in \eqref{def:DeltaN}. Then
\begin{itemize}
\item[i)]$\hat\f$ belongs to $C_0$.
\item[ii)]The function $\psi=(1-\frac1N)\f+\frac1N\hat\f$ satisfies:
\be\label{identity2}
M_N(\psi)=M_{N-1}\Big(\frac{N-1}{N}\f\Big).
\ee
\be\label{energy}
I_N(\psi)\ge I_N(\f)+(1-\|\rho\|)\Delta_N(\f).
\ee
\be\label{bounds}
\psi\ge\f-\Delta_N(\f).
\ee
\end{itemize}
\end{lemm}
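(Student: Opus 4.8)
The plan is to substitute the definition \eqref{def:hatf} of $\hat\f$ into $\psi=(1-\frac1N)\f+\frac1N\hat\f$ and to work from the resulting closed form. With $m:=M_{N-1}\big(\frac{N-1}{N}\f\big)$ (a finite constant, as $\f\in C_0$ is bounded), a one-line computation gives
\[
\psi(x)=\f(x)-[M_N\f](x)+m .
\]
The entire argument then rests on the single pointwise inequality $[M_N\f](x)\le M_N(\f)$, which is immediate from $M_N(\f)=\sup_x[M_N\f](x)$, together with the splitting $c_N(x,x_2,\dots,x_N)=\sum_{j=2}^N\frac1{|x-x_j|}+c_{N-1}(x_2,\dots,x_N)$.

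For the claim $\hat\f\in C_0$, I would first invoke Proposition \ref{prop:regularpot} (with $R$ any upper bound of $\f$) to get that $\hat\f$ is Lipschitz, hence continuous; it then only remains to show $\hat\f(x)\to0$ as $|x|\to+\infty$, and since $\hat\f=\f+N(m-[M_N\f])$ with $\f\in C_0$, this reduces to proving $[M_N\f](x)\to m$. Using the splitting of $c_N$ and discarding the nonnegative term $\sum_{j\ge2}\frac1{|x-x_j|}$ gives $[M_N\f](x)\le\frac1N\f(x)+m$ for every $x$; conversely, picking for each $\eps>0$ a near-maximizer $(x_2^*,\dots,x_N^*)\in(\R^d)^{N-1}$ of $M_{N-1}\big(\frac{N-1}{N}\f\big)$ with finite coordinates and keeping it fixed as $|x|\to+\infty$, so that $\sum_{j\ge2}\frac1{|x-x_j^*|}\to0$, forces $\liminf_{|x|\to+\infty}\big([M_N\f](x)-\frac1N\f(x)\big)\ge m-\eps$, hence $\ge m$. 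Combined with $\f(x)\to0$ this yields the limit, so $\hat\f\in C_0$, and in particular $\psi\in C_0$.

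For \eqref{identity2}, expanding $M_N(\psi)$ by means of $\psi=\f-[M_N\f]+m$ gives
\[
M_N(\psi)=m+\sup_{x\in(\R^d)^N}\Big\{\frac1N\sum_{i=1}^N\f(x_i)-c_N(x)-\frac1N\sum_{i=1}^N[M_N\f](x_i)\Big\},
\]
and I would show that the supremum equals $0$. It is $\le0$ since, by the symmetry of $c_N$, one has $\frac1N\sum_i\f(x_i)-c_N(x)\le[M_N\f](x_j)$ for every $j$, so that averaging over $j$ bounds the bracket by $0$. It is $\ge0$ since, given $\eps>0$, choosing $x_1^*$ with $[M_N\f](x_1^*)\ge M_N(\f)-\eps$ and then $x_2^*,\dots,x_N^*$ with $\frac1N\sum_i\f(x_i^*)-c_N(x^*)\ge[M_N\f](x_1^*)-\eps$, the bracket evaluated at $x^*=(x_1^*,\dots,x_N^*)$ is at least $(M_N(\f)-2\eps)-M_N(\f)=-2\eps$, using $[M_N\f]\le M_N(\f)$ once more. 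Then \eqref{energy} follows by inserting $\psi=\f-[M_N\f]+m$ and $M_N(\psi)=m$ into $I_N$ and using $\int[M_N\f]\,d\rho\le M_N(\f)\|\rho\|$, which produces exactly the term $(1-\|\rho\|)\big(M_N(\f)-m\big)=(1-\|\rho\|)\Delta_N(\f)$; and \eqref{bounds} is immediate, since $\psi-\f=m-[M_N\f]\ge m-M_N(\f)=-\Delta_N(\f)$.

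The one genuinely delicate point is the vanishing at infinity of $\hat\f$: one must check that the near-maximizer of $M_{N-1}\big(\frac{N-1}{N}\f\big)$ can be taken with \emph{finite} coordinates so that the cross terms $\sum_{j\ge2}\frac1{|x-x_j^*|}$ really do tend to $0$, and one has to appeal to Proposition \ref{prop:regularpot} for the continuity — the extra constant built into \eqref{def:hatf} is precisely what forces the limit to be $0$ rather than some other value. Everything else reduces to elementary bookkeeping around the inequality $[M_N\f]\le M_N(\f)$.
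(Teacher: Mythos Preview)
Your proof is correct and follows essentially the same route as the paper: both reduce everything to the closed form $\psi=\f-[M_N\f]+m$ (with $m=M_{N-1}(\frac{N-1}{N}\f)$), establish $\lim_{|x|\to\infty}[M_N\f](x)=m$ via the same upper/lower bounds, and derive \eqref{identity2} from the averaging inequality $\frac1N\sum_i\f(x_i)-c_N(x)\le\frac1N\sum_i[M_N\f](x_i)$. The only organizational differences are that the paper proves \eqref{bounds} first and then uses it directly for the lower bound $M_N(\psi)\ge M_N(\f)-\Delta_N(\f)=m$ (whereas you give a separate near-maximizer argument), and the paper deduces \eqref{energy} from \eqref{identity2} and \eqref{bounds} rather than from integrating $[M_N\f]\le M_N(\f)$; both variants yield the same bound. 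Your worry about ``finite coordinates'' in the near-maximizer for $M_{N-1}$ is unfounded: by definition the supremum in $M_{N-1}$ runs over $(\R^d)^{N-1}$, so any $\eps$-maximizer automatically lies in $(\R^d)^{N-1}$.
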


\begin{proof}
Let us prove {\it i)}. By Proposition \ref{prop:regularpot}, we know already that $\hat\f$ is Lipschitz continuous. Owing to \eqref{def:hatf}, we have only to show that
$$\lim_{|x|\to+\infty}[M_N\f](x)=M_{N-1}\Big(\frac{N\!-\!1}{N}\f\Big).$$
First, as $c_N(x_1,x_2,\dots,x_N)\ge c_{N-1}(x_2,\dots,x_N)$, we deduce that:
\begin{align*}
[M_N\f](x) &\le \frac{\f(x)}{N}+\sup_{ (x_2,\dots , x_N)} \left\{\frac1{N} \sum_{i=2} ^N \f(x_i)-c_{N-1}(x_2,\dots,x_N)\right\}\\
&=\frac{\f(x)}{N}+M_{N-1}\Big(\frac{N\!-\!1}{N}\f\Big)
\end{align*}
Thus, as $\f\in C_0$, we have
$$\limsup_{|x|\to \infty} [M_N\f](x)\le M_{N-1}\Big(\frac{N\!-\!1}{N}\f\Big).$$
For the converse inequality, we observe that, for every $(x_2, \dots , x_n)$, it holds
$$[M_N\f](x)\ge\frac{\f(x)}{N}-\sum_{j=2}^N \frac1{|x-x_j|} + \frac1{N} \sum_{i=2} ^N \f(x_i)-c_{N-1}(x_2,\dots,x_N).$$
hence the conclusion by sending first $|x|$ to infinity and then optimizing with respect to $x_2,\dots ,x_N$.

\med We prove now {\it ii)}. First the lower bound of $\psi$ given in \eqref{bounds} is obtained by recalling that $[M_N\f](x)\le M_N(\f)$. Then we infer that:
$$ \hat \f(x) \ge \f(x) + N \Big( M_{N-1}\Big(\frac{N\!-\!1}{N}\f\Big) - M_N(\f)\Big) = \f(x) - \Delta_N(\f).$$
In order to show \eqref{identity2}, we observe that, by the definition of function $[M_N\f]$, we have
$$\sum_{i=1}^{N} [M_N\f](x_i)\ge\sum_{i=1}^N \f(x_i)-N\,c_N(x_1,x_2,\dots,x_N).$$
By applying the definitions of $\psi$ with $\hat\f$ given by \eqref{def:hatf}, it follows that, for every $x=(x_1,x_2,\dots,x_N)\in(\R^d)^N$:
\begin{align*}
\sum_i\psi(x_i)&=\sum_i\f(x_i)+N\,M_{N-1}\Big(\frac{N-1}{N}\f\Big)-\sum_i[M_N\f](x_i)\\
&\le N\,M_{N-1}\Big(\frac{N-1}{N}\f\Big)+N\,c_N(x)
\end{align*}
Therefore
$$\frac1{N}\sum_i \psi(x_i)- c_N(x)\le M_{N-1}\Big(\frac{N\!-\!1}{N}\f\Big)$$
and the inequality $M_N(\psi)\le M_{N-1}\big(\frac{N-1}{N}\f\big)$ follows by maximizing with respect to $x$. The converse inequality holds true since, by \eqref{bounds}
$$M_N(\psi)\ge M_N(\f)-\Delta_N(\f)=M_{N-1}\Big(\frac{N-1}{N}\f\Big).$$
Eventually we infer also \eqref{energy} as a consequence of \eqref{identity2} and \eqref{bounds}.
\end{proof}

\subsection{Proof of Theorem \ref{existencedual}}

We proceed in several steps.
 
\med{\bf Step 1. }{\it Let $\delta$ as given by the assumption \eqref{finitecost}. Then there exists $R=R(\delta) >0$ such that:
$$\Cbar(\rho)=\sup\left\{I_N(\f)\ :\ \f\in C_0(\R^d,[0,R])\right\}.$$}
Indeed, by \eqref{plus}, we have $M_N(\f_+)=M_N(\f)$, thus $I_N(\f_+)\ge I_N(\f)$ for every $\f\in C_0$. Therefore the supremum of $I_N(\f)$ is unchanged if we restrict to $\f\in C_0^+$. On the other hand, for every given $\eps>0$, we may restrict the supremum to the subclass
$$\A_\eps:=\left\{\f\in C_0^+\ :\ I_N(\f)\ge\Cbar(\rho)-\eps\right\}.$$
Since
$$\Cbar((1+\delta)\rho)\ge(1+\delta)\int\f\,d\rho-M_N(\f),$$
we deduce that, for every $\f\in\A_\eps$, it holds:
$$\Cbar((1+\delta)\rho)-(1+\delta)\Cbar(\rho)\ge\delta\,M_N(\f)-\eps\ge\frac{\delta}{N}\,\sup\f-\eps.$$
Therefore $\A_\eps\subset C_0(\R^d,[0,R])$ for small $\eps$, provided
$$R>\frac{\Cbar((1+\delta)\rho)-(1+\delta)\Cbar(\rho)}{N\delta}.$$

\med{\bf Step 2. }{\it For every $\eps>0$, there exists $\psi\in C_0(\R^d,[0,NR])$ such that
\begin{equation}\label{epssolution}
I_N(\psi)\ge\Cbar(\rho)-\eps,\qquad M_N(\psi)\le R,\qquad{\rm Lip}(\psi)\le \gamma_N(NR).
\end{equation}}

The existence of $\psi$ satifying \eqref{epssolution} will be derived after designing a suitable sequence $(u_n)$ in $C_0$. We start with an element $\f_\eps\in C_0(\R^d;[0,R])$ such that $I_N(\f_\eps)>\Cbar(\rho)-\eps$ as given in Step 1. Then we define a sequence $(u_n)$ as follows:
$$u_0=\f_\eps,\qquad u_{n+1}=\frac1N\hat u_n+\frac{N-1}{N}u_n.$$ 
Applying Proposition \ref{propfhat} we get
\begin{align}
I_N(u_{n+1})&\ge I_N(u_n)+(1-\|\rho\|)\Delta_N(u_n)\label{energyn}\\
u_{n+1}&\ge u_n-\Delta_N(u_n)\label{psimonotone}\\
M_N(u_n)&\ge M_{N-1}\Big(\frac{N-1}{N}u_n\Big) = M_N(u_{n+1}).\label{MNmonotone}
\end{align}

\med From \eqref{energyn} follows that $I_N(u_n)$ is non-decreasing. Since $I_N(\f_\eps)\le I_N(u_n) \le \Cbar(\rho)$, 
its limit satisfies:
\begin{equation}\label{limiteI_N}
\Cbar(\rho) -\eps \ <\ \lim_n I_N(u_n) \le\ \Cbar(\rho) \ .
\end{equation}
Now we use the condition $\|\rho\|<1$ to infer from \eqref{energyn} that 
$$\sum_{n=1}^\infty \Delta_N(u_n)\le\frac{\eps}{1-\|\rho\|}<+\infty.$$
Let us denote by $\eps_n:=\sum_{k\ge n}\Delta_N(u_k)$ the remainder of the series; we see from \eqref{psimonotone} that $ v_n= u_n - \eps_n$ is monotone non-decreasing. Therefore $u_n$ and $v_n$ share the same point-wise limit $u(x)$ which at least is a lower semicontinuous function. Next we can derive in a straightforward way a uniform upper bound for the $u_n$ by applying the monotonicity property \eqref{MNmonotone}. Indeed, according to the choice $u_0=\f_\eps$ for the initial term which satisfies $\sup u_0\le R$, we have
\begin{equation}\label{uniformbound}
\frac1{N}\sup u_n\le M_N( u_n)\le M_N(u_0)\le R.
\end{equation}
Then we may apply to $(v_n)$ the continuity property on monotone sequences given in Lemma \ref{familyMN} for $k=N-1$ and $k=N$, noticing that $M_k(u_n) = M_k(v_n) +\eps_n$:
$$M_N(u_n)\to M_N(u),\qquad M_{N-1}(\frac{N-1}{N}u_n)\to M_{N-1}(\frac{N-1}{N}u)\;.$$
It follows that
$$\Delta_N(u)=\lim_n \Delta_N(u_n)=0.$$
As a consequence of \eqref{finfty=0}, we deduce that
$$u^\infty=\limsup_{|x|\to\infty}u(x)\le0\, .$$

Next, in order to gain the Lipschitz regularity of $u$, we are going to apply Proposition \ref{prop:regularpot} to the sequence $(u_n)$. By construction, we have $\hat u_n - u_n= N(u_{n+1} - u_n)$. Therefore $\hat u_n- u_n \to 0$ and $\hat u_n\to u$ poitwise on $\R^d$. As a consequence of \eqref{equiLip} $(\hat u_n)$ is equi-Lipschitz with constant $\gamma_N(NR)$. By Arzel\'a-Ascoli's Theorem, it converges to $u$ uniformly on compact subsets of $\R^d$. Its limit $u$ satisfies as well $\sup u\le NR$ and it is Lipschitz continuous with the constant $\gamma_N(NR)$.

\med
Eventually we claim that the function $\psi= u_+$ satisfies the three requirements in \eqref{epssolution}. Indeed, $u^+(\omega)\le0$ implies that $\psi$ is an element of $C_0(\R^d;[0,NR]$. It has the same Lipschitz constant $\gamma_N(NR)$. In addition, by Lemma \ref{plus} and \eqref{uniformbound}, we have $M_N(\Psi)=M_N(u)\le R$. Eventually, by monotone convergence,
we have:
$$\lim_n I_N(u_n)=\lim_n\int u_n\,d\rho-\lim_n M_N(u_n)=\int u\,d\rho-M_N(u)=I_N(u),$$
and the first condition in Claim \eqref{epssolution} follows from \eqref{limiteI_N}. 

\med{\bf Step 3. }{\it There exists a sequence $(\f_n)\in C_0(\R^d,[0,R])$ and a function $\f \in C_0(\R^d,[0,NR])$ with ${\rm Lip}(\f)\le \gamma_N(NR)$ such that
\begin{equation}\label{final}
\f_{n+1} \ge \f_n,\qquad I_N(\f_n)\to\Cbar(\rho),\qquad\sup_n\f_n=\f.
\end{equation}}
By applying step 2 for $\eps=\frac1{n}$, we obtain a sequence $(\psi_n)$ in $C_0(\R^d;[0,NR])$ with a uniform Lipschitz constant $\gamma_N(NR)$ and such that $I_N(\psi_n)\to\Cbar(\rho)$. By Ascoli-Arzela's Theorem and possibly after passing to a suitable subsequence, we may assume that $\psi_n$ converges uniformly on compact subsets to a Lipschitz continuous $\f\in C(\R^d;[0,NR])$. 
At this point, we would need also a uniform convergence on the whole $\R^d$ in order to conclude that $\f$ vanishes at infinity. To avoid this difficulty, we turn to another sequence in $C_0(\R^d,[0,NR])$, namely $(\f_n)$ defined by
$$\f_n:=\inf\left\{\psi_m\ :\ m\ge n\right\}.$$
Clearly the pointwise convergence $\psi_n\to\f$ implies that $\f_n$ converges increasingly to $\f$. As $M_N(\f_n) \le M_N(\psi_n)$, we have
$$I_N(\f_n)\ge I_N(\psi_n)-r_n,\qquad r_n=\int(\psi_n-\f_n)\,d\rho,$$
where $r_n\to0$ by dominated convergence. Therefore $(\f_n)$ is a maximizing sequence for \eqref{compCbar} and by applying the assertion iii) of Theorem \ref{prop:cns} for $k=N-1$ (see Remark \ref{rhoknonnul}), we deduce that
$$M_N(\f_n)-M_{N-1} \left(\frac{N-1}{N} \f_n\right)\to0.$$
Thus, again by the monotonicity property of Lemma \ref{familyMN}, we are led to the equality 
$$M_N(\f)-M_{N-1}\left(\frac{N-1}{N}\f\right)=0$$
from which follows that $\f^\infty\le 0$ (see Lemma \ref{DeltaN}). As $\f$ is continuous nonnegative, we conclude that $\f\in C_0$ thus \eqref{final}.

\med
\begin{proof}[Concluding the proof.]
The $\f$ constructed in Step 3 obviously satisfies \eqref{achievement}. Indeed the convergence $\f_n\to\f$ is strong in $C_0$ (as a consequence of Dini's Theorem on the compact set $\R^d\cup\{\omega\}$) and therefore, recalling that the map $M_N:C_0\to\R$ is Lipschitz continuous (see Lemma \ref{prop:Mk}), it holds $M_N(\f_n)\to M_N(\f)$. Thus $u=\f-M_N(\f)$ is an optimal potential for the dual problem \eqref{cdual}. Its Lipschitz constant is not larger than $\gamma_N(NR)$ given by Proposition \ref{prop:regularpot}, being $R=R(\delta)$ given in Step 1.

Eventually let be $v$ be another solution to \eqref{cdual}. Then $v=\tilde{\f}-M_N(\f)$ for an element $\tilde{\f}\in C_0$ solving \eqref{achievement}. Thanks to \eqref{energy}, the function 
$\psi=(1-\frac1N)\tilde{\f}+\frac1N \widehat{\tilde{\f}}$ introduced in Lemma \ref{propfhat} satisfies:
$$I_N(\psi)\ge I_N(\tilde{\f})+(1-\|\rho\|)\,\Delta_N(\tilde{\f}).$$
The optimality of $\tilde{\f}$ implies that $I_N(\psi)=\ I_N(\tilde{\f})$ and that $\Delta_N(\tilde{\f})=0$, thus $\psi\ge \tilde{\f}$ thanks to \eqref{bounds}. It follows that $\psi=\tilde{\f}= \hat \psi$ holds $\rho$ a.e., hence on $\spt(\rho)$ by continuity. As $\sup \tilde{\f} \le R=R(\delta)$ by step 1, then $M_N(\psi)\le M_N(\tilde{\f})$ implies that $\sup\psi\le NR$ and, by applying\eqref{equiLip}, $\psi$ is Lipschitz with constant $\gamma_N(NR)$ while it coincides with $\tilde{\f}$ on $\spt(\rho)$. 
\end{proof}

\section{Quantization of relaxed minimizers}\label{relaxedmass}

In this Section we focus on the relaxed problem mentioned in the introduction namely
\be\label{minrelax}
\min\left\{\Cbar (\rho)-\int V\,d\rho\ :\ \rho\in\PP^-\right\}\ ,
\ee
where $V$ is a given potential in $C_0$. Note that the infimum above would blow-up to $-\infty$ if $V$  
is not upper bounded, as for instance in the case of Coulomb potential. 
The existence of solutions to \eqref{minrelax} in $\PP^-$ is straightforward as we minimize a convex lower semicontinuous functional on the weakly* compact set $\PP^-$. 
On the other hand, as the minimum in \eqref{minrelax} agrees with that of
\be \label{nonrelaxed} \inf\left\{C (\rho)-\int V\,d\rho\ :\ \rho\in\PP\right\}, \ee
any solution $\rho\in\PP$ to \eqref{minrelax} is also a solution to \eqref{nonrelaxed} and vice-versa.

\medskip
We pay now attention to the set of minimizers 
$$\mathcal{S}_V=\left\{\rho\in\PP^-\ :\ \rho \ \text{solves \eqref{minrelax}} \right\}.$$
As by \eqref{compCbar} $ \Cbar(\rho)$ agrees with the Fenchel conjugate of $M_N$, we may interpret  $\mathcal{S}_V$ in the language of convex analysis as
the sub-differential of $M_N$ at $V$, i.e.
$$\mathcal{S}_V=\left\{\rho\in\PP^-\ :\ \Cbar(\rho)-\int V\,d\rho+M_N(V)\le 0 \right\}.$$
In particular  $\mathcal{S}_V$ is a convex weakly* compact subset of $\PP^-$. Note that in general $\mathcal{S}_V$
is not a singleton as the functional $\Cbar$ is not stricly convex. 
Besides we observe that the minimum value of \eqref{minrelax}  is strictly negative unless
 the positive part of $V$ vanishes. Indeed , by considering competitors $\rho$ such that $\|\rho\|\le \frac1{N}$ (thus $\Cbar(\rho)=0$ by Proposition \ref{cbzero}), we have the following estimate
\be\label{miniesti}
\inf\big\{\eqref{minrelax}\big\}\ \le\ -\frac1N\,\sup V^+.
\ee


One of the major questions in the ionization problem, as developed for instance in \cite{fnvdb18,fnvdb18b,solo03,solo91} in a much more complex case,
is to determine conditions on the potential $V$ under which \eqref{minrelax} admits solutions in $\PP$. We give here a sufficient condition.
\begin{theo}\label{optimalproba}
Assume that the potential $V$ satisfies the condition
\be \label{strictMN}
M_N(V)>M_{N-1}\Big(\frac{N-1}{N} V\Big).
\ee 
Then all solutions $\rho$ to \eqref{minrelax} satisfy $\|\rho\|=1$, that is $\mathcal{S}_V \subset\PP$.
Moreover the supremum defining $M_N(V)$ in \eqref{def:Mk} is a maximum.
\end{theo}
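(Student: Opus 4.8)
The plan is to combine the optimality conditions from Theorem \ref{prop:cns} with the existence of an optimal (indeed Lipschitz) dual potential, and to argue by contradiction. Suppose $\rho\in\mathcal{S}_V$ with $\|\rho\|<1$. Since $V\in C_0$, the minimum value of \eqref{minrelax} is strictly negative by \eqref{miniesti} (the hypothesis \eqref{strictMN} forces $\sup V^+>0$, since otherwise $V\le 0$ and both sides of \eqref{strictMN} would be $\le 0$ with $M_N(V)=M_{N-1}(\tfrac{N-1}{N}V)=0$ by Lemma \ref{prop:Mk}~iii) applied with $\psi_+=0$); hence $\Cbar(\rho)-\int V\,d\rho<0$, which with Proposition \ref{cbzero} yields $\|\rho\|>1/N$ and $\Cbar(\rho)>0$. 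Thus $\rho$ falls in the regime covered by Theorem \ref{prop:cns}.

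The key observation is that $V$ itself (after the change of variables $\f=\psi-\psi_\infty$, or directly since $V\in C_0$) is optimal in the dual problem \eqref{compCbar}: indeed the subdifferential characterization $\mathcal{S}_V=\partial M_N(V)$ given just before the theorem statement says exactly that $\rho\in\mathcal{S}_V$ iff $\Cbar(\rho)=\int V\,d\rho-M_N(V)$, i.e. $\f=V$ realizes the supremum in \eqref{compCbar}. Now take an optimal decomposition $\{\rho_k\}$ for $\rho$ in \eqref{rhok}, which exists since $\Cbar(\rho)<+\infty$. Applying Theorem \ref{prop:cns} with the optimal potential $\f=V$: condition i) gives $\sum_k\|\rho_k\|=1$, and by Remark \ref{rhoknonnul} there exist $l_-\le \overline k\le l_+$ with $\|\rho_{l_\pm}\|>0$, where $\overline k$ is the integer part of $N\|\rho\|$. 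Since $\|\rho\|<1$ we have $\overline k\le N-1$, so in particular $\|\rho_{l}\|>0$ for some $l\le N-1$. Then condition iii) of Theorem \ref{prop:cns} applied with $k=N-1$ (which is $\ge l$, using the monotonicity \eqref{ineqMk} to pass from $l$ to $N-1$) yields
\[
M_{N-1}\Big(\frac{N-1}{N}V\Big)=M_N(V),
\]
directly contradicting the hypothesis \eqref{strictMN}. Therefore $\|\rho\|=1$, i.e. $\mathcal{S}_V\subset\PP$.

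It remains to show the supremum defining $M_N(V)$ in \eqref{def:Mk} is attained. Here I would invoke Theorem \ref{existencedual}: since $\mathcal{S}_V\subset\PP$ and $V\in C_0$, any solution $\rho$ is a probability with finite cost. One would like to apply the existence of a Lipschitz dual potential, but \eqref{finitecost} requires $\|\rho\|<1$ with a little room to spare; instead, since $\|\rho\|=1$, I would argue via the regularization map $\f\mapsto\hat\f$ from \eqref{def:hatf} directly. Starting from the optimal $\f=V$, Lemma \ref{propfhat} gives $\psi=(1-\tfrac1N)V+\tfrac1N\hat V$ with $I_N(\psi)\ge I_N(V)+(1-\|\rho\|)\Delta_N(V)=I_N(V)$ (using $\|\rho\|=1$), so $\psi$ is also optimal, $\psi\in C_0$, $\psi$ is Lipschitz by Proposition \ref{prop:regularpot}, and $M_N(\psi)=M_{N-1}(\tfrac{N-1}{N}V)<M_N(V)$ by \eqref{identity2} and \eqref{strictMN} — but wait, this would contradict optimality of $\psi$ unless I compare energies carefully. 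The cleaner route: having a Lipschitz optimal potential $\f$ for $M_N$, the supremum in $M_N(\f)=\sup\{\frac1N\sum\f(x_i)-c_N(x)\}$ is over a coercive functional (the $\f(x_i)$ are bounded, $c_N\ge 0$ and $c_N\to+\infty$ only helps, while maximizing sequences cannot escape to infinity because $\f^\infty\le 0<$ the value $M_N(\f)=\Cbar(\rho)>0$, so along a maximizing sequence the points stay in a fixed compact set by the same argument as in Lemma \ref{DeltaN} combined with \eqref{strictMN} ruling out mass escaping); extracting a convergent subsequence and using continuity of $\f$ and lower semicontinuity of $c_N$ produces a maximizer. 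The main obstacle I anticipate is precisely this last compactness argument: one must rule out that an optimizing configuration for $M_N(V)$ sends some points to infinity, and the key leverage is that doing so would produce a configuration witnessing $M_{N-1}(\tfrac{N-1}{N}V)\ge M_N(V)$, contradicting \eqref{strictMN} — so the hypothesis \eqref{strictMN} is used twice, once for $\|\rho\|=1$ and once for attainment.
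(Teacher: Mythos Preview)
Your argument for $\mathcal{S}_V\subset\PP$ is essentially the paper's: assume $\|\rho\|<1$, note that $V$ itself is an optimal dual potential (since $\rho\in\mathcal{S}_V=\partial M_N(V)$ means exactly $\Cbar(\rho)=\int V\,d\rho-M_N(V)$), take an optimal stratification $\{\rho_k\}$, and use condition~iii) of Theorem~\ref{prop:cns} at $k=N-1$ to get $M_{N-1}\big(\tfrac{N-1}{N}V\big)=M_N(V)$, contradicting \eqref{strictMN}. One small gap: your deduction ``$\Cbar(\rho)-\int V\,d\rho<0$, which with Proposition~\ref{cbzero} yields $\|\rho\|>1/N$'' is a non sequitur as written (one can perfectly well have $\Cbar(\rho)=0$ and $\int V\,d\rho>0$). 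The clean fix is: if $\|\rho\|\le 1/N$ then $\Cbar(\rho)=0$, hence $\int V\,d\rho=M_N(V)$; but $\int V\,d\rho\le \|\rho\|\sup V\le \tfrac1N\sup V=M_1(\tfrac{V}{N})\le M_{N-1}(\tfrac{N-1}{N}V)<M_N(V)$, a contradiction. (Alternatively, note that the hypothesis $\|\rho\|>1/N$ in Theorem~\ref{prop:cns} is only used, via Remark~\ref{nonnul}, to ensure $M_N(\f)>0$; here $M_N(V)>M_{N-1}(\tfrac{N-1}{N}V)\ge M_1(\tfrac{V}{N})=\tfrac1N\sup V>0$ directly, so the optimality conditions apply regardless.)

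For the attainment of the supremum in $M_N(V)$, your route through Theorem~\ref{existencedual} and the regularization $\hat\f$ is a detour that, as you yourself notice, runs into trouble (the theorem requires $\|\rho\|<1$, and the regularization argument with $\|\rho\|=1$ gives no gain). Your final paragraph does land on the right mechanism, but the paper executes it much more directly: extend $V$ to $X=\R^d\cup\{\omega\}$ by $V(\omega)=0$ and $c_N$ to $X^N$ by $\tilde c_N$ as in \eqref{ctil}; then $\tfrac1N\sum V(x_i)-\tilde c_N(x)$ is upper semicontinuous on the compact $X^N$, so it attains its maximum at some $\bar x\in X^N$, and this maximum equals $M_N(V)$. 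If some $\bar x_i=\omega$, say $\bar x_N=\omega$, then
\[
M_N(V)=\frac1N\sum_{i=1}^{N-1}V(\bar x_i)-\tilde c_{N-1}(\bar x_1,\dots,\bar x_{N-1})\le M_{N-1}\Big(\frac{N-1}{N}V\Big),
\]
contradicting \eqref{strictMN}. Hence $\bar x\in(\R^d)^N$ and the supremum is a maximum. No dual-existence machinery or Lipschitz regularity is needed here; the compactification from Section~\ref{srelax} does all the work.
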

\begin{proof}
Assume that there exists a solution $\rho$ such that $\|\rho\|<1$. Then, as $V$ is a solution to the dual problem associated with $\rho$, we may apply the optimality conditions derived in Theorem \ref{prop:cns}. Let $\{\rho_l\}$ be optimal in \eqref{rhok}. Then by Remark \ref{rhoknonnul}, we know that it exists at least one index $l\le N-1$ such that $\|\rho_l\|>0$ and therefore, condition iii) is satisfied for $k=N-1$. Thus $M_N(V)=M_{N-1}(\frac{N-1}{N} V)$, in contradiction with our assumption.
For the last statement, we consider a point $\bar x\in X^N$ (recall that $X=\R^d\cup \{\omega\}$) such that:
$$ M_N(V)\  =\sup_{x\in (\R^d)^N} \frac1{N} \sum_{i=1}^N V(x_i) - c_N(x)\ =\ \frac1{N} \sum_{i=1}^N V(\overline{x_i}) - {\widetilde c_N}(\bar x)\ ,$$
being ${\widetilde c_N}$ the natural upper semicontinuous extension of $c_N$ to $X^N$ (see \eqref{ctil}). 
Such an optimal $\bar x$ exists since we maximize an u.s.c. function on a compact set.
If the infimum is not reached in  $(\R^d)^N$, that means that $\overline{x_i}=\omega$ for at least one index $i$
for instance $i=N$ and we are led to
$$  M_N(V) = \sum_{i=1}^{N-1} V(\overline{x}_i) - {\widetilde c_{N-1}}(\overline{x}_1,\overline{x}_2,\dots, \overline{x}_{N-1} ) \le M_{N-1}\Big(\frac{N-1}{N} V\Big) \ ,$$
in contradiction with \eqref{strictMN}.  
\end{proof}

In view of Theorem \ref{optimalproba}, a meaningful issue is now to understand what happens when equality $M_N(V)= M_{N-1}(\frac{N-1}{N} V)$ holds. To that aim it is useful to introduce:
\begin{equation}\label{def:kbar}
k_N(V):=\max\left\{k\in\{1,2,\dots, N\}\ :\ M_k\Big(\frac{k}{N}\,V\Big)>M_{k-1}\Big(\frac{k-1}{N}\,V\Big)\right\}.
\end{equation}
Here we set by convention $M_0(0)=0$ so that $k_N(V)$ is well defined if $V^+$ does not vanish. Otherwise we set $k_N(V)=0$.

\begin{coro}\label{slowexists}
Let $V$ be a potential $V\in C_0^+$ such that:
$$\beta:=\limsup_{|x|\to+\infty}|x|V(x)>0.$$
Then the condition \eqref{strictMN} is satisfied whenever 
\be\label{fast}
\beta>N(N-1).
\ee
In particular the conclusions of Theorem \ref{optimalproba} apply in this case. 
\end{coro}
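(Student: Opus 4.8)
The goal is to verify the strict inequality \eqref{strictMN}; once this is known, Theorem \ref{optimalproba} supplies the rest. The plan is to bound $M_N(V)$ from below by testing the definition \eqref{def:Mk} against a carefully chosen $N$-point configuration: namely an \emph{optimal} configuration for the $(N-1)$-particle problem $M_{N-1}\big(\frac{N-1}{N}V\big)$, kept in a fixed bounded region, together with one extra particle pushed far out in a direction along which $V(x)$ decays no faster than $\beta/|x|$.

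First I would fix a good reference configuration for the $(N-1)$-problem. Extending $V$ continuously to $X=\R^d\cup\{\omega\}$ by $V(\omega)=0$, the map $y\mapsto\frac1N\sum_{i=1}^{N-1}V(y_i)-\ctild(y_1,\dots,y_{N-1},\omega)$ is upper semicontinuous on the compact space $X^{N-1}$ (a continuous part plus $-\ctild$, which is u.s.c.), hence attains its maximum; moreover this maximum equals $M_{N-1}\big(\frac{N-1}{N}V\big)$, since $(\R^d)^{N-1}$ is dense in $X^{N-1}$ and replacing any $\omega$-component by a point sent to infinity does not change the value in the limit. After relabelling, an optimal point has $p\le N-1$ components $y_1,\dots,y_p\in\R^d$ and the others equal to $\omega$; the optimal value being finite — and in fact positive, by \eqref{ineqMk} and \eqref{recession}, since $V^+\not\equiv 0$ — the $y_j$ are distinct and contained in a fixed ball $B(0,\rho_0)$, and
$$M_{N-1}\Big(\frac{N-1}{N}V\Big)=\frac1N\sum_{j=1}^{p}V(y_j)-c_p(y_1,\dots,y_p).$$

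Then, given $\eta>0$, the definition of $\beta$ provides points $x_1\in\R^d$ with $R:=|x_1|$ arbitrarily large and $V(x_1)\ge(\beta-\eta)/R$. Testing \eqref{def:Mk} against $(x_1,y_1,\dots,y_p,w_1,\dots,w_{N-1-p})$ and letting the auxiliary points $w_k$ run off to infinity in pairwise distinct generic directions (so that all the Coulomb terms involving them, and $V(w_k)$, vanish in the limit) yields
\begin{align*}
M_N(V)&\ \ge\ \frac{V(x_1)}{N}-\sum_{j=1}^{p}\frac1{|x_1-y_j|}+M_{N-1}\Big(\frac{N-1}{N}V\Big)\\
&\ \ge\ M_{N-1}\Big(\frac{N-1}{N}V\Big)+\frac{\beta-\eta}{NR}-\frac{N-1}{R-\rho_0},
\end{align*}
where we used $|x_1-y_j|\ge R-\rho_0$ and $p\le N-1$. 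Multiplying the last two terms by $R$ gives $\frac{\beta-\eta}{N}-\frac{(N-1)R}{R-\rho_0}$, which tends to $\frac{\beta-\eta}{N}-(N-1)$ as $R\to\infty$; by \eqref{fast} one may fix $\eta$ with $\beta-\eta>N(N-1)$, making this limit strictly positive. Hence for $R$ large enough the correction term is positive and \eqref{strictMN} holds.

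The one genuinely delicate point is the first step: we need a bona fide maximiser for $M_{N-1}\big(\frac{N-1}{N}V\big)$ supported in a ball whose radius $\rho_0$ is independent of any small parameter, so that the unavoidable Coulomb cost $\le (N-1)/(R-\rho_0)$ of the far particle against the reference cluster is of the same order $1/R$ as the gain $V(x_1)/N\ge(\beta-\eta)/(NR)$ — and is overcome by it exactly when $\beta>N(N-1)$. An $\eps$-optimal configuration would not do, as its support could escape to infinity as $\eps\to0$; this is precisely what the compactification $X$ is used for. Everything else is routine manipulation of the Coulomb cost.
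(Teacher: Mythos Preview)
Your proof is correct and follows the same strategy as the paper: locate a genuine maximiser for $M_{N-1}\big(\frac{N-1}{N}V\big)$ supported on finitely many points in a fixed ball, then test $M_N(V)$ against this configuration augmented by one far particle along which $|x|V(x)$ is close to $\beta$. The only difference is organisational: the paper first treats the case where the $(N-1)$-maximiser lies entirely in $(\R^d)^{N-1}$ and then handles the general case via the index $\bar k=k_{N-1}(V)$ together with the last statement of Theorem~\ref{optimalproba}, whereas you obtain the $p$-point maximiser ($p\le N-1$) in one step by compactifying to $X^{N-1}$ and using upper semicontinuity --- a slightly more direct route to the same conclusion.
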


\begin{proof} 
To help the reader, we begin with a first step assuming that the supremum in the definition of $M_{N-1}\Big(\frac{N-1}{N} V\Big)$ is reached by a system of $N-1$ points 
 $x_1,x_2,\dots x_{N-1}$ in $\R^d$ that is
$$M_{N-1}\Big(\frac{N-1}{N} V\Big)=\frac1{N}\sum_{i=1}^{N-1} V(x_i)-c_{N-1}(x_1,\dots,x_{N-1}).$$
Then for every $x_N$, we have:
\be \label{ineqN} M_N(V)\ge M_{N-1}\Big(\frac{N-1}{N} V\Big)+\frac{V(x_N)}{N}-\sum_1^{N-1}\frac1{|x_N-x_i|}.\ee
Now as \eqref{fast} holds, we can choose $|x_N|$ so large to have 
\be \label{conditionN}\frac{V(x_N)}{N}-\sum_{i=1}^{N-1}\frac1{|x_N-x_i|}>0\ee
and \eqref{strictMN} follows.
This proof can be extended to the case where the $N-1$ points infimum related to $M_{N-1}\Big(\frac{N-1}{N} V\Big)$   is not attained in $(\R^d)^{N-1}$, by considering instead of $N-1$ the index $\bar k:=k_{N-1}(V)$ defined by \eqref{def:kbar}. It satisfies:
$$ M_{N-1}\Big(\frac{N-1}{N} V\Big)= M_{N-2}\Big(\frac{N-2}{N} V\Big)=\dots = M_{\bar k}\Big(\frac{\bar k}{N} V\Big)
> M_{\bar k-1}\Big(\frac{\bar k-1}{N} V\Big)\ $$
(notice that $\bar k \ge 1$ since $\beta>0$ implies that $V\not= 0$).
By applying the last statement of Theorem \ref{optimalproba} with $N=\bar k$, we deduce the existence of 
a system of $\bar k$ points  $x_1,x_2,\dots x_{\bar k}$ in $\R^d$ where $\bar k\le N-1$ such that
$$M_{\bar k}\Big(\frac{\bar k}{N} V\Big)=\frac1{\bar k}\sum_{i=1}^{\bar k} V(x_i)-c_{\bar k}(x_1,\dots,x_{\bar k}).$$
Accordingly the counterpart of the inequality \eqref{ineqN} is the following
\begin{align} M_N(V)\ \ge\ & M_{\bar k}\Big(\frac{\bar k}{N} V\Big)+
\sum_{\bar k<j\le N}\frac{V(x_j)}{N} \nonumber\\
&-\sum_{1\le i\le {\bar k}<j\le N}\frac1{|x_i-x_j|}
-\sum_{\bar k<j<l\le N} \frac1{|x_j-x_l|}. \label{ineqbark}
\end{align}  %
As \eqref{ineqbark} holds for all $x_j$ with $ j>\bar k$ and for all  $x_l$ with $ l>j$, by sending $|x_l|\to \infty$ and then
$|x_j| \to \infty$ for $\bar k <j \le N-1$, we are led to
$$ M_N(V)- M_{N-1}\Big(\frac{N-1}{N} V\Big)= M_N(V)\  - M_{\bar k}\Big(\frac{\bar k}{N} V\Big) \ge \frac{V(x_N)}{N} 
- \sum_{1\le i\le {\bar k}} \frac1{|x_i-x_N|}\ , $$
which holds for every $x_N\in \R^d$. The conclusion follows by choosing $|x_N|$ so large to have:
$$\frac{V(x_N)}{N} - \sum_{1\le i\le \bar k} \frac1{|x_i-x_N|} >0\ $$
(note that this  condition is weaker than \eqref{conditionN} if $\bar k <N-1$).
\end{proof}

\begin{rema}\label{t*}
As a consequence of Theorem \ref{optimalproba}, we obtain that optimal solutions belong to $\PP$
as far as $V$ is ``large'' enough. More precisely, if the positive part of $V$
does not vanish, then there exists a constant $t^*\ge0$ such that for $Z>t^*$, the potential $Z V$ satisfies \eqref{strictMN}.  Indeed, by applying \eqref{recession}, we derive that
$$\lim_{Z\to+\infty}\frac{M_k\Big(\frac{k}{N} ZV\Big)-M_{k-1}\Big(\frac{ k-1}{N} ZV\Big)}{Z}=\frac1{N}\,\sup V>0,$$

Moreover, if the potential $V$ is strong enough at infinity, we may even have $t^*=0$. Indeed 
by applying  Corollary \ref{slowexists} to a potential $V\in C_0^+$ satisfying $\beta=+\infty$ ({\em confining potential}), we obtain that $\mathcal{S}_{tV}\subset\PP$ hold for all $t>0$.
 Unfortunately we do not know in general if the opposite condition $t<t^*$ implies that $\mathcal{S}_{tV}\cap\PP$ is empty. In Example \ref{nonexistence}, we merely show that the latter set is empty if $V$ has compact support and $t<t_*$ for a suitable $t_*\le t^*$
\end{rema}
\begin{rema}
 The minimization problem \eqref{minrelax} can be also studied for potentials $V$ possibly unbounded. In fact an important case, which is beyond the scope of this paper, occurs when $\lim_{|x|\to \infty}V=-\infty$. In this case the minimum is reached on probability measures and we observe that, extending the definition of $M_N$ to such potentials, we have a relation with the so called systems of points interactions theory confined by an external potential (see \cite{SL,Ssurvey,SP}), since
$$ -M_N(  -N^2 V) = \inf \left\{ \mathcal{H}_N(x_1,x_2,\dots, x_N) : x_i\in \R^d\right\}$$ 
where $\mathcal{H}_N$ is of the form
$$ \mathcal{H}_N(x_1,x_2,\dots, x_N) = \sum_{1\le<i<j} \ell(|x_i-x_j|) + N \sum_{i=1}^N V(x_i).$$
In such a setting, the asymptotic limit as $N\to\infty$ is one of the main point of interest of the mathematical physics community.
\end{rema}
 

 
\begin{exam}\label{nonexistence}({\em Non existence of an optimal probability})\ 
Let $V\in C_0^+$ with compact support and let $R>0$ such that $\spt V\subset\overline{B_R}$. Then it easy to check that any solution $\rho$ to \eqref{mineps} such that $\|\rho\|=1$ must satisfy as well $\spt\rho\subset\overline{B_R}$. Indeed, otherwise we can move away the part of such a $\rho$ which lies outside $\overline{B_R}$ letting $\int V\,d\rho$ invariant and making $C(\rho)$ decrease. For instance we may consider $\rho'=T^\#(\rho)$ where 
$$T(x)=k_R(|x|)\,x \quad \text{with}\quad k_R(x):=\max\Big\{1,\frac{|x|}{R}\Big\}.$$
For such a map we have $|Tx-Ty|\ge|x-y|$ with strict inequality whenever $(x,y)\notin B_R^2$, so that $C(\rho')<C(\rho)$ unless $\spt\rho\subset\overline{B_R}$.

Next by applying a lower bound estimate for $C(\rho)$ in term of the variance of $\rho$ (see Prop 2.2 in \cite{bbcd18}), we infer that:
$$C(\rho)\ge\frac{N(N-1)}{4\sqrt{{\rm Var}(\rho)}}\ge\frac{N(N-1)}{4R}.$$
On the other hand, by \eqref{miniesti} and as $\rho $ is optimal, it holds $C(\rho) \le (1-\frac1{N}) \sup V$.
 Therefore a solution in $\PP$ cannot exist unless the following lowerbound holds for $\sup V$:
 $$\frac{N^2}{4R} \ \le \ \sup V. $$
This necessary condition applies of course if we substitute potential $V$ with $tV$ and we deduce that $\mathcal{S}_{tV} \cap \PP $ is empty whenever $t<t_*$ where
$$t_*:=\frac{N^2}{4R\,\sup V}.$$
\end{exam}

\bigskip
We are now in a position to state the quantization phenomenon we have announced in the Introduction.
 For every non-vanishing $V\in C_0$, we define:
\begin{equation}\label{INV}
\mathcal {I}_N(V):=\min\left\{\|\rho\|\ :\ \rho\in\mathcal{S}_V\right\}.
\end{equation}
The minimum in \eqref{INV} is achieved by the weak* lower semicontinuity of the map $\rho\mapsto\|\rho\|$ and
optimal $\rho$ represent elements with minimal norm in $\mathcal{S}_V$
On the other hand, $\mathcal{I}_N(V)$ depends only on the positive part of $V$ i.e. $\mathcal{I}_N(V)=\mathcal{I}_N(V^+).$

\begin{theo}\label{fractionalmass}
Let $V\in C_0$, $N\in \N^*$ and $k_N(V)$ given by \eqref{def:kbar}. Then
$$\mathcal{I}_N(V)=\frac{k_N(V)}{N}\;.$$
As a consequence, the map $V\in C_0\mapsto \mathcal{I}_N(V)$ ranges into the finite set
$$\left\{\frac {k}{N}\ :\ 0\le k\le N\right\}.$$
\end{theo}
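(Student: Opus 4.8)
The plan is to establish the two inequalities $\mathcal{I}_N(V)\ge k_N(V)/N$ and $\mathcal{I}_N(V)\le k_N(V)/N$, using the optimality conditions of Theorem \ref{prop:cns} together with the monotonicity chain \eqref{ineqMk} and the characterization \eqref{def:kbar} of $k_N(V)$. Write $\bar k := k_N(V)$. The degenerate case $V^+\equiv 0$ is immediate: then $M_N(V)=0$ by \eqref{plus}, the minimum in \eqref{minrelax} is $0$ and is attained (for instance) by $\rho=0$, so $\mathcal{I}_N(V)=0=\bar k/N$; hence assume $V^+\not\equiv0$, so $1\le\bar k\le N$.

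For the lower bound $\mathcal{I}_N(V)\ge\bar k/N$, I would argue by contradiction. Suppose $\rho\in\mathcal{S}_V$ with $\|\rho\|<\bar k/N$. Since $\|\rho\|<1$, Theorem \ref{prop:cns} applies: $V$ is optimal in \eqref{compCbar} for $\rho$, so taking an optimal decomposition $\{\rho_l\}$ of $\rho$ in \eqref{rhok}, condition iii) forces $M_k(\tfrac{k}{N}V)=M_N(V)$ for every $k$ such that there exists $l\le k$ with $\|\rho_l\|>0$. Now $\sum_l \tfrac{l}{N}\|\rho_l\|=\|\rho\|<\tfrac{\bar k}{N}$ and $\sum_l\|\rho_l\|=1$ (condition i)), so necessarily $\|\rho_l\|>0$ for some $l\le \bar k-1$ — indeed if all mass sat in indices $l\ge\bar k$ we would get $\|\rho\|=\sum_l\tfrac{l}{N}\|\rho_l\|\ge\tfrac{\bar k}{N}$, a contradiction. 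Hence condition iii) yields $M_{\bar k-1}(\tfrac{\bar k-1}{N}V)=M_N(V)$. On the other hand, by the definition \eqref{def:kbar} of $\bar k$ together with the monotonicity \eqref{ineqMk}, we have $M_{\bar k-1}(\tfrac{\bar k-1}{N}V)<M_{\bar k}(\tfrac{\bar k}{N}V)\le M_N(V)$, a contradiction. This proves $\mathcal{I}_N(V)\ge\bar k/N$.

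For the upper bound $\mathcal{I}_N(V)\le\bar k/N$, I would produce an explicit minimizer of mass exactly $\bar k/N$. Using the last statement of Theorem \ref{optimalproba} applied with $N$ replaced by $\bar k$ — legitimate because \eqref{def:kbar} gives $M_{\bar k}(\tfrac{\bar k}{N}V)>M_{\bar k-1}(\tfrac{\bar k-1}{N}V)$, i.e.\ the analogue of \eqref{strictMN} holds for the potential $\tfrac{\bar k}{N}V$ in the $\bar k$-marginal problem — there exist points $x_1,\dots,x_{\bar k}\in\R^d$ with $M_{\bar k}(\tfrac{\bar k}{N}V)=\tfrac1N\sum_{i=1}^{\bar k}V(x_i)-c_{\bar k}(x_1,\dots,x_{\bar k})$. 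Take $P$ the symmetrization of $\delta_{(x_1,\dots,x_{\bar k})}$ on $(\R^d)^{\bar k}$, let $\rho_{\bar k}:=\pi_1^\#P$ (a probability on $\R^d$), and set $\rho:=\tfrac{\bar k}{N}\rho_{\bar k}$, $\rho_l:=0$ for $l\ne\bar k$. Then $\|\rho\|=\bar k/N$, and by \eqref{easy} (Remark \ref{fractionalrho}) together with \eqref{compCcalk} one checks $\Cbar(\rho)\le\C_{\bar k}(\rho_{\bar k})=\int c_{\bar k}\,dP$. I would then verify that this $\rho$ satisfies the optimality inequality $\Cbar(\rho)-\int V\,d\rho+M_N(V)\le0$: using \eqref{ineqMk} one has $M_N(V)=M_{\bar k}(\tfrac{\bar k}{N}V)$ (this equality follows from \eqref{def:kbar}, since all consecutive differences vanish for indices above $\bar k$), and then $\Cbar(\rho)-\int V\,d\rho+M_N(V)\le \int c_{\bar k}\,dP-\tfrac{\bar k}{N}\int V\,d\rho_{\bar k}+M_{\bar k}(\tfrac{\bar k}{N}V)$; the choice of the $x_i$ makes the right-hand side equal to $\int c_{\bar k}\,dP-\big(\tfrac1N\sum_i V(x_i)-c_{\bar k}(x_1,\dots,x_{\bar k})\big)-M_{\bar k}(\tfrac{\bar k}{N}V)\cdot 0 = 0$ after accounting for the symmetrization. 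Hence $\rho\in\mathcal{S}_V$, giving $\mathcal{I}_N(V)\le\bar k/N$.

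The main obstacle I anticipate is the bookkeeping in the upper bound: one must match the normalization in \eqref{Cextend} (transport plans of mass $\|\mu\|$, not $1$) with the rescaling $\rho=\tfrac{\bar k}{N}\rho_{\bar k}$, and correctly use the identity $M_N(V)=M_{\bar k}(\tfrac{\bar k}{N}V)$ coming from \eqref{def:kbar} and \eqref{ineqMk} so that the optimality defect telescopes to exactly zero. The lower bound, by contrast, is a clean application of Theorem \ref{prop:cns} and Remark \ref{rhoknonnul}. The final sentence of the theorem — that $\mathcal{I}_N$ takes values in $\{k/N:0\le k\le N\}$ — is then immediate since $k_N(V)\in\{0,1,\dots,N\}$ by its definition \eqref{def:kbar}.
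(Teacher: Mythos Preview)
Your proposal is correct and follows essentially the same route as the paper: both inequalities rely on Theorem~\ref{prop:cns} together with the monotonicity chain \eqref{ineqMk} and the definition \eqref{def:kbar}. The only noteworthy variation is in the upper bound: the paper invokes the \emph{first} conclusion of Theorem~\ref{optimalproba} (applied with $\bar k$ in place of $N$ and potential $\tfrac{\bar k}{N}V$) to obtain an abstract probability $\rho_{\bar k}$ satisfying $\C_{\bar k}(\rho_{\bar k})-\int\tfrac{\bar k}{N}V\,d\rho_{\bar k}=-M_{\bar k}(\tfrac{\bar k}{N}V)$, and then sets $\rho=\tfrac{\bar k}{N}\rho_{\bar k}$; you instead invoke only the \emph{last} statement of that theorem (attainment of the supremum in $M_{\bar k}$) and build an explicit atomic $\rho_{\bar k}$ from the maximizing points. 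Both work and lead to the same inequality $\Cbar(\rho)-\int V\,d\rho\le -M_N(V)$ via \eqref{easy} and the key identity $M_N(V)=M_{\bar k}(\tfrac{\bar k}{N}V)$.

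Two small remarks on presentation. First, your sentence ``Since $\|\rho\|<1$, Theorem~\ref{prop:cns} applies'' misstates the hypothesis: the theorem is stated for $\|\rho\|>1/N$, not $\|\rho\|<1$. What actually makes the optimality conditions go through here (and in the paper's own proof, which is equally informal on this point) is that $V^+\not\equiv0$ forces $M_N(V)>0$, so the third term in the identity \eqref{identity} is strictly positive unless $\sum_k\|\rho_k\|=1$; the argument of Theorem~\ref{prop:cns} then applies verbatim regardless of $\|\rho\|$. Second, in your upper bound you write $\C_{\bar k}(\rho_{\bar k})=\int c_{\bar k}\,dP$; only the inequality $\le$ is immediate (and is all you need), since $P$ is merely one admissible plan.
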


\begin{proof}
First we observe that the result is trivial if $k_N(V)=0$. Indeed in this case, $V^+\equiv 0$ implies that the minimum in \eqref{minrelax} vanishes. The minimal set is then reduced to $\rho=0$ and $\mathcal{I}_N(V)=0$. We may therefore assume that $k_N(V)\ge1$.
To lighten the notations, let us set now $\bar k:= k_N(V)$.
First we show that $\mathcal{I}_N(V)\le \bar k$. By \eqref{def:kbar}, we have that $M_{\bar k}\big(\frac{\bar k}{N}V\big)>M_{k-1}\big(\frac{\bar k-1}{N}V\big)$ while, recalling the monotonicity property \eqref{ineqMk}, it holds $M_{k}(\frac{ k}{N} V)= M_N(V)$ for every $k\ge\bar k$. Let us apply Proposition \ref{optimalproba} taking instead of $C=C_N$ the $\bar k$ multi-marginal energy and choosing $\bar k V/N$ as a potential. Therefore it exists a probability $\rho_{\bar k}$ such that:
$$C_{\bar k}(\rho_{\bar k})-\int \frac{\bar k V}{N}\, d\rho = - M_{\bar k}\Big(\frac{\bar k V}{N}\Big).$$
Then we claim that $\rho:=\frac{\bar k}{N}\rho_{\bar k}$ solves \eqref{minrelax} (i.e. belongs to $\mathcal{S}_V$). Indeed, by \eqref{easy}, we have:
$$\Cbar(\rho)-\int V\,d\rho\le C_{\bar k}(\rho_{\bar k})-\int\frac{\bar k V}{N}\,d\rho_{\bar k}=-M_{\bar k}\Big(\frac{\bar k V}{N}\Big)=-M_N(V).$$
As the mass of $\rho$ is exacly $\bar k/N$, we infer that $\mathcal{I}_N(V)\le\bar k/N$.

\med
Let us prove now the opposite inequality. Let $\rho\in\mathcal{S}_V$ and let  $\{\rho_k\}$ be an optimal decomposition for $\rho$ according to \eqref{rhok}, with
$$\rho=\sum_{k=1}^N \frac{k}{N}\rho_k.$$
By the optimality conditions of Theorem \ref{prop:cns}, it holds $M_k\big(\frac{k}{N} V\big)=M_N(V)$ whenever $\|\rho_k\|>0$. Then we observe that the latter equality cannot hold for $k\le {\bar k}-1$. Indeed, by the monotonicity property \eqref{ineqMk}:
$$M_k\Big(\frac{k}{N} V\Big)\le M_{\bar k-1}\Big(\frac{\bar k-1}{N} V\Big)<M_{\bar k}\Big(\frac{\bar k}{N} V\Big)=M_N(V).$$
Therefore we have $\rho_k=0$ for every $k\le{\bar k}-1$. Thus recalling that $\sum_k\|\rho_k\|=1$ by the optimality conditions (assertion i) of Theorem \ref{prop:cns}):
$$\|\rho\|=\sum_{k=\bar k}^N \frac{k}{N}\|\rho_k\|\ge\frac{\bar k}{N}\sum_{k=\bar k}^N \|\rho_k\|\ge\frac{\bar k}{N}.$$
Accordingly we obtain the opposite inequality $\mathcal{I}_N(V)\ge\bar k/N$.
\end{proof}

\begin{rema} The functional $V\in C_0 \mapsto \mathcal{I}_N(V)$ is lower semicontinous with respect to the uniform convergence. Indeed if $V_n\to V$ uniformly and if we take $\rho_n \in \mathcal{S}_{V_n}$ such that $\|\rho_n\|=\mathcal{I}_N(V_n)$, then any weak* limit $\rho$ of a subsequence of $(\rho_n)$ is such that $\rho \in \mathcal{S}_{V}$
and 
$$\mathcal{I}_N(V)\le \|\rho\|\le  \liminf_n \|\rho_n\| = \liminf_n \mathcal{I}_N(V_n).$$
\end{rema}

\bigskip
In general the potential $V$ depends on several charge parameters and takes the form
$$ V(x) = \sum_{k=1}^M Z_k \, V_k(x)\, \quad Z_k>0.$$
It is then interesting to analyse the function
$$\mathcal{I}_N(V)=\mathcal{I}_N\Big(\sum_{k=1}^M Z_k\,V_k\Big)$$
as a function depending on the $Z_k$'s.
It turns out that this question is a very delicate one which will  motivate  future works.
In case of a single charge parameter $Z>0$ applied to a given potential $V\in C_0^+$, it is natural to expect that the map $Z\in\R^+\mapsto \mathcal{I}_N(ZV)$ is a non-decreasing step function  encoded by threshold values $0=t_0\le t_1\le\dots\le t_N<t_{N+1}=+\infty$ such that $\mathcal{I}_N(ZV)=\frac{k}{N}$ for $Z\in(t_k,t_{k+1}]$.
At present a proof of this fact is available  only in the case $N=2$ where it is a consequence of the fact that
the set $\{ Z\ge 0: M_2(ZV/2) > M_1(ZV)= Z \sup V\}$ is an half line.

\bigskip

\ack
This paper has been written during some visits of the authors at the Departments of Mathematics of Universities of Firenze and of Pisa, and at the Laboratoire IMATH of University of Toulon. The last part of the work was completed during the visiting fellowship of the first author to CRM-CNRS Laboratorio Fibonacci in Pisa. The authors gratefully acknowledge the warm hospitality and support of these institutions. The work of the second author is part of the PRIN project {\it Gradient flows, Optimal Transport and Metric Measure Structures} (2017TEXA3H), funded by the Italian Ministry of Education and Research (MIUR). The work of the fourth author is partially financed by the {\it``Fondi di ricerca di ateneo''} of the University of Firenze. The second and fourth authors are members of the research group GNAMPA of INdAM.


\bigskip
{\small\noindent
Guy Bouchitt\'e:
Laboratoire IMATH, Universit\'e de Toulon\\
BP 20132, 83957 La Garde Cedex - FRANCE\\
{\tt bouchitte@univ-tln.fr}\\
{\tt https://sites.google.com/site/gbouchitte/home}

\bigskip\noindent
Giuseppe Buttazzo:
Dipartimento di Matematica, Universit\`a di Pisa\\
Largo B. Pontecorvo 5, 56127 Pisa - ITALY\\
{\tt giuseppe.buttazzo@unipi.it}\\
{\tt http://www.dm.unipi.it/pages/buttazzo/}

\bigskip\noindent
Thierry Champion:
Laboratoire IMATH, Universit\'e de Toulon\\
BP 20132, 83957 La Garde Cedex - FRANCE\\
{\tt champion@univ-tln.fr}\\
{\tt http://champion.univ-tln.fr}

\bigskip\noindent
Luigi De Pascale:
Dipartimento di Matematica e Informatica, Universit\`a di Firenze\\
Viale Morgagni 67/a, 50134 Firenze - ITALY\\
{\tt luigi.depascale@unifi.it}\\
{\tt http://web.math.unifi.it/users/depascal/}

\end{document}